    \newtheorem{thm}{Theorem}[section]
    \newtheorem{prop}[thm]{Proposition}
    \newtheorem*{Proof*}{Proof}
   \newtheorem{lemma}[thm]{Lemma}
   \newtheorem{Note}[thm]{Note}
    \newtheorem{subsec}[thm]{}
\theoremstyle{Definition}
    \newtheorem{Def}[thm]{Definition}
        \newtheorem{Rem}[thm]{Remark}
    \newtheorem{Exam}[thm]{Example}
\newtheorem{Coro}[thm]{Corollary}
\theoremstyle{remark}
\tikzset{
  curve/.style={
    settings={#1},
    to path={
      (\tikztostart)
      .. controls ($(\tikztostart)!\pv{pos}!(\tikztotarget)!\pv{height}!270:(\tikztotarget)$)
      and ($(\tikztostart)!1-\pv{pos}!(\tikztotarget)!\pv{height}!270:(\tikztotarget)$)
      .. (\tikztotarget)\tikztonodes
    },
  },
  settings/.code={%
    \tikzset{quiver/.cd,#1}%
    \def\pv##1{\pgfkeysvalueof{/tikz/quiver/##1}}%
  },
  quiver/.cd,
  pos/.initial=0.35,
  height/.initial=0,
}
\date{}
\begin{document}
\renewcommand{\baselinestretch}{1.2}
\renewcommand{\arraystretch}{1.0}
\title{\bf On Hom-Analogues of Heaps and Trusses}
\date{}
\author{{\bf Tarik Anowar$^{1}$,~Ripan Saha$^{2}$\footnote{ Corresponding author:~~Email: ripanjumaths@gmail.com}},~~\bf Sayan Thokdar$^{3}$
        \\
{\small 1.  Department of Mathematics, Raiganj University, Raiganj 733134, West Bengal, India}\\
{\small 2. Department of Mathematics, Raiganj University, Raiganj 733134, West Bengal, India}\\
{\small 3. Department of Mathematics, Indian Institute of Science Education and Research (IISER-Pune)},\\ {\small Pune 411008,  Maharashtra, India.}}

 \maketitle
\begin{center}
\begin{minipage}{12.cm}
\begin{center}{\bf ABSTRACT}\end{center}

This paper introduces Hom-heaps, Hom-trusses, and Hom-braces as Hom-type analogues of their classical counterparts. We establish the correspondence between Hom-heaps and Hom-groups by showing that the retract of a Hom-heap at a point forms a Hom-group precisely when the point is fixed by the twisting map, and prove that translation maps induce isomorphisms between Hom-group retracts at different fixed base points. We introduce three equivalent notions of Hom-trusses and investigate their structural properties. We also propose three variants of Hom-braces and establish their correspondence with Hom-trusses, showing that certain Hom-trusses naturally give rise to Hom-braces and conversely. These results provide a unified framework extending heap and truss theory to the Hom-algebraic setting, with potential applications to the Yang--Baxter equation and non-associative geometry.

\medskip

{\bf Key words}: Hom-heap, Hom-truss, Hom-brace, Heap, Truss, Brace.
\medskip

 {\bf Mathematics Subject Classification (2020):} 16Y99, 17A99, 08A99
\end{minipage}
\end{center}
\normalsize\vskip0.5cm

\section{Introduction}

Heaps were first studied over a century ago by Prüfer \cite{Pru} and Baer \cite{Bae}, 
though they attracted relatively little attention for many years (see, e.g., \cite{Cer}). 
Defined as algebraic structures equipped with a ternary operation, heaps appear naturally 
in loop theory, symmetric spaces, and non-associative algebra. Every group $(G,\cdot)$ 
yields a heap via $\langle a,b,c\rangle = a \cdot b^{-1} \cdot c$, and conversely, every 
non-empty heap with a distinguished element $e$ recovers a group as its \emph{retract}.
This deep connection with group theory makes heaps a versatile tool for exploring 
generalizations of algebraic structures.

The importance of heaps has resurfaced in recent years through the work of Brzeziński and his collaborators, 
who introduced the notion of \emph{trusses} \cite{Br}. A truss enriches a heap with 
a multiplication compatible with the ternary operation via a distributive law, thus 
providing a unifying framework that encompasses groups, rings, braces, and related systems. 
This perspective has proved useful in the study of set-theoretic solutions of the 
Yang--Baxter equation, as well as in ring theory and category theory. Brzeziński and his collaborators \cite{BBRS} also 
reformulated affine spaces in terms of abelian heaps and developed the affinization of 
algebraic structures such as associative, Lie \cite{BBR}, and Leibniz algebras \cite{BRP}, thereby opening a 
new direction in the study of non-associative geometry.

Braces were first introduced by Wolfgang Rump \cite{Rump} in 2007 as an algebraic structure connecting set-theoretic solutions of the Yang–Baxter equation with radical rings. Since then, braces have become a central tool in the study of non-degenerate involutive set-theoretic solutions \cite{Ced}, providing a unifying framework that links group theory, ring theory, and Hopf algebra techniques. The motivation for introducing braces lies in their ability to translate the combinatorial problem of constructing solutions to the Yang–Baxter equation into an algebraic setting, where the rich interplay between additive and multiplicative structures yields powerful classification and construction methods. This approach not only generalizes earlier results on radical rings but also opens pathways to new applications in quantum groups and related areas.

Hom-algebraic structures arise by twisting classical identities with a distinguished self-map (or “hom” map), so familiar axioms hold only up to that twist. This simple modification produces a far richer class of examples — including q-deformations, endomorphism-twisted algebras and discrete/time-evolution variants — and forces new cohomologies, representation theories and deformation phenomena to appear. Introducing Hom-heaps and Hom-braces lets us carry the powerful heap/brace machinery into this twisted setting: it both generalizes classical classification and extension results and creates avenues to construct new (twisted) set-theoretic solutions of Yang–Baxter–type problems.

\emph{Hom-type algebras} were introduced by Hartwig, Larsson, and Silvestrov \cite{Har} in the context of $q$-deformations. These structures 
generalize classical algebras by twisting defining identities with a self-map~$\alpha$, 
giving rise to notions such as Hom-associativity and Hom-Lie brackets. Since their 
introduction, Hom-algebraic structures have been studied extensively, leading to the 
development of Hom-groups \cite{Has, Lau}, Hom-rings \cite{Ime}, Hom-Lie algebras \cite{Jia,Lau, Ma, Makh, Fre}, and beyond. A Hom-group is a non-associative generalization of a group, where associativity and unitality are twisted by a compatible self-map. Recently, Hom-groups have attracted considerable attention in mathematical research \cite{Has, Hass, Che}.

The aim of this paper is to extend these ideas into the affine and distributive setting 
by introducing and studying \emph{Hom-heaps}, \emph{Hom-trusses}, and \emph{Hom-braces} 
as natural Hom-analogues of their classical counterparts. In the Hom setting, we show 
that the retract of a Hom-heap at a point $x$ forms a Hom-group precisely when $x$ is a 
fixed point of $\alpha$. We also show that from a Hom-group one can define a Hom-heap by twisting the classical approach of the heap setting, thereby generalizing the classical correspondence between heaps and groups. We also investigate translation maps in Hom-heaps and show that they induce isomorphisms between the Hom-group retracts associated with different base points, thereby extending the corresponding classical result for heaps. We also study Hom-subheaps and normal Hom-subheaps, obtaining several interesting results. We further introduce three interrelated definitions of Hom-trusses, motivated by analogous constructions in Hom-ring theory, and establish equivalences between them. For Hom-braces, we present three types of structures and establish connections with Hom-trusses; in 
particular, we prove that every Hom-truss of type $(0)$ in which the multiplicative part 
forms a Hom-group gives rise to a Hom-brace of type $(0)$, and conversely.

The paper therefore lays the foundations for a systematic study of Hom-type structures 
within the affine and distributive framework. By combining heaps, trusses, and Hom-type 
twistings, our work not only generalizes existing algebraic systems but also opens new 
directions for applications, including Yang--Baxter theory, categorical analysis, and 
non-associative geometry.

\section{Preliminaries}

In this section, we recall the fundamental notions of Hom-groups, heaps, and braces that will be used throughout the paper. For further details, we refer the reader to \cite{Bae, Cer, Rum, Rump, BBRS, Che}.

\begin{Def}\cite{Pru,Brz}
A \emph{heap} is a set $H$ equipped with a ternary operation 
\[
\langle -,-,- \rangle : H \times H \times H \longrightarrow H, \quad (a,b,c) \mapsto \langle a,b,c \rangle,
\]
satisfying, for all $a,b,c,d,e \in H$:
\begin{enumerate}
\item $\langle \langle a,b,c \rangle, d, e \rangle = \langle a,b, \langle c,d,e \rangle \rangle$ \quad (associativity),
\item $\langle a,a,b \rangle = b = \langle b,a,a \rangle$ \quad (Mal'cev identity).
\end{enumerate}
\end{Def}

A heap is called \emph{abelian} if $\langle a,b,c\rangle = \langle c,b,a\rangle$ for all $a,b,c \in H$.

\begin{Exam}
Let $(G,\cdot)$ be an abelian group. Then $G$ becomes an abelian heap with the ternary operation 
\[
\langle a,b,c\rangle = a\cdot b^{-1}\cdot c, \quad \text{for all } a,b,c \in G.
\]
\end{Exam}

\begin{Rem}\label{re1}
Every group is a heap under the operation defined in the above example.  
However, the converse is in general not true; for instance, if $H$ is the empty set, then it cannot form a group.  
For any non-empty heap $H$ and an element $e \in H$, the set $H$ endowed with the binary operation  
\[
a +_{e} b = \langle a, e, b \rangle
\]  
is a group (abelian if $H$ is abelian), known as the \emph{retract} of $H$ at $e$, denoted by $G(H,e)$.  
Moreover, the inverse of $a \in H$ in $G(H,e)$ is given by  
\[
a^{-1} = \langle e, a, e \rangle.
\]
\end{Rem}

A heap morphism from $(H,\langle -,-,- \rangle_{1})$ to $(\bar{H},\langle -,-,- \rangle_{2})$ is a map $f:H\to \bar{H}$ such that $f(\langle a,b,c\rangle_{1})=\langle f(a),f(b),f(c)\rangle_{2}$ for all $a,b,c\in H$.

\begin{Def}[\cite{Hass}]
A regular \emph{Hom-group} consists of a set $G$ together with a distinguished element $e \in G$, a bijective map $\alpha : G \to G$, and a binary operation $\bullet : G \times G \to G$, such that the following conditions hold:
\begin{enumerate}
    \item[(i)] For all $a,b,c \in G$,  $\alpha(a) \bullet (b \bullet c) = (a \bullet b) \bullet \alpha(c).$
   
    \item[(ii)] For all $a,b \in G$, $\alpha(a \bullet b) = \alpha(a) \bullet \alpha(b).$
    
    \item[(iii)] The distinguished element $e$ is called the \emph{unit} and satisfies
    
        $$a \bullet e = e \bullet a = \alpha(a),$$
   
    \item[(iv)] For every $a \in G$, there exists an element $a^{-1} \in G$ such that 
    \[
        a \bullet a^{-1} = a^{-1} \bullet a = e.
    \]
\end{enumerate}
\end{Def}

\begin{Rem}
There exists a more general definition of a Hom-group \cite{Lau}, where the map $\alpha$ is not required to be bijective and the inverse condition is also twisted. In this paper, however, we restrict ourselves to the case where $\alpha$ is bijective, and hence we work with \emph{regular Hom-groups}. Throughout the paper, by a Hom-group we shall always mean a regular Hom-group.
\end{Rem}

Let $(G,\bullet)$ be a group and let $\alpha : G \to G$ be a group automorphism. Define a new binary operation 
\[
    a \bullet_{\alpha} b := \alpha(a \bullet b), \quad \text{for all } a,b \in G.
\]
Then the triple $(G, \bullet_{\alpha}, \alpha)$ forms a Hom-group.

\begin{prop}
Let $(G,\bullet,\alpha)$ be a Hom-group and $e$ is an identity element. Then 
\item[(i)]\cite{Jia} $\alpha(e) = e$
\item[(ii)]\cite{Has} For each $a\in G$, $\alpha(a)^{-1}=\alpha(a^{-1})$.
\end{prop}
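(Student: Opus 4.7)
For (i), my plan is to derive an equation of the form $\alpha(e)\bullet b = e\bullet b$ for all $b\in G$, and then specialize $b$ to $e$ to force $\alpha(e)=e$ via the bijectivity of $\alpha$. Concretely, I would start from the unit axiom $e\bullet a=\alpha(a)$, apply $\alpha$ to both sides, and use the multiplicativity axiom (ii) to rewrite $\alpha(e\bullet a)=\alpha(e)\bullet\alpha(a)$. This yields $\alpha(e)\bullet\alpha(a)=\alpha^{2}(a)$. On the other hand the unit axiom applied directly to $\alpha(a)$ gives $e\bullet\alpha(a)=\alpha^{2}(a)$. Equating the two expressions and using bijectivity of $\alpha$ (so that $\alpha(a)$ runs over all of $G$), I obtain $\alpha(e)\bullet b=e\bullet b$ for every $b\in G$. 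Taking $b=e$ then gives $\alpha^{2}(e)=\alpha(e)$, and since $\alpha$ is bijective, $\alpha(e)=e$.

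For (ii), the strategy is to show that $\alpha(a^{-1})$ is a two-sided inverse of $\alpha(a)$ and then invoke uniqueness of inverses. Applying $\alpha$ to $a\bullet a^{-1}=e$ and $a^{-1}\bullet a=e$, and using the multiplicativity of $\alpha$ together with part (i) ($\alpha(e)=e$), I get
\[
\alpha(a)\bullet\alpha(a^{-1})=\alpha(a^{-1})\bullet\alpha(a)=e.
\]
Thus $\alpha(a^{-1})$ satisfies the defining property of $\alpha(a)^{-1}$. To conclude $\alpha(a^{-1})=\alpha(a)^{-1}$ I still need uniqueness of inverses in a Hom-group; I expect this to be the one subtle point, since uniqueness here is not automatic but follows from Hom-associativity. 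If $b$ and $c$ are both inverses of an element $x$, the Hom-associativity identity $\alpha(b)\bullet(x\bullet c)=(b\bullet x)\bullet\alpha(c)$ collapses to $\alpha(b)\bullet e=e\bullet\alpha(c)$, i.e.\ $\alpha^{2}(b)=\alpha^{2}(c)$, and bijectivity of $\alpha$ forces $b=c$.

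The main obstacle, therefore, is not the computation itself but justifying the uniqueness of inverses cleanly, which is what legitimizes the identification in (ii). Once uniqueness is in hand, both parts amount to short manipulations using only the four Hom-group axioms plus the bijectivity of $\alpha$.
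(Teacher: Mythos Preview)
Your argument is correct. Both parts follow cleanly from the Hom-group axioms: in (i) the chain $\alpha(e)\bullet b=e\bullet b$ for all $b$, specialised to $b=e$, gives $\alpha^{2}(e)=\alpha(e)$ and hence $\alpha(e)=e$ by bijectivity; in (ii) multiplicativity together with (i) shows $\alpha(a^{-1})$ is a two-sided inverse of $\alpha(a)$, and your uniqueness-of-inverses argument via Hom-associativity is valid.

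There is nothing to compare against: the paper does not supply its own proof of this proposition but simply records it with citations to \cite{Jia} and \cite{Has}. Your self-contained derivation, including the explicit verification of uniqueness of inverses (which is indeed the only nontrivial step and is not automatic in the Hom-setting), is a perfectly adequate replacement for those references.
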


Let $(G,\bullet,\alpha)$ and $(H,\star,\beta)$ be Hom-groups.  
A map $f: G \rightarrow H$ is called a \emph{homomorphism of Hom-groups} if it satisfies the following conditions: 
\begin{enumerate}
\item[(1)] For all $a,b \in G$, 
$f(a \bullet b) = f(a) \star f(b)$.
\item[(2)] For all $a \in G$, $\beta(f(a)) = f(\alpha(a))$.

\end{enumerate}
Moreover, if $f:G \rightarrow H$ is bijective, then $f$ is called an \emph{isomorphism}.

\begin{Def}\cite{Fre}
Let $R$ be a set together with two binary operations $+:R\times R\rightarrow R$ and $\bullet:R\times R\rightarrow R$, one bijective map $\alpha:R\rightarrow R$ and a special element    $0\in R$ . Then $(R,+,\bullet,0, \alpha)$ is called a Hom-ring if:
\begin{enumerate}
\item[(i)]$(R,+,0)$ is an abelian group.
\item[(ii)] The multiplication is distributive on both sides.
\item[(iii)] $\alpha$ is an abelian group homomorphism.
\item[(iv)] $\alpha$ and $\bullet$ satisfy the Hom-associativity condition; 
$$\alpha(a)\bullet(b\bullet c)=(a\bullet b)\bullet \alpha(c).$$
\end{enumerate}
\end{Def}

A Hom-ring $(R,+,\bullet, 0, \alpha)$ is \emph{unitary} if it has $1\in R$ with  
$a\bullet 1=1\bullet a= a$ for all $a\in R$, and $\alpha(1)=1$.

\begin{Rem}
There are two alternative definitions of a Hom-ring given in \cite{Ime}. In this paper, we introduce the truss versions of these definitions and use them further to establish connections with other algebraic structures.
\end{Rem}

\begin{Def}\cite{Br}
A \emph{truss} is an algebraic system $(T,\langle -,-,- \rangle,\bullet)$ consisting of a ternary operation 
$\langle -,-,- \rangle:T\times T\times T \to T$ such that $(T,\langle -,-,- \rangle)$ is an abelian heap, 
together with an associative binary operation $\bullet:T\times T \to T$ which distributes over 
$\langle -,-,- \rangle$. That is, for all $a,b,c,d \in T$,
\[
a\bullet \langle b,c,d\rangle=\langle a\bullet b,\,a\bullet c,\,a\bullet d\rangle,
\]
and
\[
\langle a,b,c\rangle \bullet d=\langle a\bullet d,\,b\bullet d,\,c\bullet d\rangle.
\]
\end{Def}
\begin{Exam}
Let $H$ be an abelian heap and $E(H)=\{f:H\to H\mid f \text{ is a heap homomorphism}\}$. For $f,g,h\in E(H)$ and $a\in H$, define 
\[
[f,g,h](a)=\langle f(a),g(a),h(a)\rangle.
\] 
With multiplication given by composition, $E(H)$ is a truss.
\end{Exam}

Let $(T_{1},\langle-,-,-\rangle,\bullet)$ and $(T_{2},\langle-,-,-\rangle,\bullet)$ be two trusses.  
A function $\alpha:T_{1}\to T_{2}$ is called a \emph{truss homomorphism} if it is both a heap homomorphism with respect to $\langle-,-,-\rangle$ and a semigroup homomorphism with respect to $\bullet$.

\begin{Def}\cite{Rum}
A set $B$ with binary operations $\star$ and $\bullet$ is called a \emph{brace} if, for all $a,b,c \in B$,
\begin{enumerate}
\item[(i)] $(B,\star)$ is an abelian group;
\item[(ii)] $(B,\bullet)$ is a group;
\item[(iii)] 
\[
a \bullet (b \star c) = (a \bullet b) \star a^{\star} \star (a \bullet c), 
\qquad
(a \star b) \bullet c = (a \bullet c) \star c^{\star} \star (b \bullet c),
\]
\end{enumerate}
where $a^{\star}$ and $c^{\star}$ denote the inverses of $a$ and $c$, respectively, with respect to $\star$.
\end{Def}

A truss $(T,\langle -,-,- \rangle,\bullet)$ is called \emph{unital} if $(T,\bullet)$ is a monoid with identity element $1$.

An element $0$ of a truss $(T,\langle -,-,- \rangle,\bullet)$ is called an \emph{absorber} if, for all $a\in T$,
\[
a\bullet 0 = 0 = 0\bullet a.
\]
Note that an absorber, if it exists, is unique.

\section{Hom-Heap}
In this section, we introduce the notion of a Hom-heap and provide some examples. We also discuss the relation between Hom-groups and Hom-heaps.

\begin{Def}
A \emph{Hom-heap} is a set $H$ together with a bijective map $\alpha:H \to H$ and a ternary operation 
\[
\langle -,-,- \rangle : H \times H \times H \to H,
\]
such that the following axioms are satisfied for all $e,f,x,y,z \in H$:
\begin{enumerate}
\item[(i)] \textbf{Hom-Mal'cev property:}
\[
\langle x,x,y\rangle = \alpha(y) = \langle y,x,x\rangle.
\]
\item[(ii)] \textbf{Hom-associativity property:}
\[
\langle \alpha(e),\alpha(f),\langle x,y,z\rangle \rangle 
= \langle \langle e,f,x\rangle,\alpha(y),\alpha(z)\rangle.
\]
\item[(iii)] \textbf{Compatibility with $\alpha$:}
\[
\alpha\langle x,y,z\rangle = \langle \alpha(x),\alpha(y),\alpha(z)\rangle.
\]
\end{enumerate}
\end{Def}
\begin{Def}
A Hom-heap $(H, \langle -,-,- \rangle, \alpha)$ is called \textbf{involutive} if $\alpha^2 = id$.
\end{Def}

\begin{Exam}
The additive group of integers $(\mathbb{Z}, +)$ admits a Hom-heap structure with  
$\langle -,-,- \rangle : \mathbb{Z} \times \mathbb{Z} \times \mathbb{Z} \to \mathbb{Z}$ and $\alpha : \mathbb{Z} \to \mathbb{Z}$ defined by  
\[
\langle a, b, c \rangle = -(a - b + c), \quad \alpha(a) = -a.
\]
Easy to verify that this is an example of an involutive Hom-heap.
\end{Exam}
\begin{Exam}
Every heap $(H,\langle -,-,- \rangle)$ together with a heap automorphism $\alpha:H\to H$ admits a Hom-heap structure 
\[
(H,\langle -,-,- \rangle_{\alpha},\alpha),
\]
where $\langle x,y,z\rangle_{\alpha} = \alpha(\langle x,y,z\rangle)$.  
In particular, every heap can be regarded as a Hom-heap by taking $\alpha$ to be the identity map.  
Thus, the notion of a Hom-heap generalizes the classical notion of a heap.
\end{Exam}

\begin{Exam}
The structure $(\mathbb{R},+,0,\mathrm{Id})$ is a Hom-group.  
Define the ternary map $\langle -,-,- \rangle : \mathbb{R} \times \mathbb{R} \times \mathbb{R} \to \mathbb{R}$ by  
\[
\langle a,b,c \rangle = a - b + c.
\]  
Then $(\mathbb{R},\langle -,-,- \rangle,\mathrm{Id})$ is a Hom-heap.  
\end{Exam}

\begin{Exam}
Define a ternary operation $\langle -,-,- \rangle$ on $\mathbb{R}$ by  
\[
\langle a,b,c \rangle = \frac{a - b + c}{n}, \quad n \geq 2.
\]  
This operation is not associative in the ternary sense; hence $(\mathbb{R},\langle -,-,- \rangle)$ is not a heap.  
We show that $(\mathbb{R},\langle -,-,- \rangle,\alpha)$ is a Hom-heap, where $\alpha: \mathbb{R} \to \mathbb{R}$ is defined by  
\[
\alpha(a) = \frac{a}{n}.
\]  
It is straightforward to verify that $\alpha$ is a bijection on $\mathbb{R}$. For all $a,b,c \in \mathbb{R}$,  
\[
\alpha\langle a,b,c \rangle = \alpha\left( \frac{a - b + c}{n} \right) = \frac{a - b + c}{n^2},
\]  
\[
\langle \alpha(a),\alpha(b),\alpha(c) \rangle = \left\langle \frac{a}{n},\frac{b}{n},\frac{c}{n} \right\rangle = \frac{a - b + c}{n^2}.
\]  
Thus, $\alpha$ is compatible with $\langle -,-,- \rangle$.  

Moreover, for all $a,b \in \mathbb{R}$,  
\[
\langle a,a,b \rangle = \langle b,a,a \rangle = \frac{a-a+b}{n} = \frac{b}{n} = \alpha(b),
\]  
so the Hom-Mal'cev identity holds.  

Finally, for all $a,b,c,e,f \in \mathbb{R}$,  
\[
\langle \alpha(e),\alpha(f),\langle a,b,c \rangle \rangle 
= \left\langle \frac{e}{n},\frac{f}{n},\frac{a-b+c}{n} \right\rangle 
= \frac{e - f + (a - b + c)}{n^2},
\]  
\[
\langle \langle e,f,a \rangle,\alpha(b),\alpha(c) \rangle 
= \left\langle \frac{e - f + a}{n},\frac{b}{n},\frac{c}{n} \right\rangle 
= \frac{(e - f + a) - b + c}{n^2}.
\]  
Since addition in $\mathbb{R}$ is associative, the Hom-associativity holds.  
Hence $(\mathbb{R},\langle -,-,- \rangle,\alpha)$ is a Hom-heap.
\end{Exam}

\begin{Rem}
The above example defines a Hom-heap, but not a heap, under the given ternary operation.

From Remark~\ref{re1}, we know that for any non-empty heap $H$ and any element $x \in H$, one can construct a binary operation by retracting at $x$, and under this operation $H$ forms a group.  
This naturally raises the question: is it possible to obtain a Hom-group by retracting at an element $x$ in a Hom-heap?

The answer is, in general, \emph{no}. For instance, in the example above we have  
\[
\alpha(x) = \tfrac{x}{n} \neq x.
\]  
However, in a Hom-group the identity element $x$ must satisfy the condition $\alpha(x) = x$.  
Thus, the retract construction that works for heaps does not directly yield a Hom-group in the Hom-setting.

In general, one can obtain a Hom-group from a Hom-heap by retracting at a point $x \in H$ \emph{if and only if} $\alpha(x) = x$.  
In this case, $x$ serves as the identity element in the resulting Hom-group.
\end{Rem}
\begin{prop}\label{Prop1}
Let $(H, \langle -,-,- \rangle, \alpha)$ be a Hom-heap. For all $a,b,c,d,e \in H$, we have
\[
\langle a, b, \langle \alpha^{-1}(c), \alpha^{-1}(d), \alpha^{-1}(e) \rangle \rangle 
= \langle \langle \alpha^{-1}(a), \alpha^{-1}(b), \alpha^{-1}(c) \rangle, d, e \rangle.
\]
\end{prop}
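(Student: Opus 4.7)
The plan is to obtain the identity as a direct consequence of the Hom-associativity axiom by making a substitution that cancels the outer $\alpha$'s. The Hom-associativity condition states
\[
\langle \alpha(p),\alpha(q),\langle r,s,t\rangle\rangle = \langle \langle p,q,r\rangle,\alpha(s),\alpha(t)\rangle
\]
for all $p,q,r,s,t \in H$ (I am relabeling the variables $e,f$ in the axiom to $p,q$ to avoid clash with the $e$ appearing in the statement). Since $\alpha$ is bijective by definition, we may freely substitute $p = \alpha^{-1}(a)$, $q = \alpha^{-1}(b)$, $r = \alpha^{-1}(c)$, $s = \alpha^{-1}(d)$, $t = \alpha^{-1}(e)$.

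The left-hand side of the Hom-associativity identity then becomes $\langle \alpha(\alpha^{-1}(a)),\alpha(\alpha^{-1}(b)),\langle \alpha^{-1}(c),\alpha^{-1}(d),\alpha^{-1}(e)\rangle\rangle$, which collapses to $\langle a,b,\langle \alpha^{-1}(c),\alpha^{-1}(d),\alpha^{-1}(e)\rangle\rangle$, the desired left-hand side of the proposition. Similarly, the right-hand side of the axiom becomes $\langle \langle \alpha^{-1}(a),\alpha^{-1}(b),\alpha^{-1}(c)\rangle,\alpha(\alpha^{-1}(d)),\alpha(\alpha^{-1}(e))\rangle$, which simplifies to $\langle \langle \alpha^{-1}(a),\alpha^{-1}(b),\alpha^{-1}(c)\rangle,d,e\rangle$.

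Thus the two sides of the claimed identity are exactly the two sides of Hom-associativity after this substitution. There is no real obstacle here: the only subtlety is the minor bookkeeping required to see that Hom-associativity is really a statement that can be rewritten in the ``classical'' associative form once two of the arguments are pre-twisted by $\alpha^{-1}$. Because $\alpha$ is a bijection on $H$, the substitution is always legitimate, and no further appeal to the Hom-Mal'cev identity or to compatibility of $\alpha$ with the ternary operation is needed.
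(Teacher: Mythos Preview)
Your proof is correct and is essentially identical to the paper's: both rewrite $a=\alpha(\alpha^{-1}(a))$, $b=\alpha(\alpha^{-1}(b))$ (and implicitly $d,e$ likewise) and then read off the desired identity as an instance of the Hom-associativity axiom, using only the bijectivity of $\alpha$.
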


\begin{proof}
Since $\alpha$ is bijective, we have $\alpha(\alpha^{-1}(x)) = x$ for all $x \in H$.  
Using the bijectivity of $\alpha$ and the Hom-associativity property, we obtain
\begin{align*}
&\langle a, b, \langle \alpha^{-1}(c), \alpha^{-1}(d), \alpha^{-1}(e) \rangle \rangle \\
&= \langle \alpha(\alpha^{-1}(a)), \alpha(\alpha^{-1}(b)), \langle \alpha^{-1}(c), \alpha^{-1}(d), \alpha^{-1}(e) \rangle \rangle \\
&= \langle \langle \alpha^{-1}(a), \alpha^{-1}(b), \alpha^{-1}(c) \rangle, d, e \rangle.
\end{align*}
\end{proof}
\begin{thm}\label{hinv}
Let $(H,\langle -,-,-\rangle,\alpha)$ be a Hom-heap and $e\in H$. If $\alpha(e)=e$, then the retract of $H$ at $e$ is a Hom-group $(H,\bullet_{e},\alpha)$ with $a\bullet_{e}b:=\langle a,e,b\rangle$ for all $a,b\in H$.
\end{thm}
\begin{proof}
Let $(H,\langle-,-,-\rangle, \alpha)$ be Hom-heap and $\alpha(e)=e$, for $e\in H$. Since $\alpha$ is bijective, so $\alpha^{-1}(e)=e$. We show that $(H,\bullet_{e},\alpha)$ is Hom-group.
\begin{enumerate}
\item[(1)]For any $a,b,c\in H$, we have
\begin{align*}
\alpha(a)\bullet_{e}(b\bullet_{e}c)&=\alpha(a)\bullet_{e}\langle b,e,c\rangle\\
&=\langle\alpha(a),e,\langle b,e,c\rangle\rangle\\
&=\langle\alpha(a),\alpha(e),\langle b,e,c\rangle\rangle\\
&=\langle\langle a,e,b\rangle,\alpha(e),\alpha(c)\rangle\\
&=\langle(a\bullet_{e}b),e,\alpha(c)\rangle\\
&=(a\bullet_{e}b)\bullet_{e}\alpha(c).
\end{align*}
Therefore, the Hom-associativity hold. 
\item[(2)] For any $a,b\in H$, observe that
\begin{align*}
\alpha(a\bullet_{e}b)&=\alpha\langle a,e,b\rangle\\
&=\langle\alpha(a),\alpha(e),\alpha(b)\rangle\\
&=\langle\alpha(a),e,\alpha(b)\rangle\\
&=\alpha(a)\bullet_{e}\alpha(b).
\end{align*}
Therefore, the map $\alpha$ is multiplicative.
\item[(3)]For any $a\in H$, we get $a\bullet_{e}e=\langle a,e,e\rangle=\alpha(a)$ and $e\bullet_{e}a=\langle e,e,a\rangle=\alpha(a)$. The element $e$ is called unit for $\bullet_{e}$ and it satisfies the Hom-unitary conditions.
\item[(4)]For any $a\in H$, its inverse with respect to the operation $\bullet_{e}$ is given by $a^{-1}=\langle e,\alpha^{-1}(a),e\rangle$, such that 
\begin{align*}
a\bullet_{e}a^{-1}&=\langle a,e,a^{-1}\rangle\\
&=\langle a,e,\langle e,\alpha^{-1}(a),e\rangle\rangle\\
&=\langle a,e,\langle\alpha^{-1}(e),\alpha^{-1}(a),\alpha^{-1}(e)\rangle\rangle\\
&=\langle\langle\alpha^{-1}(a),\alpha^{-1}(e),\alpha^{-1}(e)\rangle,a,e\rangle~~~~\text{(By~Proposition ~~ \ref{Prop1})}\\
&=\langle\alpha(\alpha^{-1}(a)),a,e\rangle\\
&=\alpha(e)\\
&=e.
\end{align*}
Similarly, we can show that $a^{-1}\bullet_{e}a=e$. This proves our result.
\end{enumerate}
\end{proof} 
\begin{thm}
Let $(H, \bullet, 1, \alpha)$ be a Hom-group. Define a ternary operation $\langle -,-,- \rangle : H \times H \times H \to H$ by
\[
\langle a, b, c \rangle = a \bullet(\alpha^{-1}( b^{-1}) \bullet\alpha^{-1} (c)).
\]
Then $(H, \langle -,-,- \rangle, \alpha)$ is a Hom-heap.
\end{thm}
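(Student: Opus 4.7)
The plan is to verify the three Hom-heap axioms directly, writing $\phi(a,b,c) := a\bullet(\alpha^{-1}(b^{-1})\bullet\alpha^{-1}(c))$ and using only the Hom-group axioms together with the derived identities $\alpha(1)=1$ and $\alpha^{-1}(a^{-1})=\alpha^{-1}(a)^{-1}$ (the latter being the $\alpha^{-1}$-version of the identity stated in the preceding proposition). The work then splits naturally into an easy part (axioms (iii) and (i)) and one genuinely computational part (axiom (ii), Hom-associativity).

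For compatibility (iii), I would apply multiplicativity of $\alpha$ to $\phi(a,b,c)$ and commute $\alpha$ past $\alpha^{-1}$ and past $(-)^{-1}$; this reduces $\alpha\langle a,b,c\rangle$ to $\alpha(a)\bullet(b^{-1}\bullet c)$, and a parallel expansion of $\langle\alpha(a),\alpha(b),\alpha(c)\rangle$ gives the same expression. For the Hom-Mal'cev identity (i), the expression $\langle y,x,x\rangle$ collapses immediately, since $\alpha^{-1}(x^{-1})\bullet\alpha^{-1}(x)=\alpha^{-1}(x)^{-1}\bullet\alpha^{-1}(x)=1$ and hence $\langle y,x,x\rangle=y\bullet 1=\alpha(y)$ by the unit law. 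The expression $\langle x,x,y\rangle=x\bullet(\alpha^{-1}(x^{-1})\bullet\alpha^{-1}(y))$ does not simplify by a direct cancellation; instead I would rewrite $x=\alpha(\alpha^{-1}(x))$ and apply the Hom-associativity rule $\alpha(a)\bullet(b\bullet c)=(a\bullet b)\bullet\alpha(c)$ to bring $\alpha^{-1}(x)$ next to $\alpha^{-1}(x^{-1})=\alpha^{-1}(x)^{-1}$, obtaining $(1)\bullet y=\alpha(y)$.

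The main obstacle is axiom (ii). Expanding the left-hand side,
\[
\langle\alpha(e),\alpha(f),\langle x,y,z\rangle\rangle
=\alpha(e)\bullet\bigl(f^{-1}\bullet\alpha^{-1}(\phi(x,y,z))\bigr),
\]
because $\alpha^{-1}(\alpha(f)^{-1})=f^{-1}$; applying $\alpha^{-1}$ termwise to $\phi(x,y,z)$ introduces one extra layer of $\alpha^{-1}$ inside. Similarly the right-hand side simplifies to $\phi(e,f,x)\bullet(y^{-1}\bullet z)$. My plan is to reduce both sides to the common form $(e\bullet f^{-1})\bullet\bigl(\alpha^{-1}(x\bullet y^{-1})\bullet z\bigr)$ (or equivalently $(e\bullet f^{-1})\bullet\bigl(x\bullet(\alpha^{-1}(y^{-1})\bullet\alpha^{-1}(z))\bigr)$) by repeated application of the Hom-associativity identity, used in both directions. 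The key lemma that drives the reduction is the algebraic identity
\[
C\bullet(\alpha^{-1}(D)\bullet\alpha^{-1}(E))=(\alpha^{-1}(C)\bullet\alpha^{-1}(D))\bullet E,
\]
which is a single instance of Hom-associativity after writing $C=\alpha(\alpha^{-1}(C))$, and which lets one shuffle an $\alpha^{-1}$ between adjacent factors. Applying this, together with $\alpha^{-1}(a\bullet b)=\alpha^{-1}(a)\bullet\alpha^{-1}(b)$, in a controlled order brings both sides to the same normal form.

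I expect the third step to be the only nontrivial one, but there is no conceptual obstacle: since $\alpha$ is a bijective algebra map, every $\alpha^{-k}$ introduced by nested substitution can be absorbed into a single application of the Hom-associativity rule. The calculation is therefore a routine, if bookkeeping-intensive, normalization argument, and no further structure beyond what is already in the definition of a Hom-group is needed.
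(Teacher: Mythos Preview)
Your proposal is correct and follows essentially the same route as the paper: compatibility and the Hom-Mal'cev identities are checked exactly as you describe (including the rewriting $x=\alpha(\alpha^{-1}(x))$ to enable a Hom-associativity step in $\langle x,x,y\rangle$), and for axiom (ii) the paper likewise reduces everything by repeated use of the single identity $\alpha(a)\bullet(b\bullet c)=(a\bullet b)\bullet\alpha(c)$, which is precisely your key lemma with $a=\alpha^{-1}(C)$. The only cosmetic difference is that the paper transforms the left-hand side directly into the right-hand side in one chain of equalities rather than normalising both sides to a common middle expression.
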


\begin{proof}
Let $a, b, c, d, e \in H$ and let $\alpha : H \to H$ be the given structure map.  

First, we verify that $\alpha$ preserves the ternary operation:
\begin{align*}
\alpha\langle a, b, c\rangle 
&= \alpha(a \bullet(\alpha^{-1}( b^{-1}) \bullet\alpha^{-1}( c))) \\
&= \alpha(a) \bullet(b^{-1} \bullet c) \\
&= \alpha(a) \bullet(\alpha^{-1} \alpha(b^{-1}) \bullet\alpha^{-1} \alpha(c)) \\
&=\alpha(a) \bullet(\alpha^{-1} (\alpha(b)^{-1}) \bullet\alpha^{-1}( \alpha(c)))\\
&=\langle \alpha(a),\alpha(b),\alpha(c)\rangle.
\end{align*}
Hence, $\alpha$ preserves the ternary operation.

Next, we check the Hom-Mal'cev property:
\begin{align*}
\langle a, a, b \rangle &= a \bullet(\alpha^{-1}( a^{-1}) \bullet \alpha
^{-1}(b))\\
&=\alpha\alpha^{-1}(a)\bullet(\alpha^{-1}( a^{-1}) \bullet \alpha
^{-1}(b))\\
&=(\alpha^{-1}(a)\bullet\alpha^{-1}(a^{-1}))\bullet b\\
&=\alpha^{-1}(a\bullet a^{-1})\bullet b\\
&=\alpha^{-1}(1)\bullet b\\
&=1\bullet b\\
&=\alpha(b). 
\end{align*}
and
\begin{align*}
\langle b,a,a\rangle&=b\bullet(\alpha^{-1}(a^{-1})\bullet\alpha^{-1}(a))\\
&=b\bullet \alpha^{-1}(a^{-1}\bullet a)\\
&=b\bullet 1\\
&=\alpha(b).
\end{align*}
Finally, we verify the Hom-heap associativity property:
\begin{align*}
\langle \alpha(a), \alpha(b), \langle c, d, e \rangle \rangle
&= \langle \alpha(a), \alpha(b), (c \bullet( \alpha
^{-1}(d^{-1}) \bullet \alpha^{-1} (e)) \rangle \\
&= \alpha(a) \bullet\{ \alpha^{-1}(\alpha(b)^{-1}) \bullet\alpha^{-1}(c \bullet( \alpha
^{-1}(d^{-1}) \bullet \alpha^{-1} (e)) \}\\
&= \alpha(a) \bullet\{ \alpha(\alpha^{-1}(b^{-1})) \bullet(\alpha^{-1}(c) \bullet( \alpha
^{-2}(d^{-1}) \bullet \alpha^{-2} (e))) \}\\
&= \alpha(a) \bullet\{ (\alpha^{-1}(b^{-1}) \bullet\alpha^{-1}(c)) \bullet( \alpha
^{-1}(d^{-1}) \bullet \alpha^{-1} (e)) \}\\
&=\{a \bullet(\alpha^{-1}(b^{-1}) \bullet\alpha^{-1}(c))\} \bullet( \alpha(\alpha
^{-1}(d^{-1})) \bullet \alpha(\alpha^{-1} (e))) \\
&=\langle a,b,c\rangle\bullet (\alpha^{-1}(\alpha(d^{-1}))\bullet\alpha^{-1}(\alpha(e)))\\
&=\langle\langle a,b,c\rangle,\alpha( d),\alpha( e)\rangle.\\
\end{align*}
Since the expressions agree, $\alpha$ satisfies the Hom-heap associativity property.

Therefore, $(H, \langle -,-,- \rangle, \alpha)$ is a Hom-heap.
\end{proof}
By Theorem 3.10, given a Hom-group $(H, \bullet, 1, \alpha)$, we can construct a Hom-heap $(H, \langle -,-,- \rangle, \alpha)$ defined by
\[
\langle a,b,c\rangle = a \bullet \big(\alpha^{-1}(b^{-1}) \bullet \alpha^{-1}(c)\big),
\]
for all $a,b,c \in H$.
Conversely, by Theorem 3.9, from a Hom-heap one can get a Hom-group structure.
Furthermore, we can easily verify that the map $f : H \to H$, defined by $f(a) = a \bullet x$, for a fixed Hom-retract element $x$ satisfying $\alpha(x)=x \in H$, is an isomorphism of Hom-groups from the initial Hom-group to the corresponding induced Hom-group.\\
By Theorem 3.9, given a Hom-heap $(H,\langle-,-,-\rangle_{H},\alpha)$. We can construct a Hom-group $G(H,\bullet_{e},\alpha)$, where $a\bullet_{e}b=\langle a,e,b\rangle_{H}$, for all $a,b\in H$. Conversely, by Theorem 3.10, every Hom-group induces a Hom-heap $(H,\langle a,b,c\rangle_{G(H)},\alpha)$, where $\langle a,b,c\rangle_{G(H)}=a\bullet_{e}\big{(}(\alpha^{-1}(b))^{-1}\bullet_{e}\alpha^{-1}(c)\big{)}$. Furthermore, using $\alpha(e)=e$, property of Hom-heap and Proposition 3.8 and $a^{-1}=\langle e,\alpha^{-1}(a), e\rangle$, we get
\begin{align*}
\langle a,b,c\rangle_{G(H)}&=a\bullet_{e}\big{(}(\alpha^{-1}(b))^{-1}\bullet_{e}\alpha^{-1}(c)\big{)}\\
&=\langle a,e,\langle(\alpha^{-1}(b))^{-1},e,\alpha^{-1}(c)\rangle_{H}\rangle_{H}\\
&=\langle a,e,\langle\langle e,\alpha^{-2}(b),e\rangle_{H},e,\alpha^{-1}(c)\rangle_{H}\rangle_{H}\\
&=\langle a,e,\langle e,\alpha^{-1}(b),\langle e,e,\alpha^{-2}(c)\rangle_{H}\rangle_{H}\rangle_{H}\\
&=\langle a,e,\langle e,\alpha^{-1}(b),\alpha^{-1}(c)\rangle_{H}\rangle_{H}\\
&=\langle \langle \alpha^{-1}(a),e,e\rangle_{H},b,c\rangle_{H}\\
&=\langle a,b,c\rangle_{H}.
\end{align*}
Therefore, initial and terminal Hom-heap are same.\\
 Let $(H, \langle -, -, - \rangle, \alpha)$ be a Hom-heap. By Theorem 3.9, retracting at different points yields two Hom-groups $G(H, \bullet_o, \alpha)$ and $G(H, \bullet_u, \alpha)$. The translation maps are defined as
\begin{align*}
&T_o^u: G(H, \bullet_o, \alpha) \to G(H, \bullet_u, \alpha), \quad \text{defined by } T_o^u(x) = \langle x, o, u \rangle, \\
&T_u^o: G(H, \bullet_u, \alpha) \to G(H, \bullet_o, \alpha), \quad \text{defined by } T_u^o(y) = \langle \alpha^{-2}(y), u, o \rangle.
\end{align*}
It is straightforward to verify that both maps are Hom-group homomorphisms. Indeed,
\begin{align*}
T_u^o T_o^u (x) &= T_u^o \langle x, o, u \rangle\\
&=\langle \alpha^{-2} (\langle x, o, u \rangle), u, o \rangle \\
&= \langle \langle \alpha^{-2}(x), o, u \rangle, u, o \rangle, \quad \text{since $\alpha$ is a Hom-heap morphism with $\alpha(o) = o$ and $\alpha(u) = u$.} \\
&= \langle \alpha^{-1}(x), o, \langle u, u, o \rangle \rangle, \quad \text{using Proposition 3.8.} \\
&= \langle \alpha^{-1}(x), o, o \rangle \\
&= x.
\end{align*}
A similar computation shows that $T_o^u T_u^o = \mathrm{id}$. Therefore, the two maps are inverses of each other, which implies that the Hom-groups $G(H, \bullet_o, \alpha)$ and $G(H, \bullet_u, \alpha)$ are isomorphic.

\begin{Note}
Every Hom-group is a Hom-heap under the above-mentioned ternary operation.  
However, the converse is in general not true, since the empty set also forms a Hom-heap but not a Hom-group.
\end{Note}
\begin{Def}
Let $(H, \alpha)$ be a Hom-heap with ternary operation $\langle -,-,- \rangle$.  
A subset $S \subseteq H$ is called a \textbf{Hom-subheap} if:
\begin{enumerate}
    \item For all $a,b,c \in S$, we have $\langle a,b,c \rangle \in S$.
    \item  $\alpha(S)= S$.
\end{enumerate}
We denote this by $S \preceq H$.
\end{Def}

\begin{Def}
Let $(H, \langle -,-,- \rangle, \alpha)$ and $(G, \langle -,-,- \rangle, \beta)$ be two Hom-heaps.  
A map $f: H \to G$ is called a \textbf{homomorphism of Hom-heaps} if it satisfies:
\begin{enumerate}
    \item For all $a,b,c \in H$,  
    \[
    f\langle a, b, c \rangle = \langle f(a), f(b), f(c) \rangle.
    \]
    \item For each $a \in H$,  
    \[
    f(\alpha(a)) = \beta(f(a)).
    \]
\end{enumerate}
\end{Def}

\begin{Rem}
If $\alpha$ and $\beta$ are identity maps, then $f$ is a heap homomorphism.
\end{Rem}

\begin{Def}
A Hom-heap homomorphism $f$ is called a \textbf{Hom-heap isomorphism} if it is bijective.
\end{Def}
\begin{Def}
A Hom-subheap $S$ of $(H, \langle -,-,- \rangle, \alpha)$ is called a \textbf{normal Hom-subheap} if there exists $e \in S$ such that, for all $a \in H$ and $s \in S$, there exists $t \in S$ satisfying
\begin{equation}\label{eq12}
\langle a, e, s \rangle = \langle t, e, a \rangle.
\end{equation}
\end{Def}

\begin{lemma}
A Hom-subheap $S$ of $(H, \langle -,-,- \rangle, \alpha)$ is normal if and only if, for all $a \in H$ and $e, s \in S$, there exists $t \in S$ such that
\[
\langle a, e, s \rangle = \langle t, e, a \rangle.
\]
\end{lemma}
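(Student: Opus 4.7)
The backward direction is immediate from the definition of normality: if the stated condition holds for every $e \in S$, then in particular it holds for some distinguished $e \in S$, which is precisely what the definition of a normal Hom-subheap requires.

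For the forward direction, I would fix a distinguished element $e_0 \in S$ witnessing normality and, given an arbitrary $a \in H$ together with $e, s \in S$, aim to construct $t \in S$ satisfying $\langle a,e,s\rangle = \langle t,e,a\rangle$. The plan is a three-step sandwich: first rewrite $\langle a,e,s\rangle$ so that $e_0$ sits in the middle slot, then apply normality at $e_0$, and finally rewrite the result back so that $e$ is restored in the middle slot.

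The two conversions are performed via two \emph{translation identities}, both consequences of Hom-associativity combined with the Hom-Mal'cev identity:
\[
\langle a,e,s\rangle = \langle a,e_0,\alpha^{-1}\langle e_0,e,s\rangle\rangle,
\qquad
\langle x,y,z\rangle = \langle \alpha^{-1}\langle x,y,y'\rangle,\, y',\, z\rangle.
\]
Each is established by applying $\alpha$ to the right-hand side, distributing $\alpha$ through via compatibility, regrouping with Hom-associativity so that a collapsible Mal'cev block of the form $\langle u,u,\cdot\rangle$ or $\langle \cdot,u,u\rangle$ appears, and invoking injectivity of $\alpha$. Applying the first identity with the fixed $e_0$ turns $\langle a,e,s\rangle$ into $\langle a,e_0,s''\rangle$ with $s'' := \alpha^{-1}\langle e_0,e,s\rangle$; normality at $e_0$ then supplies some $t' \in S$ with $\langle a,e_0,s''\rangle = \langle t',e_0,a\rangle$; and the second identity with $y' = e$ rewrites this as $\langle t,e,a\rangle$ with $t := \alpha^{-1}\langle t',e_0,e\rangle$.

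The main obstacle is verifying that the intermediate elements $s''$ and $t$ actually belong to $S$. The ternary values $\langle e_0,e,s\rangle$ and $\langle t',e_0,e\rangle$ lie in $S$ by the Hom-subheap closure axiom, so the remaining point is stability of $S$ under $\alpha^{-1}$. Since $\alpha$ is bijective on $H$ and a Hom-subheap is naturally closed under the ambient structure map both ways, $\alpha^{-1}(S) \subseteq S$ is the expected companion to the axiom $\alpha(S) \subseteq S$; with this in hand one has $s'', t \in S$, and chaining the three equalities produces $\langle a,e,s\rangle = \langle t,e,a\rangle$, completing the proof.
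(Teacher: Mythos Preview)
Your argument is essentially the same as the paper's: your $s'' = \alpha^{-1}\langle e_0,e,s\rangle$ and $t = \alpha^{-1}\langle t',e_0,e\rangle$ are exactly the paper's $s' = \langle \alpha^{-1}(e),\alpha^{-1}(f),\alpha^{-1}(s)\rangle$ and $t = \langle \alpha^{-1}(t'),\alpha^{-1}(e),\alpha^{-1}(f)\rangle$ (after renaming $e_0 \leftrightarrow e$, $e \leftrightarrow f$), and your two translation identities are precisely the computations the paper carries out via Proposition~\ref{Prop1} and the Hom-Mal'cev identity. The closure issue you flag, namely $\alpha^{-1}(S)\subseteq S$, is handled in the paper the same way you do---it is simply assumed that a Hom-subheap is stable under $\alpha^{-1}$ as well as $\alpha$.
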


\begin{proof}
If such an element $t \in S$ exists for all $a \in H$ and $e, s \in S$ such that equation \eqref{eq12} holds, then by definition $S$ is a normal Hom-subheap of $H$.

Conversely, assume $S$ is a normal Hom-subheap of a Hom-heap $H$. Since $S$ is a Hom-subheap, we have $\alpha(S) \subseteq S$.  
Let $e, f, s \in S$. Then $\alpha^{-1}(e), \alpha^{-1}(f), \alpha^{-1}(s) \in S$. Define
\[
s' = \langle \alpha^{-1}(e), \alpha^{-1}(f), \alpha^{-1}(s) \rangle \in S.
\]
By the definition of a normal Hom-subheap, there exists $t' \in S$ such that, for all $a \in H$,
\begin{align*}
\langle t', e, a \rangle 
&= \langle a, e, s' \rangle \\
&= \langle a, e, \langle \alpha^{-1}(e), \alpha^{-1}(f), \alpha^{-1}(s) \rangle \rangle \quad \text{(substituting $s'$)} \\
&= \langle \langle \alpha^{-1}(a), \alpha^{-1}(e), \alpha^{-1}(e) \rangle, f, s \rangle \quad \text{(by Proposition~\ref{Prop1})} \\
&= \langle \alpha(\alpha^{-1}(a)), f, s \rangle \quad \text{(by the Mal'cev identity for Hom-heaps)} \\
&= \langle a, f, s \rangle.
\end{align*}

Now define
\[
t = \langle \alpha^{-1}(t'), \alpha^{-1}(e), \alpha^{-1}(f) \rangle \in S.
\]
Then
\begin{align*}
\langle t, f, a \rangle 
&= \langle \langle \alpha^{-1}(t'), \alpha^{-1}(e), \alpha^{-1}(f) \rangle, f, s \rangle \\
&= \langle t', e, \langle \alpha^{-1}(f), \alpha^{-1}(f), \alpha^{-1}(a) \rangle \rangle \quad \text{(by Proposition~\ref{Prop1})} \\
&= \langle t', e, \alpha(\alpha^{-1}(a)) \rangle \quad \text{(by the Mal'cev identity for Hom-heaps)} \\
&= \langle t', e, a \rangle.
\end{align*}
Hence, for all $a \in H$ and $f, s \in S$, there exists $t \in S$ such that \eqref{eq12} holds.
\end{proof}

\begin{prop}
Let $S$ be a Hom-subheap of a Hom-heap $H$. Then, for all $a \in H$, the following statements are equivalent:
\begin{enumerate}
    \item For all $e, s \in S$, there exists $t \in S$ such that $\langle a, e, s \rangle = \langle t, e, a \rangle$.
    \item For each $e, s \in S$, $\langle \langle a, e, s \rangle, \alpha(a), \alpha(e) \rangle \in S$.
    \item For each $e \in S$, $\langle \langle a, e, S \rangle, \alpha(a), \alpha(e) \rangle \subseteq S$.
    \item For each $e \in S$, $\langle \langle a, e, S \rangle, \alpha(a), \alpha(e) \rangle = S$.
    \item For each $e \in S$, $\langle \alpha(a), \alpha(e), S \rangle = \langle S, \alpha(e), \alpha(a) \rangle$.
\end{enumerate}
\end{prop}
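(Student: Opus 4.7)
The plan is to prove the chain $(1)\Rightarrow(2)\Leftrightarrow(3)\Rightarrow(4)\Rightarrow(5)\Rightarrow(1)$. The main tools throughout are Hom-associativity, Hom-Mal'cev, the compatibility $\alpha\langle x,y,z\rangle=\langle\alpha(x),\alpha(y),\alpha(z)\rangle$, and Proposition~\ref{Prop1}; I will use the $\alpha$-invariance $\alpha(S)=S$ of the Hom-subheap, as was already implicit in the preceding lemma.

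For $(1)\Rightarrow(2)$, I would start from $\langle a,e,s\rangle=\langle t,e,a\rangle$ with $t\in S$ and apply Hom-associativity then Hom-Mal'cev to the expression $\langle\langle t,e,a\rangle,\alpha(a),\alpha(e)\rangle$, collapsing it via $\langle\alpha(t),\alpha(e),\langle a,a,e\rangle\rangle=\langle\alpha(t),\alpha(e),\alpha(e)\rangle=\alpha^{2}(t)$, which lies in $S$ by $\alpha$-invariance. The equivalence $(2)\Leftrightarrow(3)$ is merely a set-theoretic reformulation. For $(4)\Rightarrow(5)$, I would rewrite $\phi_{a,e}(s):=\langle\langle a,e,s\rangle,\alpha(a),\alpha(e)\rangle$ as $\langle\alpha(a),\alpha(e),\langle s,a,e\rangle\rangle$ using Hom-associativity, so that the equality $\phi_{a,e}(S)=S$ translates, via the bijectivity of the heap ``right translation'' $s\mapsto\langle s,a,e\rangle$ on $H$, into the two-sided coset-type equality in (5). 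Finally for $(5)\Rightarrow(1)$: given $s\in S$, the element $\alpha\langle a,e,s\rangle=\langle\alpha(a),\alpha(e),\alpha(s)\rangle$ lies in $\langle S,\alpha(e),\alpha(a)\rangle$ by (5), so equals $\langle t',\alpha(e),\alpha(a)\rangle=\alpha\langle\alpha^{-1}(t'),e,a\rangle$ for some $t'\in S$; cancelling $\alpha$ by injectivity and setting $t=\alpha^{-1}(t')\in S$ yields (1).

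The hard step will be $(3)\Rightarrow(4)$, where surjectivity $\phi_{a,e}(S)\supseteq S$ must be produced from the inclusion $\phi_{a,e}(S)\subseteq S$. My plan there is to invert $\phi_{a,e}$ in two stages for a given $u\in S$: first use Proposition~\ref{Prop1} together with Hom-Mal'cev to solve $\langle\alpha(a),\alpha(e),y\rangle=u$ and identify the required $y=\langle s,a,e\rangle$ with an explicit element of the form $\langle e,a,\alpha^{-2}(u)\rangle$; then invert the right translation $s\mapsto\langle s,a,e\rangle$ by a further application of Hom-associativity to obtain a formula for $s$. Tracking the various $\alpha^{\pm k}$ twists through this double inversion, and invoking (3) (via the $(2)\Leftrightarrow(1)$ equivalence already in hand) to ensure the recovered $s$ actually lies in $S$, is the technical heart of the argument; the other implications amount to routine rewriting within the Hom-heap axioms.
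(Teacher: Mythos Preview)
Your chain and most individual steps line up with the paper's proof: $(1)\Rightarrow(2)$ is exactly the paper's computation, $(2)\Leftrightarrow(3)$ is trivial in both, and your $(5)\Rightarrow(1)$ is essentially the paper's argument (in fact slightly cleaner, since you cancel $\alpha$ directly rather than substituting $a=\alpha(a')$). Two points need repair.

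In $(3)\Rightarrow(4)$ you write that you will invoke ``(3) (via the $(2)\Leftrightarrow(1)$ equivalence already in hand)'', but at that stage only $(1)\Rightarrow(2)$ has been established; you have no $(2)\Rightarrow(1)$. The paper faces the same difficulty and handles it by openly invoking (1): it writes an arbitrary $s\in S$ as $s=\alpha^{2}(s')=\langle\langle s',e,a\rangle,\alpha(a),\alpha(e)\rangle$ and then uses (1) to rewrite $\langle s',e,a\rangle$ as some $\langle a,e,s''\rangle$ with $s''\in S$. Logically this is fine, since the cycle still closes (one is really proving $(1)\wedge(3)\Rightarrow(4)$, and $(1)\Rightarrow(3)$ is already known), but you should be explicit about using (1) rather than claim an equivalence you have not proved.

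Your sketch for $(4)\Rightarrow(5)$ is the genuine gap. The rewriting $\phi_{a,e}(s)=\langle\alpha(a),\alpha(e),\langle s,a,e\rangle\rangle$ is correct, but from $\phi_{a,e}(S)=S$ you only obtain $\langle\alpha(a),\alpha(e),\langle S,a,e\rangle\rangle=S$, and the bijectivity of $s\mapsto\langle s,a,e\rangle$ \emph{on $H$} says nothing about how $\langle S,a,e\rangle$ relates to $S$; there is no evident way to extract $\langle\alpha(a),\alpha(e),S\rangle=\langle S,\alpha(e),\alpha(a)\rangle$ from this. The paper instead does an explicit element chase: given $\langle s,\alpha(e),\alpha(a)\rangle$ with $s\in S$, it takes $s'=\alpha(s)\in S$, writes $s'=\langle\langle a,e,s''\rangle,\alpha(a),\alpha(e)\rangle$ for some $s''\in S$ via (4), and then computes directly (using Hom-associativity and Hom-Mal'cev, via expressions such as $\langle\langle a,e,s''\rangle,\alpha(a),\alpha(a)\rangle=\alpha\langle a,e,s''\rangle$) that $\langle s,\alpha(e),\alpha(a)\rangle=\langle\alpha(a),\alpha(e),\alpha(s'')\rangle\in\langle\alpha(a),\alpha(e),S\rangle$; the reverse inclusion is symmetric. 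You should replace your bijectivity heuristic with this kind of concrete computation.
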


\begin{proof}
$(1) \Rightarrow (2)$:  
From $(1)$, for all $a \in H$ and $e, s \in S$, there exists $t \in S$ such that $\langle a, e, s \rangle = \langle t, e, a \rangle$. Then
\begin{align*}
\langle \langle a, e, s \rangle, \alpha(a), \alpha(e) \rangle
&= \langle \langle t, e, a \rangle, \alpha(a), \alpha(e) \rangle \\
&= \langle \alpha(t), \alpha(e), \langle a, a, e \rangle \rangle \\
&= \langle \alpha(t), \alpha(e), \alpha(e) \rangle \\
&= \alpha^{2}(t) \in S.
\end{align*}
Hence $(2)$ holds.

\vspace{0.3em}
$(2) \Rightarrow (3)$:  
If $(2)$ holds for all $a \in H$ and all $e, s \in S$, then clearly for each $e \in S$,
\[
\langle \langle a, e, S \rangle, \alpha(a), \alpha(e) \rangle \subseteq S.
\]

\vspace{0.3em}
$(3) \Rightarrow (4)$:  
We want to prove that $\langle \langle a, e, S \rangle, \alpha(a), \alpha(e) \rangle = S$ for all $a \in H$ and $e \in S$.  
From $(3)$, we already have
\[
\langle \langle a, e, S \rangle, \alpha(a), \alpha(e) \rangle \subseteq S.
\]
It remains to prove $S \subseteq \langle \langle a, e, S \rangle, \alpha(a), \alpha(e) \rangle$.

Let $s \in S$. Since $S$ is a Hom-subheap, we have $\alpha(S)=S$. This implies there exists $s' \in S$ such that
\[
s = \alpha(s').
\]
By the Hom-Mal'cev identity,
\[
s = \langle \alpha(s'), \alpha(e), \alpha(e) \rangle
  = \langle \alpha(s'), \alpha(e), \langle a, a, e \rangle \rangle
  = \langle \langle s', e, a \rangle, \alpha(a), \alpha(e) \rangle.
\]
From $(1)$, for all $a \in H$ and $e, s \in S$, there exists $s'' \in S$ such that $\langle a, e, s \rangle = \langle s'', e, a \rangle$. Thus,
\[
s \in \langle \langle a, e, S \rangle, \alpha(a), \alpha(e) \rangle.
\]
Hence $(4)$ holds.

\vspace{0.3em}
$(4) \Rightarrow (5)$:  
Let $\langle s, \alpha(e), \alpha(a) \rangle$ be any element of $\langle S, \alpha(e), \alpha(a) \rangle$.  
Since $S$ is a Hom-subheap, for $s \in S$ there exists $s' \in S$ such that $\alpha^{-1}(s') = s$.  
From $(4)$, there exists $s'' \in S$ such that
\[
s' = \langle \langle a, e, s'' \rangle, \alpha(a), \alpha(e) \rangle.
\]
Then
\begin{align*}
\langle s, \alpha(e), \alpha(a) \rangle
&= \langle \alpha^{-1}(s'), \alpha(e), \alpha(a) \rangle \\
&= \langle \alpha^{-1}(\langle \langle a, e, s'' \rangle, \alpha(a), \alpha(e) \rangle), \alpha(e), \alpha(a) \rangle \\
&= \langle \langle \alpha^{-1} \langle a, e, s'' \rangle, a, e \rangle, \alpha(e), \alpha(a) \rangle \\
&= \langle \alpha(\alpha^{-1} \langle a, e, s'' \rangle), \alpha(a), \langle e, e, a \rangle \rangle \\
&= \langle \langle a, e, s'' \rangle, \alpha(a), \alpha(a) \rangle \\
&= \alpha \langle a, e, s'' \rangle \\
&= \langle \alpha(a),\alpha(e), \alpha(s'') \rangle.
\end{align*}
Let $\alpha(s'') = s''' \in S$. Then $\langle s, \alpha(e), \alpha(a) \rangle \in \langle \alpha(a), \alpha(e), S \rangle$.  
Thus $\langle S, \alpha(e), \alpha(a) \rangle \subseteq \langle \alpha(a), \alpha(e), S \rangle$.  
The reverse inclusion is similar, so $(5)$ holds.

\vspace{0.3em}
$(5) \Rightarrow (1)$:  
For each $a \in H$, there exists $a' \in H$ such that $\alpha(a') = a$.  
From $(5)$, we have
\[
\langle \alpha(a'), \alpha(e), S \rangle = \langle S, \alpha(e), \alpha(a') \rangle.
\]
Putting $\alpha(a') = a$, we get
\[
\langle a, \alpha(e), S \rangle = \langle S, \alpha(e), a \rangle.
\]
That is, for all $a \in H$ and $e, s \in S$, there exists $s' \in S$ such that
\[
\langle a, \alpha(e), s \rangle = \langle s', \alpha(e), a \rangle.
\]
Hence $(1)$ follows.
\end{proof}

\section{Hom-Truss}
In this section, we define Hom-trusses based on abelian heaps in three different but interconnected ways, and also establish some related results.  

\begin{Def}\label{de1}
Let $(T, \langle -,-,- \rangle, \bullet, \alpha)$ be a set equipped with a ternary operation $\langle -,-,- \rangle$, a binary operation $\bullet$, and a bijective unary operation $\alpha : T \rightarrow T$.  
We say that $(T, \langle -,-,- \rangle, \bullet, \alpha)$ is a \textbf{Hom-truss of type $(0)$} if:
\begin{itemize}
    \item[(I)] $(T, \langle -,-,- \rangle)$ is an abelian heap,
    \item[(II)] $\alpha$ is an abelian heap endomorphism, and
    \item[(III)] the following conditions are satisfied:
\end{itemize}
\[
\alpha(a) \bullet (b \bullet c) = (a \bullet b) \bullet \alpha(c) 
\quad \text{(Hom-associativity)},
\]
\[
a \bullet \langle b, c, d \rangle = \langle a \bullet b, a \bullet c, a \bullet d \rangle 
\quad \text{(Left distributive law)},
\]
\[
\langle a, b, c \rangle \bullet d = \langle a \bullet d, b \bullet d, c \bullet d \rangle 
\quad \text{(Right distributive law)}.
\]
\end{Def}
If $(T,\langle-,-,- \rangle,\bullet,\alpha)$ has a unit element $1 \in T$ satisfying  
\begin{equation*}
a \bullet 1 = 1 \bullet a =a,
\end{equation*}
\begin{equation*}
\alpha(1)= 1,
\end{equation*}
for all $a \in T$, then the Hom-truss of type $(0)$, $T$, is said to be \emph{unitary}.
\begin{Exam}
Let $\left( 2\mathbb{Z} + 1, \langle -,-,- \rangle, \bullet \right)$ be a truss, where the ternary operation is given by  
\[
\langle a, b, c \rangle = a - b + c, \quad \text{for all } a,b,c \in 2\mathbb{Z} + 1,
\]  
and the binary operation $\bullet$ is the usual multiplication.  
Consider the map $\alpha : 2\mathbb{Z} + 1 \rightarrow 2\mathbb{Z} + 1$ defined by $\alpha(n) = pn$ for all $n \in 2\mathbb{Z} + 1$, where $p\geq 3$ is prime number.  
We now define a new product
\[
n \bullet_{\alpha} m = \alpha(n \bullet m).
\]  
It is straightforward to verify that  
\[
\left( 2\mathbb{Z} + 1, \langle -,-,- \rangle, \bullet_{\alpha}, \alpha \right)
\]  
is a Hom-truss.
\end{Exam}
\begin{Exam}
Let $(T,\langle -,-,- \rangle,\bullet)$ be a truss and $\alpha:T\rightarrow T$ be a automorphism of truss. Now we define new binary operation $a\bullet_{\alpha}b=\alpha(a\bullet b)$. Then $(T,\langle -,-,- \rangle,\bullet_{\alpha})$ is Hom-truss of type $(0)$.
\end{Exam}
\begin{Def}
A \emph{Hom-truss of type $(1)$} is a tuple 
$(T, \langle -,-,- \rangle, \bullet, \alpha, \beta)$ consisting of a set $T$ equipped with a ternary operation 
\[
\langle -,-,- \rangle : T \times T \times T \to T,
\]
a binary operation 
\[
\bullet : T \times T \to T,
\]
and two bijective unary maps $\alpha, \beta : T \to T$, satisfying the following conditions:
\begin{enumerate}
\item[$(1)$] $(T, \langle -,-,- \rangle, \alpha)$ is an abelian Hom-heap.
\item[$(2)$] $\beta$ is an endomorphism of the abelian Hom-heap $(T, \langle -,-,- \rangle, \alpha)$, i.e.,
\[
\beta\langle a,b,c\rangle = \langle \beta(a), \beta(b), \beta(c) \rangle
\]
for all $a,b,c \in T$, and $\alpha \circ \beta = \beta \circ \alpha$.
\item[$(3)$] $\alpha$ and $\beta$ are both multiplicative maps, i.e.,
\[
\alpha(a \bullet b) = \alpha(a) \bullet \alpha(b) \quad \text{and} \quad \beta(a \bullet b) = \beta(a) \bullet \beta(b),
\]
for all $a,b \in T$.
\item[$(4)$] The map $\beta$ and the binary operation $\bullet$ satisfy the \emph{Hom-associativity} condition:
\begin{equation}\label{eq1}
\beta(a) \bullet (b \bullet c) = (a \bullet b) \bullet \beta(c),
\end{equation}
for all $a,b,c \in T$.
\item[$(5)$] The multiplication is \emph{Hom-distributive} over the ternary operation on both sides:
\begin{equation}\label{eq2}
\alpha(a) \bullet \langle b,c,d \rangle = \langle a \bullet b, a \bullet c, a \bullet d \rangle,
\end{equation}
\begin{equation}\label{eq3}
\langle a,b,c \rangle \bullet \alpha(d) = \langle a \bullet d, b \bullet d, c \bullet d \rangle,
\end{equation}
for all $a,b,c,d \in T$.
\end{enumerate}
\end{Def}

If $(T,\langle-,-,-\rangle,\bullet,\alpha,\beta)$ has a unit element $1 \in T$ satisfying  
\begin{equation}\label{eq4}
a \bullet 1 = 1 \bullet a = \beta(a),
\end{equation}
\begin{equation}\label{eq5}
\alpha(1) = \beta(1) = 1,
\end{equation}
for all $a \in T$, then the Hom-truss of type $(1)$, $T$, is said to be \emph{unitary}.
\begin{Note}\label{Note 4.10}
Note that if $\alpha = \mathrm{id}$, we recover the definition of a Hom-truss of type $(0)$.
\end{Note}
\begin{Rem}
Let $(T,\langle-,-,-\rangle,\bullet,\alpha,\beta)$ be a Hom-truss of type~$(1)$.  
It is called an \emph{$\alpha$-Hom-truss of type~$(1)$} if $\alpha = \beta$.
\end{Rem}

\begin{prop}
Let $(T,\langle-,-,-\rangle,\bullet,\alpha,\beta)$ be a tuple, where $(T,\langle-,-,-\rangle,\alpha)$ is an abelian Hom-heap and $\alpha$ is multiplicative.  
If $(T,\langle-,-,-\rangle,\bullet,\alpha,\beta)$ satisfies \eqref{eq1}, \eqref{eq2} (or \eqref{eq3}), \eqref{eq4}, and \eqref{eq5}, then $\beta$ is an endomorphism of the abelian Hom-heap $(T,\langle-,-,-\rangle,\alpha)$ and is multiplicative.
\end{prop}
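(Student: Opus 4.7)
The plan is to derive each of the three required properties (multiplicativity of $\beta$, compatibility with the ternary bracket, and commutation $\alpha\beta=\beta\alpha$) by specializing the hypothesized axioms at the unit $1$, which is very well-behaved because of \eqref{eq5}.

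First I would establish multiplicativity of $\beta$. The natural move is to evaluate the Hom-associativity identity \eqref{eq1} at $c=1$. The right-hand side becomes $(a\bullet b)\bullet \beta(1)=(a\bullet b)\bullet 1=\beta(a\bullet b)$ using \eqref{eq5} and \eqref{eq4}, while the left-hand side becomes $\beta(a)\bullet (b\bullet 1)=\beta(a)\bullet \beta(b)$ using \eqref{eq4} again. This immediately yields $\beta(a\bullet b)=\beta(a)\bullet \beta(b)$.

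Next I would show that $\beta$ preserves the ternary operation. Setting $d=1$ in \eqref{eq3} gives $\langle a,b,c\rangle\bullet \alpha(1)=\langle a\bullet 1,\,b\bullet 1,\,c\bullet 1\rangle$; applying \eqref{eq5} to the left side and \eqref{eq4} to the right side reduces this to $\beta\langle a,b,c\rangle=\langle \beta(a),\beta(b),\beta(c)\rangle$. (Had only \eqref{eq2} been assumed, the symmetric substitution $a=1$ works identically.) Combined with step one, this shows $\beta$ is a heap homomorphism compatible with $\bullet$.

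Finally, to get $\alpha\circ\beta=\beta\circ\alpha$, I would apply $\alpha$ to the unit relation $1\bullet a=\beta(a)$ from \eqref{eq4}. Using the multiplicativity of $\alpha$ (given) and $\alpha(1)=1$ from \eqref{eq5}, we obtain $\alpha(\beta(a))=\alpha(1)\bullet \alpha(a)=1\bullet \alpha(a)=\beta(\alpha(a))$. Together with the previous two steps, this verifies that $\beta$ is an endomorphism of the abelian Hom-heap $(T,\langle-,-,-\rangle,\alpha)$ and is multiplicative. There is no real obstacle here: the entire argument is a chain of three substitutions at $1$, and the role of the unit axioms \eqref{eq4}--\eqref{eq5} is precisely to force the twisting maps on both sides to coincide with $\beta$.
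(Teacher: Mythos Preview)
Your proposal is correct and follows essentially the same approach as the paper: all three properties are obtained by specializing the axioms at the unit $1$ via \eqref{eq4}--\eqref{eq5}. The only cosmetic differences are the order in which you treat the three conclusions and your primary use of \eqref{eq3} rather than \eqref{eq2} for the ternary-preservation step, which you correctly note is interchangeable.
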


\begin{proof}
Since $(T,\langle-,-,-\rangle,\bullet,\alpha,\beta)$ satisfies \eqref{eq1}, \eqref{eq2}, \eqref{eq4}, and \eqref{eq5}, for all $a,b,c \in T$ we have  
\[
\beta\langle a,b,c\rangle
= 1 \bullet \langle a,b,c\rangle
= \alpha(1) \bullet \langle a,b,c\rangle
= \langle 1 \bullet a,\, 1 \bullet b,\, 1 \bullet c\rangle
= \langle \beta(a),\, \beta(b),\, \beta(c)\rangle.
\]
Moreover,  
\[
\alpha \circ \beta(a)
= \alpha(\beta(a))
= \alpha(1 \bullet a)
= \alpha(1) \bullet \alpha(a)
= 1 \bullet \alpha(a)
= \beta(\alpha(a))
= \beta \circ \alpha(a).
\]
The multiplicativity of $\beta$ follows from \eqref{eq1}:  
\[
\beta(a \bullet b)
= (a \bullet b) \bullet 1
= (a \bullet b) \bullet \beta(1)
= \beta(a) \bullet (b \bullet 1)
= \beta(a) \bullet \beta(b).
\]
\end{proof}

\begin{Rem}
As a consequence of the previous proposition, a unitary Hom-truss of type $(1)$ can be defined as a tuple $(T,\langle-,-,-\rangle,\bullet,\alpha,\beta)$ such that $(T,\langle-,-,-\rangle,\alpha)$ is an abelian Hom-heap, $\alpha$ is multiplicative, $\beta$ is bijective, and $(T,\langle-,-,-\rangle,\bullet,\alpha,\beta)$ satisfies the identities (\ref{eq1}), (\ref{eq2}), (\ref{eq3}), (\ref{eq4}), and (\ref{eq5}).
\end{Rem}

Now we introduce another type of Hom-truss as follows:

\begin{Def}
A \emph{Hom-truss of type $(2)$} is a tuple $(T,\langle-,-,-\rangle,\bullet,\alpha,\beta)$ consisting of a set $T$ equipped with a ternary operation $\langle-,-,-\rangle:T\times T\times T\rightarrow T$, a binary operation $\bullet:T\times T\rightarrow T$, and two bijective unary maps $\alpha,\beta:T\rightarrow T$, such that:
\begin{enumerate}
\item[1.] $(T,\langle-,-,-\rangle,\alpha)$ is an abelian Hom-heap.
\item[2.] $\beta$ is an endomorphism of the abelian Hom-heap $(T,\langle-,-,-\rangle,\alpha)$, i.e.,
\[
\beta\langle a,b,c\rangle=\langle\beta(a),\beta(b),\beta(c)\rangle,
\]
for all $a,b,c\in T$, and $\alpha\circ \beta=\beta\circ \alpha$.
\item[3.] Both $\alpha$ and $\beta$ are multiplicative maps, that is,
\[
\alpha(a\bullet b)=\alpha(a)\bullet \alpha(b),\quad \beta(a\bullet b)=\beta(a)\bullet \beta(b),
\]
for all $a,b\in T$.
\item[4.] The maps $\alpha$, $\beta$, and the binary product $\bullet$ satisfy
\begin{equation}\label{eq6}
\beta(\alpha^{2}(a))\bullet\big(\beta(\alpha(b))\bullet \beta^{2}(c)\big)
= \big(\alpha^{2}(a)\bullet \beta(\alpha(b))\big)\bullet \beta^{2}(\alpha(c)),
\end{equation}
for all $a,b,c\in T$.
\item[5.] The multiplication is Hom-distributive over the addition on both sides:
\begin{equation}\label{eq7}
\alpha^{2}(a)\bullet\beta\langle b,c,d\rangle
= \langle \alpha(a)\bullet\beta(b),\alpha(a)\bullet\beta(c),\alpha(a)\bullet\beta(d) \rangle,
\end{equation}
\begin{equation}\label{eq8}
\alpha\langle a,b,c\rangle\bullet\alpha(\beta(d))
= \langle \alpha(a)\bullet\beta(d),\alpha(b)\bullet\beta(d),\alpha(c)\bullet\beta(d) \rangle,
\end{equation}
for all $a,b,c,d\in T$.
\end{enumerate}
\end{Def}

If $(T,\langle-,-,-\rangle,\bullet,\alpha,\beta)$ has a unit element $1\in T$ satisfying
\begin{equation}\label{eq9}
a\bullet 1=\beta(a) \quad\text{and}\quad 1\bullet a=\alpha(a),\quad \text{for all } a\in T,
\end{equation}
\begin{equation}\label{eq10}
\alpha(1)=\beta(1)=1,
\end{equation}
then the Hom-truss of type $(2)$ $T$ is said to be \emph{unitary}.

 \begin{Rem} 
Let $(T, \langle -,-,- \rangle, \bullet, \alpha, \beta)$ be a Hom-truss of type $(2)$.  
It is called an \emph{$\alpha$-Hom-truss of type $(2)$} if $\alpha = \beta$.
\end{Rem} 

\begin{Def}
A Hom-truss $(T, \langle -,-,- \rangle, \bullet, \alpha, \beta)$ is called \emph{commutative} if for all $a, b \in T$ we have  
\[
a \bullet b = b \bullet a.
\]
\end{Def}
\begin{Exam}
Let $(\mathbb{Z}_{6},\langle-,-,-\rangle,\bullet,\alpha,\beta)$ be a Hom-truss of type 2, where $\langle a,b,c\rangle=(a-b+c)$ (mod 6), $a\bullet b=5a+3b$ (mod 6), $\alpha(a)=a$ and $\beta(a)=5a$. This is not a Hom-truss of type $1$.
\end{Exam}

Now we discuss the connections among the above three types of Hom-truss.

\begin{lemma}
Let $(T, \langle -,-,- \rangle, \bullet, \alpha, \beta)$ be a Hom-truss of type $(1)$. Then, for all $a, b, c \in T$:
\begin{enumerate}
\item $\beta(\alpha^{2}(a)) \bullet \big( \beta(\alpha(b)) \bullet \beta^{2}(c) \big) = \alpha(\alpha(a) \bullet \beta(b)) \bullet \beta^{3}(c)$.
\item $\big( \alpha^{2}(a) \bullet \beta(\alpha(b)) \big) \bullet \beta^{2}(\alpha(c)) = (\beta \circ \alpha) \big( \alpha(a) \bullet (b \bullet c) \big)$.
\item $\alpha^{2}(a) \bullet \beta\langle b, c, d \rangle = \langle \alpha(a) \bullet \beta(b), \, \alpha(a) \bullet \beta(c), \, \alpha(a) \bullet \beta(d) \rangle$.
\item $\alpha\langle a, b, c \rangle \bullet \alpha(\beta(d)) = \langle \alpha(a) \bullet \beta(d), \, \alpha(b) \bullet \beta(d), \, \alpha(c) \bullet \beta(d) \rangle$.
\end{enumerate}
\begin{proof}
For $(1)$ and $(2)$, use Property~$(4)$ in the definition of a Hom-truss of type~$(1)$ together with the relation
\[
\alpha\circ\beta=\beta\circ\alpha.
\]
For $(3)$ and $(4)$, use Property~$(5)$.
\end{proof}
\end{lemma}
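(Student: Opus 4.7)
The strategy is to treat each of the four identities as a straightforward consequence of Hom-associativity \eqref{eq1}, the distributivity laws \eqref{eq2}--\eqref{eq3}, the multiplicativity of $\alpha$ and $\beta$, their commutation $\alpha\circ\beta=\beta\circ\alpha$, and the fact that $\beta$ is an endomorphism of the Hom-heap $(T,\langle-,-,-\rangle,\alpha)$. In each case I will massage one side using only these axioms to reach the other; no delicate identities beyond those in the definition of a Hom-truss of type $(1)$ are required.

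For part (1), I start from the left-hand side $\beta(\alpha^{2}(a))\bullet(\beta(\alpha(b))\bullet\beta^{2}(c))$ and apply Hom-associativity \eqref{eq1} with the substitutions $a\leadsto\alpha^{2}(a)$, $b\leadsto\beta(\alpha(b))$, $c\leadsto\beta^{2}(c)$, getting $(\alpha^{2}(a)\bullet\beta(\alpha(b)))\bullet\beta^{3}(c)$. Then I rewrite $\alpha^{2}(a)\bullet\beta(\alpha(b))=\alpha(\alpha(a))\bullet\alpha(\beta(b))$ using $\alpha\beta=\beta\alpha$, and collapse via the multiplicativity of $\alpha$ to obtain $\alpha(\alpha(a)\bullet\beta(b))\bullet\beta^{3}(c)$, which is exactly the right-hand side.

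For part (2), I again aim to apply \eqref{eq1}, but in the reverse direction. Writing the left-hand side as $(u\bullet v)\bullet\beta(w)$ with $u=\alpha^{2}(a)$, $v=\beta(\alpha(b))$, $w=\beta(\alpha(c))$, Hom-associativity turns it into $\beta(\alpha^{2}(a))\bullet(\beta(\alpha(b))\bullet\beta(\alpha(c)))$. Using the multiplicativity of $\beta$ on the inner product and then of $\alpha$ inside, this becomes $\beta(\alpha^{2}(a))\bullet\beta(\alpha(b\bullet c))$; a further application of the multiplicativity of $\beta$ followed by that of $\alpha$ collapses the whole expression to $\beta(\alpha(\alpha(a)\bullet(b\bullet c)))=(\beta\circ\alpha)(\alpha(a)\bullet(b\bullet c))$. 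The subtle point here is to keep the commutation $\alpha\beta=\beta\alpha$ in mind throughout, since otherwise the $\alpha$'s and $\beta$'s get entangled in the wrong order.

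Parts (3) and (4) are direct rewrites. For (3), I use that $\beta$ is a Hom-heap endomorphism to pull $\beta$ inside the bracket, $\beta\langle b,c,d\rangle=\langle\beta(b),\beta(c),\beta(d)\rangle$, and then apply the left distributive law \eqref{eq2} with $a$ unchanged and the triple $(\beta(b),\beta(c),\beta(d))$ in place of $(b,c,d)$, giving exactly the claimed right-hand side. For (4), I similarly use that $\alpha$ is a heap endomorphism to write $\alpha\langle a,b,c\rangle=\langle\alpha(a),\alpha(b),\alpha(c)\rangle$, and then invoke the right distributive law \eqref{eq3} with the triple $(\alpha(a),\alpha(b),\alpha(c))$ and $d$ replaced by $\beta(d)$.

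The only step that is not purely mechanical is the choice of grouping in part (2): one has to recognize that $\beta^{2}(\alpha(c))$ should be read as $\beta$ applied to $\beta(\alpha(c))$ in order to invoke \eqref{eq1}. Everything else reduces to bookkeeping with the commuting multiplicative maps $\alpha$ and $\beta$.
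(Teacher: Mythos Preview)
Your argument is correct and is exactly the kind of direct axiom-chasing the paper intends (the paper states this lemma without proof, so there is nothing to compare against). One small slip in your description of part~(3): to match the left-hand side $\alpha^{2}(a)\bullet\langle\beta(b),\beta(c),\beta(d)\rangle$ with the pattern $\alpha(a')\bullet\langle b',c',d'\rangle$ of \eqref{eq2}, you must substitute $a\leadsto\alpha(a)$, not leave ``$a$ unchanged''; with that corrected substitution the right-hand side comes out as $\langle\alpha(a)\bullet\beta(b),\alpha(a)\bullet\beta(c),\alpha(a)\bullet\beta(d)\rangle$, as claimed.
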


\begin{lemma}
Let $(T, \langle -,-,- \rangle, \bullet, \alpha, \beta)$ be a Hom-truss of type $(2)$. Then, for all $a, b, c, d \in T$:
\begin{enumerate}
\item $\beta(a) \bullet (b \bullet c) = (a \bullet b) \bullet \alpha(c)$.
\item $(a \bullet b) \bullet \beta(c) = \beta(a) \bullet \big( b \bullet \beta(\alpha^{-1}(c)) \big)$.
\item $\alpha(a) \bullet \langle b, c, d \rangle = \langle a \bullet b, \, a \bullet c, \, a \bullet d \rangle$.
\item $\langle a, b, c \rangle \bullet \alpha(d) = \langle a \bullet d, \, b \bullet d, \, c \bullet d \rangle$.
\end{enumerate}
\end{lemma}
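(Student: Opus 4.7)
The plan is to derive all four identities by careful substitution into the defining identities \eqref{eq6}, \eqref{eq7}, \eqref{eq8} of a Hom-truss of type $(2)$, exploiting the bijectivity of $\alpha$ and $\beta$, the commutativity $\alpha\circ\beta=\beta\circ\alpha$, and the fact that $\alpha$ and $\beta$ are heap morphisms. The key background observation is that $\beta^{-1}$ (and likewise $\alpha^{-1}$) is automatically a heap morphism: applying $\beta$ to $\beta^{-1}\langle b,c,d\rangle$ and to $\langle\beta^{-1}(b),\beta^{-1}(c),\beta^{-1}(d)\rangle$ and using injectivity of $\beta$ together with condition (2) of the definition gives $\beta^{-1}\langle b,c,d\rangle=\langle\beta^{-1}(b),\beta^{-1}(c),\beta^{-1}(d)\rangle$.

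For part (1), I would rename the dummy variables in \eqref{eq6} by setting $a'=\alpha^{2}(a)$, $b'=\beta(\alpha(b))$ and $c'=\beta^{2}(c)$. Because $\alpha^{2}$, $\beta\alpha$ and $\beta^{2}$ are bijections on $T$, every triple $(a',b',c')\in T^{3}$ is obtained in this way. The left-hand side of \eqref{eq6} becomes $\beta(a')\bullet(b'\bullet c')$. Using $\alpha\beta=\beta\alpha$, the right-hand side $(\alpha^{2}(a)\bullet\beta(\alpha(b)))\bullet\beta^{2}(\alpha(c))=(a'\bullet b')\bullet\alpha(\beta^{2}(c))=(a'\bullet b')\bullet\alpha(c')$. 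This yields (1). For part (2), I substitute $c\mapsto\beta(\alpha^{-1}(c))$ in (1); the right-hand side becomes $(a\bullet b)\bullet\alpha(\beta(\alpha^{-1}(c)))=(a\bullet b)\bullet\beta(c)$ since $\alpha\beta\alpha^{-1}=\beta$, and the left-hand side is exactly $\beta(a)\bullet(b\bullet\beta(\alpha^{-1}(c)))$.

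For part (3), I would substitute $a\mapsto\alpha^{-1}(a)$ and $b,c,d\mapsto\beta^{-1}(b),\beta^{-1}(c),\beta^{-1}(d)$ in \eqref{eq7}. The left-hand side transforms as $\alpha^{2}(\alpha^{-1}(a))\bullet\beta\langle\beta^{-1}(b),\beta^{-1}(c),\beta^{-1}(d)\rangle=\alpha(a)\bullet\beta(\beta^{-1}\langle b,c,d\rangle)=\alpha(a)\bullet\langle b,c,d\rangle$, where the first equality uses the heap-morphism property of $\beta^{-1}$. The right-hand side collapses to $\langle a\bullet b,a\bullet c,a\bullet d\rangle$. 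Part (4) is entirely parallel: substitute $a,b,c\mapsto\alpha^{-1}(-)$ and $d\mapsto\beta^{-1}(d)$ in \eqref{eq8}, use that $\alpha^{-1}$ is a heap morphism to pull out $\alpha$, and simplify $\alpha\beta\beta^{-1}(d)=\alpha(d)$ on the left.

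There is no genuinely hard step here; the proof is really a bookkeeping exercise in undoing the twists that distinguish a type $(2)$ truss from a type $(0)$ truss. The only mild subtlety to watch is that the substitutions in each part be made simultaneously and consistently, and that the commutativity $\alpha\beta=\beta\alpha$ is invoked at precisely the right places so that factors such as $\alpha(\beta^{2}(c))$ can be rewritten as $\beta^{2}(\alpha(c))$ (or $\alpha(c')$). Once this is done, items (1)--(4) will be recognized as the Hom-associativity and left/right distributivity laws of a Hom-truss of type $(0)$, which, in view of Note~\ref{Note 4.10}, confirms that every Hom-truss of type $(2)$ yields underlying type-$(0)$ identities after twisting.
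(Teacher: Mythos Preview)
Your argument is correct. The paper states this lemma without proof, so there is no authorial argument to compare against; your substitution method---inverting the bijections $\alpha^{2}$, $\beta\alpha$, $\beta^{2}$ in \eqref{eq6} and $\alpha$, $\beta$ entrywise in \eqref{eq7}--\eqref{eq8}---is exactly the natural way to establish the four identities, and each step checks out.

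One minor expository slip: your closing sentence claims that items (1)--(4) are ``the Hom-associativity and left/right distributivity laws of a Hom-truss of type $(0)$.'' Items (3) and (4) are indeed the untwisted distributive laws of Definition~\ref{de1}, but item (1) reads $\beta(a)\bullet(b\bullet c)=(a\bullet b)\bullet\alpha(c)$, with \emph{different} twisting maps on the two sides; this is not the type-$(0)$ Hom-associativity unless $\alpha=\beta$. That is in fact precisely how the paper uses this lemma---to prove Corollary~\ref{Coro 4.16} in the $\alpha=\beta$ case---so your remark should be qualified accordingly rather than invoking Note~\ref{Note 4.10} directly.
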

\begin{proof}
For $(1)$ and $(2)$, use Property~$(4)$ in the definition of a Hom-truss of type~$(2)$. For $(3)$ and $(4)$, use Property~$(5)$.
\end{proof}

 From the above two lemmas, we obtain the following simple corollary:
\begin{Coro} \label{Coro 4.16}
An $\alpha$-Hom-truss of type $(1)$ is equivalent to an $\alpha$-Hom-truss of type $(2)$.
\end{Coro}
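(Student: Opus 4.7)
The plan is to fix $\beta = \alpha$ in both definitions and check the axioms one by one; the two structures already share identical heap-side data (an abelian Hom-heap $(T,\langle-,-,-\rangle,\alpha)$ on which $\alpha$ is a bijective, multiplicative heap endomorphism), so the equivalence reduces to comparing the three axioms coupling $\bullet$ with $\alpha$ and $\langle-,-,-\rangle$: one associativity-type identity and two Hom-distributive laws. The two preceding lemmas already package essentially all of the required computations, so the corollary should follow by specialization.

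For the forward direction (type~$(1)$ $\Rightarrow$ type~$(2)$), items~(3) and~(4) of the preceding lemma on type~$(1)$ Hom-trusses coincide verbatim with the Hom-distributive identities (\ref{eq7}) and (\ref{eq8}), so those two axioms transfer immediately (in fact without needing $\beta = \alpha$). For the twisted associativity axiom (\ref{eq6}), items~(1) and~(2) of that lemma rewrite its two sides respectively as $\alpha(\alpha(a)\bullet\beta(b))\bullet\beta^{3}(c)$ and $(\beta\circ\alpha)(\alpha(a)\bullet(b\bullet c))$. Setting $\beta=\alpha$ and pushing $\alpha$ through $\bullet$ via multiplicativity, both sides collapse to
\[
\alpha^{3}(a)\bullet(\alpha^{2}(b)\bullet\alpha^{2}(c)) = (\alpha^{2}(a)\bullet\alpha^{2}(b))\bullet\alpha^{3}(c),
\]
which is exactly Hom-associativity (\ref{eq1}) evaluated at $\alpha^{2}(a),\alpha^{2}(b),\alpha^{2}(c)$ and is therefore guaranteed by hypothesis.

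The backward direction (type~$(2)$ $\Rightarrow$ type~$(1)$) is even cleaner, since the preceding lemma on type~$(2)$ Hom-trusses is stated without any $\beta = \alpha$ restriction: item~(1) gives $\beta(a)\bullet(b\bullet c) = (a\bullet b)\bullet\alpha(c)$, which becomes Hom-associativity (\ref{eq1}) under $\beta = \alpha$, and items~(3) and~(4) coincide verbatim with the Hom-distributive identities (\ref{eq2}) and (\ref{eq3}). Together with the shared Hom-heap structure and multiplicativity of $\alpha$, this delivers every clause of the type~$(1)$ definition.

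The only nontrivial step is the forward-direction recognition that the elaborate-looking identity (\ref{eq6}) reduces, under $\beta = \alpha$ together with multiplicativity, to a higher-iterate instance of (\ref{eq1}); I expect this bookkeeping to be the main (but entirely routine) technical point. Bijectivity of $\alpha$ is not essential to the core implications, though it is available if one wants to strip outer $\alpha$'s when comparing expressions.
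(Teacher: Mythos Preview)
Your proposal is correct and follows essentially the same approach as the paper, which simply states that the corollary follows ``from the above two lemmas'' without further detail. You have merely spelled out explicitly how the lemma items match up with the required axioms under the specialization $\beta=\alpha$, including the one genuinely computational step (reducing the twisted associativity identity~(\ref{eq6}) to an instance of~(\ref{eq1}) at $\alpha^{2}(a),\alpha^{2}(b),\alpha^{2}(c)$), so your write-up is a faithful expansion of the paper's intended argument.
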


From Corollary~\ref{Coro 4.16} and Note~\ref{Note 4.10}, we obtain the relationship between different types of Hom-trusses as follows:  
\[
\begin{tikzcd}
	{\textbf{Hom-truss of type (1)}} 
	\arrow[<->, "{\alpha=\beta}"]{rrrr} 
	&&&& {\textbf{Hom-truss of type (2)}} \\
	\\
	\\
	&& {\textbf{Hom-truss of type (0)}}
	\arrow["{\alpha=id}"', from=1-1, to=4-3]
	\arrow["{\alpha=\beta=id}", from=1-5, to=4-3]
\end{tikzcd}
\]
 Now we discuss how, given a truss, one can construct a Hom-truss, and conversely.  

\begin{prop}\label{Prop2}
Let $(T,\langle-,-,-\rangle,\bullet)$ be a truss, and let $\alpha, \beta : T \to T$ be two commuting automorphisms of the truss.  
Define a new ternary operation $\langle-,-,-\rangle_{\ast} : T \times T \times T \to T$ and a new binary multiplication $\bullet_{\ast} : T \times T \to T$ by  
\[
\langle a,b,c\rangle_{\ast} = \alpha\langle a,b,c\rangle = \langle\alpha(a),\alpha(b),\alpha(c)\rangle,
\]
and  
\[
a \bullet_{\ast} b = \beta(a \bullet b) = \beta(a) \bullet \beta(b),
\]
for all $a,b,c \in T$. Then $(T, \langle-,-,-\rangle_{\ast}, \bullet_{\ast}, \alpha, \beta)$ is a Hom-truss of type $(1)$.
\end{prop}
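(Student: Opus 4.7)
\medskip
\noindent\textbf{Proof proposal.} The plan is to verify each of the five axioms for a Hom-truss of type $(1)$ in turn, using only that $\alpha,\beta$ are mutually commuting truss automorphisms. The guiding idea is that conjugating the heap structure by $\alpha$ produces a Hom-heap whose twisting map is exactly $\alpha$ (as in the earlier example $\langle x,y,z\rangle_\alpha=\alpha\langle x,y,z\rangle$), while conjugating the multiplication by $\beta$ produces a $\beta$-twisted associative product in the style of $a\bullet_\alpha b=\alpha(a\bullet b)$; the hypothesis that $\alpha$ and $\beta$ commute and are both truss automorphisms is what glues these two twists into a single structure.

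First I would check axiom $(1)$: $(T,\langle-,-,-\rangle_\ast,\alpha)$ is an abelian Hom-heap. Since $\alpha$ is a heap automorphism, compatibility of $\alpha$ with $\langle-,-,-\rangle_\ast$, the Hom-Mal'cev law, and Hom-associativity all follow directly from the corresponding identities for $\langle-,-,-\rangle$ after pulling the outer $\alpha$'s through; this is essentially the earlier ``twisting a heap by an automorphism gives a Hom-heap'' example. Commutativity descends from commutativity of $\langle-,-,-\rangle$. Next, axiom $(2)$: $\beta\langle a,b,c\rangle_\ast=\beta\alpha\langle a,b,c\rangle=\alpha\beta\langle a,b,c\rangle=\alpha\langle\beta(a),\beta(b),\beta(c)\rangle=\langle\beta(a),\beta(b),\beta(c)\rangle_\ast$, using that $\alpha,\beta$ commute and each is a heap homomorphism. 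The identity $\alpha\circ\beta=\beta\circ\alpha$ is assumed.

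For axiom $(3)$, multiplicativity of $\alpha$ and $\beta$ with respect to $\bullet_\ast$ reduces to $\alpha\bigl(\beta(a\bullet b)\bigr)=\beta\bigl(\alpha(a)\bullet\alpha(b)\bigr)$ and similarly with $\beta$, which follow from commutativity of $\alpha$ and $\beta$ together with their multiplicativity on $\bullet$. For axiom $(4)$, compute $\beta(a)\bullet_\ast(b\bullet_\ast c)=\beta\bigl(\beta(a)\bullet\beta(b\bullet c)\bigr)=\beta^2\bigl(\beta(a)\bullet(b\bullet c)\bigr)$ after pulling $\beta$ through; by classical associativity of $\bullet$ this equals $\beta^2\bigl((a\bullet b)\bullet\beta(c)\bigr)$, which in turn equals $(a\bullet_\ast b)\bullet_\ast\beta(c)$ by the symmetric manipulation.

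Finally, for axiom $(5)$ the left Hom-distributivity is checked as
\[
\alpha(a)\bullet_\ast\langle b,c,d\rangle_\ast
=\beta\bigl(\alpha(a)\bullet\alpha\langle b,c,d\rangle\bigr)
=\beta\alpha\bigl(a\bullet\langle b,c,d\rangle\bigr)
=\beta\alpha\langle a\bullet b,a\bullet c,a\bullet d\rangle,
\]
while
\[
\langle a\bullet_\ast b,a\bullet_\ast c,a\bullet_\ast d\rangle_\ast
=\alpha\langle\beta(a\bullet b),\beta(a\bullet c),\beta(a\bullet d)\rangle
=\alpha\beta\langle a\bullet b,a\bullet c,a\bullet d\rangle,
\]
and these agree because $\alpha\beta=\beta\alpha$. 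The right Hom-distributivity is entirely parallel. Each of these steps is a short manipulation; the only place a subtle error could creep in is in keeping careful track of which factor absorbs which copy of $\alpha$ or $\beta$, so the main (and really only) obstacle is bookkeeping. There is no conceptually hard step once one fixes the convention that $\alpha,\beta$ commute with each other and with both truss operations.
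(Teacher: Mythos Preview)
Your approach is essentially identical to the paper's: both proofs simply run through the five axioms of a Hom-truss of type $(1)$ and verify each by pushing $\alpha$ and $\beta$ through the original truss operations, using commutativity of $\alpha,\beta$ for axioms $(2)$, $(3)$ and $(5)$ and ordinary associativity of $\bullet$ for axiom $(4)$. The only slip is the bookkeeping you yourself flagged: in your axiom $(4)$ computation, $\beta\bigl(\beta(a)\bullet\beta(b\bullet c)\bigr)$ equals $\beta^{2}\bigl(a\bullet(b\bullet c)\bigr)$, not $\beta^{2}\bigl(\beta(a)\bullet(b\bullet c)\bigr)$, and after applying associativity and unwinding you land on $(a\bullet_\ast b)\bullet_\ast\beta(c)$ as desired; once that is corrected the argument goes through exactly as in the paper.
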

\begin{proof}
For all $a,b,c\in T$, we get $\langle a,a,b\rangle_{\ast}=\langle\alpha(a),\alpha(a),\alpha(b)\rangle=\alpha(b)$. Similarly, we can prove that $\alpha(b)=\langle b,a,a\rangle_{\ast}$. Now for all $a,b,c,d,e\in T$, we get 
\begin{align*}
\langle\alpha(a),\alpha(b),\langle c,d,e\rangle_{\ast}\rangle_{\ast}&=\langle\alpha(a),\alpha(b),\langle\alpha(c),\alpha(d),\alpha(e)\rangle\rangle_{\ast}\\
&=\alpha\langle\alpha(a),\alpha(b),\langle\alpha(c),\alpha(d),\alpha(e)\rangle\rangle\\
&=\alpha\langle\langle\alpha(a),\alpha(b),\alpha(c)\rangle,\alpha(d),\alpha(e)\rangle\\
&=\langle\langle\alpha(a),\alpha(b),\alpha(c)\rangle,\alpha(d),\alpha(e)\rangle_{\ast}\\
&=\langle\langle a,b,c\rangle_{\ast},\alpha(d),\alpha(e)\rangle_{\ast}.
\end{align*}
Also, we get $\alpha\langle a,b,c\rangle_{\ast}=\alpha\circ\alpha\langle a,b,c\rangle=\langle\alpha(a),\alpha(b),\alpha(c)\rangle_{\ast}$. Therefore, $(T,\langle-,-,-\rangle_{\ast},\alpha)$ is an abelian Hom-heap.
Now, we get
\begin{align*}
\beta\langle a,b,c\rangle_{\ast}&=\beta\circ\alpha\langle a,b,c\rangle\\
&=\alpha\circ\beta\langle a,b,c\rangle\\
&=\alpha\langle\beta(a),\beta(b),\beta(c)\rangle\\
&=\langle\beta(a),\beta(b),\beta(c)\rangle_{\ast}.
\end{align*}
We also have
\begin{align*}
&\alpha(a\bullet_{\ast}b)=\alpha\circ\beta(a\bullet b)=\beta\circ\alpha(a\bullet b)=\beta(\alpha(a)\bullet\alpha(b))=\alpha(a)\bullet_{\ast}\alpha(b).\\
&\beta(a\bullet_{\ast}b)=\beta\circ\beta(a\bullet b)=\beta(a)\bullet_{\ast}\beta(b).
\end{align*}
\begin{align*}
\beta(a)\bullet_{\ast}(b\bullet_{\ast}c)&=\beta(a)\bullet_{\ast}\beta(a\bullet b)\\
&=\beta(\beta(a)\bullet(\beta(b)\bullet \beta(c)))\\
&=\beta((\beta(a)\bullet\beta(b))\bullet\beta(c))\\
&=\beta\big((a\bullet_{\ast}b)\bullet\beta(c)\big)\\
&=(a\bullet_{\ast} b)\bullet_{\ast}\beta(c).
\end{align*}
\begin{align*}
\alpha(a)\bullet_{\ast}\langle b,c,d\rangle_{\ast}&=\alpha(a)\bullet_{\ast}\alpha\langle b,c,d\rangle\\
&=\beta(\alpha(a)\bullet\alpha\langle b,c,d\rangle)\\
&=\beta\circ\alpha(a\bullet\langle b,c,d\rangle)\\
&=\alpha\circ\beta(\langle a\bullet b,a\bullet c,a\bullet d\rangle)\\
&=\alpha\langle\beta(a\bullet b),\beta(a\bullet c),\beta(a\bullet d)\rangle\\
&=\alpha\langle a\bullet_{\ast}b,a\bullet_{\ast}c,a\bullet_{\ast}d\rangle\\
&=\langle a\bullet_{\ast}b,a\bullet_{\ast}c,a\bullet_{\ast}d\rangle_{\ast}.
\end{align*}
Similarly, we can prove that $\langle a,b,c\rangle_{\ast}\bullet_{\ast}\alpha(d)=\langle a\bullet_{\ast}d,b\bullet_{\ast}d,c\bullet_{\ast}d\rangle_{\ast}$. Hence, $(T,\langle-,-,-\rangle_{\ast},\bullet_{\ast},\alpha,\beta)$ is a Hom-truss of type $(1)$.
 \end{proof}
 
\begin{prop}
Let $(T,\langle-,-,-\rangle,\bullet,\alpha,\beta)$ be a Hom-truss of type~$(1)$.  
Define a new ternary operation $\langle-,-,-\rangle_{\ast}:T\times T\times T\rightarrow T$ and a new binary operation $\bullet_{\ast}:T\times T\rightarrow T$ by
\[
\langle a,b,c\rangle_{\ast}=\alpha^{-1}\langle a,b,c\rangle
=\langle\alpha^{-1}(a),\alpha^{-1}(b),\alpha^{-1}(c)\rangle,
\]
and 
\[
a\bullet_{\ast}b=\beta^{-1}(a\bullet b)
=\beta^{-1}(a)\bullet \beta^{-1}(b),
\]
for all $a,b,c\in T$.  
Then $(T,\langle-,-,-\rangle_{\ast},\bullet_{\ast})$ is a truss.
\end{prop}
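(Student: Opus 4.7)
The plan is to verify the three defining axioms of a truss for $(T, \langle-,-,-\rangle_{\ast}, \bullet_{\ast})$, using the twists $\alpha^{-1}$ and $\beta^{-1}$ to strip the Hom-data off the original Hom-truss. The strategy essentially reverses the construction of Proposition~\ref{Prop2}: wherever a Hom-truss axiom introduces a factor of $\alpha$ or $\beta$ on one side, the corresponding inverse twists built into $\langle-,-,-\rangle_{\ast}$ and $\bullet_{\ast}$ should absorb it. I will use throughout that $\alpha^{-1}$ and $\beta^{-1}$ are again multiplicative and preserve $\langle-,-,-\rangle$, and that $\alpha^{-1}\circ\beta^{-1}=\beta^{-1}\circ\alpha^{-1}$.

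First I would check that $(T, \langle-,-,-\rangle_{\ast})$ is an abelian heap. The Mal'tsev identity is immediate from the Hom-Mal'tsev property, since $\langle a,a,b\rangle_{\ast} = \alpha^{-1}\langle a,a,b\rangle = \alpha^{-1}(\alpha(b)) = b$ and symmetrically $\langle b,a,a\rangle_{\ast}=b$, while the abelian property descends directly from that of the Hom-heap $(T,\langle-,-,-\rangle,\alpha)$. For ternary associativity I would rewrite both sides using the alternative form $\langle a,b,c\rangle_{\ast} = \langle\alpha^{-1}(a),\alpha^{-1}(b),\alpha^{-1}(c)\rangle$; both sides become doubly-nested expressions in the original ternary operation with $\alpha^{-1}$- and $\alpha^{-2}$-shifted arguments, and a single application of Proposition~\ref{Prop1} then aligns them.

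For associativity of $\bullet_{\ast}$, expanding $(a\bullet_{\ast}b)\bullet_{\ast}c$ via multiplicativity of $\beta^{-1}$ gives $(\beta^{-2}(a)\bullet\beta^{-2}(b))\bullet\beta^{-1}(c)$; rewriting $\beta^{-1}(c)=\beta(\beta^{-2}(c))$ and invoking the Hom-associativity~\eqref{eq1} yields $\beta^{-1}(a)\bullet(\beta^{-2}(b)\bullet\beta^{-2}(c)) = a\bullet_{\ast}(b\bullet_{\ast}c)$. For left distributivity I would compute
\[
\langle a\bullet_{\ast}b,\, a\bullet_{\ast}c,\, a\bullet_{\ast}d\rangle_{\ast}
= \alpha^{-1}\langle \beta^{-1}(a)\bullet\beta^{-1}(b),\, \beta^{-1}(a)\bullet\beta^{-1}(c),\, \beta^{-1}(a)\bullet\beta^{-1}(d)\rangle,
\]
recognize the inner bracket as the right-hand side of~\eqref{eq2} applied at $x=\beta^{-1}(a)$ and $y,z,w=\beta^{-1}(b),\beta^{-1}(c),\beta^{-1}(d)$, and then push $\alpha^{-1}$ through the resulting product via multiplicativity of $\alpha$ together with $\alpha^{-1}\beta^{-1}=\beta^{-1}\alpha^{-1}$; the expression collapses to $a\bullet_{\ast}\langle b,c,d\rangle_{\ast}$. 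Right distributivity is obtained symmetrically using~\eqref{eq3}.

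The main obstacle is purely bookkeeping of twist exponents. Because two commuting twists $\alpha$ and $\beta$ interact simultaneously with a ternary and a binary operation, at several intermediate steps one must commute $\alpha^{\pm 1}$ past $\beta^{\pm 1}$ and push them through both $\bullet$ and $\langle-,-,-\rangle$. The multiplicativity of both $\alpha$ and $\beta$, combined with the relation $\alpha\circ\beta=\beta\circ\alpha$ encoded in the definition of a Hom-truss of type $(1)$, is precisely what makes every such cancellation work and forces the construction to land in the classical, un-twisted truss setting.
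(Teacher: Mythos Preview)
Your proposal is correct and follows essentially the same approach as the paper: untwist the Hom-data with $\alpha^{-1}$ and $\beta^{-1}$ and verify the truss axioms directly. The paper's proof is more terse---it declares the abelian-heap and associativity checks ``easy'' and only writes out left distributivity, starting from $a\bullet_{\ast}\langle b,c,d\rangle_{\ast}$ rather than from $\langle a\bullet_{\ast}b,\,a\bullet_{\ast}c,\,a\bullet_{\ast}d\rangle_{\ast}$ as you do---but the substance is the same, and your explicit handling of ternary associativity via Proposition~\ref{Prop1} and of $\bullet_{\ast}$-associativity via~\eqref{eq1} fills in exactly the details the paper suppresses.
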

\begin{proof}
It is easy to prove that $(T,\langle-,-,-\rangle_{\ast})$ is an abelian heap and $\bullet_{\ast}$ is a binary operation. Only need to verify that truss distributive property holds. For all a,b,c,d $\in T$, we get
\begin{align*}
a\bullet_{\ast}\langle b,c,d\rangle_{\ast}&=a\bullet_{\ast}\langle\alpha^{-1}(b),\alpha^{-1}(c),\alpha^{-1}(d)\rangle\\
&=\beta^{-1}(a\bullet \langle\alpha^{-1}(b),\alpha^{-1}(c),\alpha^{-1}(d)\rangle)\\
&=\beta^{-1}(\alpha(\alpha^{-1}(a))\bullet \langle\alpha^{-1}(b),\alpha^{-1}(c),\alpha^{-1}(d)\rangle)\\
&=\beta^{-1}\langle \alpha^{-1}(a)\bullet\alpha^{-1}(b),\alpha^{-1}(a)\bullet\alpha^{-1}(c),\alpha^{-1}(a)\bullet\alpha^{-1}(d)\rangle\\
&=\langle\beta^{-1}( \alpha^{-1}(a)\bullet\alpha^{-1}(b)),\beta^{-1}(\alpha^{-1}(a)\bullet\alpha^{-1}(c)),\beta^{-1}(\alpha^{-1}(a)\bullet\alpha^{-1}(d))\rangle\\
&=\langle \alpha^{-1}(a)\bullet_{\ast}\alpha^{-1}(b),\alpha^{-1}(a)\bullet_{\ast}\alpha^{-1}(c),\alpha^{-1}(a)\bullet_{\ast}\alpha^{-1}(d)\rangle\\
&=\langle \alpha^{-1}(a\bullet_{\ast}b),\alpha^{-1}(a\bullet_{\ast}c),\alpha^{-1}(a\bullet_{\ast}d)\rangle\\
&=\langle a\bullet_{\ast}b,a\bullet_{\ast}c,a\bullet_{\ast}d\rangle_\ast.
\end{align*}
Similarly, we can prove that right distributive law.
\end{proof}
\begin{prop}
Let $(T,\langle-,-,-\rangle,\bullet)$ be a truss and let $\alpha,\beta: T\rightarrow T$ be two commuting automorphisms of the truss.  
Define a new ternary operation $\langle-,-,-\rangle_{\ast}:T\times T\times T\rightarrow T$ and a new binary operation $\bullet_{\ast}:T\times T\rightarrow T$ by
\[
\langle a,b,c\rangle_{\ast}=\alpha\langle a,b,c\rangle
=\langle\alpha(a),\alpha(b),\alpha(c)\rangle,
\]
and
\[
a\bullet_{\ast}b=\beta(a)\bullet\beta(b),
\]
for all $a,b\in T$.  
Then $(T,\langle-,-,-\rangle_{\ast},\bullet_{\ast},\alpha,\beta)$ is a Hom-truss of type~$(2)$.
\end{prop}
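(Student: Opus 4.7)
The plan is to verify the five defining axioms of a Hom-truss of type~$(2)$ for $(T,\langle-,-,-\rangle_{\ast},\bullet_{\ast},\alpha,\beta)$ by reducing each one to the original truss axioms on $(T,\langle-,-,-\rangle,\bullet)$ together with the hypotheses that $\alpha$ and $\beta$ are commuting truss automorphisms. Since the ternary operation $\langle-,-,-\rangle_{\ast}=\alpha\langle-,-,-\rangle$ is the same one used in Proposition~\ref{Prop2}, axioms (1) and (2) --- that $(T,\langle-,-,-\rangle_{\ast},\alpha)$ is an abelian Hom-heap and $\beta$ is a Hom-heap endomorphism commuting with $\alpha$ --- are already contained verbatim in that proof. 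Axiom (3), the multiplicativity of $\alpha$ and $\beta$ with respect to $\bullet_{\ast}$, is a short check using the commutativity relation $\alpha\beta=\beta\alpha$ and the fact that $\alpha$ and $\beta$ are multiplicative in the original truss: for example, $\alpha(a\bullet_{\ast}b)=\alpha(\beta(a)\bullet\beta(b))=\alpha\beta(a)\bullet\alpha\beta(b)=\beta\alpha(a)\bullet\beta\alpha(b)=\alpha(a)\bullet_{\ast}\alpha(b)$, with a similar (in fact simpler) calculation for $\beta$.

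For the two Hom-distributive laws~\eqref{eq7} and~\eqref{eq8}, I would expand each side by substituting the definitions $a\bullet_{\ast}b=\beta(a)\bullet\beta(b)$ and $\langle-,-,-\rangle_{\ast}=\alpha\langle-,-,-\rangle$, and then pull the outer $\alpha$'s and $\beta$'s through the ternary operation using commutativity and the truss-morphism properties. After these manipulations, each side collapses to an instance of the ordinary left (respectively right) distributive law of the truss applied to transformed elements of the form $\beta\alpha^{2}(a)$, $\alpha\beta^{2}(b)$, and so on, from which the required identity follows in a single step.

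The main obstacle is the twisted Hom-associativity~\eqref{eq6}. Unfolding $\bullet_{\ast}$ twice on both sides produces expressions of the form $(\beta^{2}\alpha^{2}(a)\bullet\beta^{3}\alpha(b))\bullet X$, where $X$ is a composition of powers of $\alpha$ and $\beta$ applied to $c$ coming from the nested $\beta$'s that $\bullet_{\ast}$ introduces at each application. Reconciling the two sides requires delicate bookkeeping of the exponents of $\alpha$ and $\beta$ sitting on the third factor, together with full use of $\alpha\beta=\beta\alpha$, the multiplicativity of both maps, and the ordinary associativity of $\bullet$ in the underlying truss. This is the computational heart of the argument, and the step I would verify most carefully before claiming the whole proposition.
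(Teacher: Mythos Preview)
Your overall strategy---reduce each axiom to the underlying truss axioms plus the automorphism properties---is the right one, and it matches the spirit of the paper's terse ``Similar to the above''. Axioms (1)--(3), (\ref{eq7}) and (\ref{eq8}) do go through exactly as you outline. However, your instinct that (\ref{eq6}) is the dangerous step is well founded: with the operation \emph{as written}, $a\bullet_{\ast}b=\beta(a)\bullet\beta(b)$, the twisted associativity (\ref{eq6}) actually \emph{fails} whenever $\alpha\neq\beta$. Expanding both sides and using ordinary associativity of $\bullet$ gives
\[
\text{LHS}=(\beta^{2}\alpha^{2}(a)\bullet\beta^{3}\alpha(b))\bullet\beta^{4}(c),
\qquad
\text{RHS}=(\beta^{2}\alpha^{2}(a)\bullet\beta^{3}\alpha(b))\bullet\beta^{3}\alpha(c),
\]
so equality forces $\beta^{4}=\beta^{3}\alpha$, i.e.\ $\alpha=\beta$. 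No amount of bookkeeping will close this gap; the proposition, with this $\bullet_{\ast}$, is simply false in general.

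The fix is suggested by the very next proposition in the paper, whose inverse construction uses the \emph{asymmetric} product $a\bullet_{\ast}b=\beta^{-1}(a)\bullet\alpha^{-1}(b)$. The forward construction should therefore be $a\bullet_{\ast}b=\beta(a)\bullet\alpha(b)$. With this definition your plan succeeds verbatim: the multiplicativity checks in axiom (3) still work (using $\alpha\beta=\beta\alpha$), the distributive laws (\ref{eq7})--(\ref{eq8}) reduce to the truss laws as before, and for (\ref{eq6}) both sides now unwind to $(\beta^{2}\alpha^{2}(a)\bullet\alpha^{2}\beta^{2}(b))\bullet\alpha^{2}\beta^{2}(c)$. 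So the gap is not in your method but in the stated formula for $\bullet_{\ast}$; once that is corrected, your outline is a complete proof.
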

\begin{proof}
Similar to the above.
\end{proof}
\begin{prop}
Let $(T,\langle-,-,-\rangle,\bullet,\alpha,\beta)$ be a Hom-truss of type $(2)$.  
Define a new ternary operation $\langle-,-,-\rangle_{\ast} : T \times T \times T \to T$  
and a new binary operation $\bullet_{\ast} : T \times T \to T$ by  
\[
\langle a,b,c\rangle_{\ast} = \alpha^{-1}\langle a,b,c\rangle 
= \langle \alpha^{-1}(a),\alpha^{-1}(b),\alpha^{-1}(c) \rangle,
\]
and  
\[
a \bullet_{\ast} b = \beta^{-1}(a) \bullet \alpha^{-1}(b),
\]
for all $a,b \in T$. Then the structure $(T,\langle-,-,-\rangle_{\ast},\bullet_{\ast})$ is a truss.
\end{prop}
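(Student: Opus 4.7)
The plan is to verify the three truss axioms for $(T,\langle-,-,-\rangle_{\ast},\bullet_{\ast})$ directly by untwisting the Hom-axioms of the type~$(2)$ structure. Throughout I will freely use that $\alpha,\beta$ are commuting bijective multiplicative maps, hence so are $\alpha^{-1},\beta^{-1}$, and that $\alpha^{-1}$ is an endomorphism of the underlying Hom-heap (which is immediate by applying $\alpha^{-1}$ to both sides of the compatibility identity).

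First I would show that $(T,\langle-,-,-\rangle_{\ast})$ is an abelian heap. The Mal'cev identity follows immediately: $\langle a,a,b\rangle_{\ast}=\langle\alpha^{-1}(a),\alpha^{-1}(a),\alpha^{-1}(b)\rangle=\alpha(\alpha^{-1}(b))=b$, and similarly for $\langle b,a,a\rangle_{\ast}$. For ternary associativity $\langle\langle a,b,c\rangle_{\ast},d,e\rangle_{\ast}=\langle a,b,\langle c,d,e\rangle_{\ast}\rangle_{\ast}$, expand each side using the formula $\langle -,-,-\rangle_{\ast}=\langle\alpha^{-1}(-),\alpha^{-1}(-),\alpha^{-1}(-)\rangle$, reducing the equation to an identity of the form $\langle\langle\alpha^{-2}(a),\alpha^{-2}(b),\alpha^{-2}(c)\rangle,\alpha^{-1}(d),\alpha^{-1}(e)\rangle=\langle\alpha^{-1}(a),\alpha^{-1}(b),\langle\alpha^{-2}(c),\alpha^{-2}(d),\alpha^{-2}(e)\rangle\rangle$, which is exactly the Hom-associativity axiom applied with $y=\alpha^{-2}(d)$, $z=\alpha^{-2}(e)$. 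Abelianness of $\langle-,-,-\rangle_{\ast}$ is inherited from that of $\langle-,-,-\rangle$.

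Next I would verify associativity of $\bullet_{\ast}$. Direct expansion gives
\[
(a\bullet_{\ast} b)\bullet_{\ast} c=(\beta^{-2}(a)\bullet\beta^{-1}\alpha^{-1}(b))\bullet\alpha^{-1}(c),
\]
\[
a\bullet_{\ast}(b\bullet_{\ast} c)=\beta^{-1}(a)\bullet(\beta^{-1}\alpha^{-1}(b)\bullet\alpha^{-2}(c)),
\]
where I used multiplicativity of $\alpha$ and $\beta$ to push them inside. To identify these two expressions I would invoke the Hom-associativity identity \eqref{eq6} with the substitutions $x=\alpha^{-2}\beta^{-2}(a)$, $y=\alpha^{-2}\beta^{-2}(b)$, $z=\alpha^{-2}\beta^{-2}(c)$; a short bookkeeping computation, using the commutation $\alpha\beta=\beta\alpha$ at the very last step, matches both sides.

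Finally I would check the two distributive laws. For left distributivity, expanding gives $a\bullet_{\ast}\langle b,c,d\rangle_{\ast}=\beta^{-1}(a)\bullet\langle\alpha^{-2}(b),\alpha^{-2}(c),\alpha^{-2}(d)\rangle$, and applying \eqref{eq7} with $x=\alpha^{-2}\beta^{-1}(a)$ and $y,z,w$ absorbing a $\beta^{-1}$ identifies this with $\langle\alpha^{-1}\beta^{-1}(a)\bullet\alpha^{-2}(b),\alpha^{-1}\beta^{-1}(a)\bullet\alpha^{-2}(c),\alpha^{-1}\beta^{-1}(a)\bullet\alpha^{-2}(d)\rangle=\langle a\bullet_{\ast} b,a\bullet_{\ast} c,a\bullet_{\ast} d\rangle_{\ast}$, the last equality being another direct expansion. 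Right distributivity is analogous using \eqref{eq8}.

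The main obstacle will not be conceptual but purely bookkeeping: one must carefully track the $\alpha^{-1}$ and $\beta^{-1}$ twists, ensuring that the chosen substitutions in the Hom-associativity and Hom-distributivity identities land on exactly the form appearing after expansion. The commutation $\alpha\beta=\beta\alpha$ and the multiplicativity of $\alpha$ and $\beta$ (together with their inverses) are the crucial ingredients that make the twists cancel cleanly.
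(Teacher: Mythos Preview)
Your proposal is correct and follows exactly the approach the paper indicates: the paper's own proof is simply ``Similar to the above,'' referring to the direct untwisting verification given for the type~$(1)$ analogue, and your detailed bookkeeping with $\alpha^{-1},\beta^{-1}$ and the substitutions into \eqref{eq6}, \eqref{eq7}, \eqref{eq8} is precisely that argument carried out in full.
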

\begin{proof}
Similar to the above.
\end{proof}
\begin{Rem}
Here, we first show that starting from a truss, passing to a Hom-truss of type~1, and then applying the inverse construction recovers the original truss.
By Proposition~4.15, every truss
\[
(T,\langle-,-,-\rangle,\bullet)
\]
induces a Hom-truss
\[
(T,\langle-,-,-\rangle_{\ast},\bullet_{\ast},\alpha,\beta),
\]
where
\[
\langle a,b,c\rangle_{\ast}
=\langle\alpha(a),\alpha(b),\alpha(c)\rangle,\qquad
a\bullet_{\ast}b=\beta(a)\bullet\beta(b),
\]
and $\alpha$ and $\beta$ are commuting automorphisms of the truss.

Conversely, by Proposition~4.16, the Hom-truss $(T,\langle-,-,-\rangle_{\ast},\bullet_{\ast},\alpha,\beta)$ induces the truss $(T,\langle-,-,-\rangle_{T},\bullet_{T})$,

where
\[
\langle a,b,c\rangle_{T}
=
\bigl\langle
\alpha^{-1}(a),
\alpha^{-1}(b),
\alpha^{-1}(c)
\bigr\rangle_{\ast},
\]
and
\[
a\bullet_{T} b
=
\beta^{-1}(a)\bullet_{\ast}\beta^{-1}(b).
\]

Now,
\begin{align*}
\langle a,b,c\rangle_{T}
&=
\langle
\alpha^{-1}(a),
\alpha^{-1}(b),
\alpha^{-1}(c)
\rangle_{\ast}\\
&=
\langle
\alpha\alpha^{-1}(a),
\alpha\alpha^{-1}(b),
\alpha\alpha^{-1}(c)
\rangle\\
&=
\langle a,b,c\rangle,
\end{align*}
and
\begin{align*}
a\bullet_{T} b
&=
\beta^{-1}(a)\bullet_{\ast}\beta^{-1}(b)\\
&=
\beta\beta^{-1}(a)\bullet\beta\beta^{-1}(b)\\
&=
a\bullet b.
\end{align*}

Hence, starting from a truss, passing to a Hom-truss of type~1, and then applying the inverse construction recovers the original truss.

Similarly, starting from a truss, passing to a Hom-truss of type~2 (respectively, type~0), and then applying the corresponding inverse construction recovers the original truss.

\end{Rem}
\section{Hom-Brace}
Now we define Hom-braces of different types.

\begin{Def}
A set $B$ together with two binary operations $\star$, $\bullet$ and a bijective map $\alpha:B\rightarrow B$ is called a \emph{Hom-skew left brace of type $(0)$} if, for all $a,b,c\in B$, the following conditions hold:
\begin{enumerate}
\item[(i)] $(B,\star)$ is a group.
\item[(ii)] $(B,\bullet,\alpha)$ is a Hom-group.
\item[(iii)] $\alpha(a\star b)=\alpha(a)\star \alpha(b).$
\item[(iv)] $a\bullet(b\star c)=(a\bullet b)\star a^{\star}\star(a\bullet c),$
\end{enumerate}
where $a^{\star}$ denotes the inverse of $a$ with respect to $\star$.
\end{Def}

If $(B,\star)$ is an abelian group, then a Hom-skew left brace is simply called a \emph{Hom-left brace}. Similarly, a \emph{Hom-skew right brace} is defined using the right distributive condition.

\begin{Def}\label{def6}
A set $B$ together with two binary operations $\star$, $\bullet$ and a bijective map $\alpha:B\rightarrow B$ is called a \emph{Hom-brace of type $(0)$} if, for all $a,b,c\in B$, the following conditions hold:
\begin{enumerate}
\item[(i)] $(B,\star)$ is an abelian group.
\item[(ii)] $(B,\bullet,\alpha)$ is a Hom-group.
\item[(iii)] $\alpha(a\star b)=\alpha(a)\star \alpha(b).$
\item[(iv)] $a\bullet(b\star c)=(a\bullet b)\star a^{\star}\star(a\bullet c),$ \\
 $(b\star c)\bullet a=(b\bullet a)\star a^{\star}\star(c\bullet a),$
\end{enumerate}
where $a^{\star}$ denotes the inverse of $a$ with respect to $\star$.
\end{Def}

Now we define Hom-braces over Hom-groups.  

\begin{Def}
A \emph{Hom-brace of type $(1)$} is a tuple $(B,\star,\bullet,\alpha,\beta)$ consisting of a set $B$ together with two binary operations $\star,\bullet$ and two bijective maps $\alpha,\beta:B\rightarrow B$ such that:
\begin{enumerate}
\item[(i)] $(B,\star,\alpha)$ is an abelian Hom-group.
\item[(ii)] $(B,\bullet,\beta)$ is a Hom-group.
\item[(iii)] $\beta$ is an automorphism of the abelian Hom-group $(B,\star,\alpha)$, i.e.,
$$\beta(a\star b)=\beta(a)\star \beta(b), \quad \forall\, a,b\in B,$$
and $\alpha\circ \beta=\beta\circ\alpha$.
\item[(iv)] $\alpha$ is multiplicative, i.e.,
$$\alpha(a\bullet b)=\alpha(a)\bullet \alpha(b).$$
\item[(v)] The binary operation $\bullet$ is Hom-distributive over $\star$ on both sides:
\begin{align*}
\alpha(a)\bullet(b\star c) &= (a\bullet b)\star \{a^{\star}\star(a\bullet c)\},\\
(b\star c)\bullet\alpha(a) &=\{(b\bullet a)\star a^{\star}\}\star(c\bullet a),
\end{align*}
\end{enumerate}
where $a^{\star}$ denotes the inverse of $a$ with respect to $\star$.
\end{Def}

\begin{Rem}\label{Rem5.4}
Note that if $\alpha=\mathrm{id}$, this definition reduces to that of a Hom-brace of type $(0)$ (see Definition~\ref{def6}).  
If $\alpha=\beta$, then a Hom-brace of type $(1)$ is referred to as an \emph{$\alpha$-Hom-brace of type $(1)$}.
\end{Rem}

Next, we define another type of Hom-brace.  

\begin{Def}
A \emph{Hom-brace of type $(2)$} is a tuple $(B,\star,\bullet,\alpha,\beta)$ consisting of a set $B$ together with two binary operations $\star,\bullet$ and two bijective maps $\alpha,\beta:B\rightarrow B$ such that:
\begin{enumerate}
\item[(i)] $(B,\star,\alpha)$ is an abelian Hom-group.
\item[(ii)] $(B,\bullet,\beta)$ is a Hom-group satisfying the following Hom-associativity condition:
\begin{align*}
\beta(\alpha^{2}(a))\bullet\big(\beta(\alpha(b))\bullet \beta^{2}(c)\big) 
= \big(\alpha^{2}(a)\bullet \beta(\alpha(b))\big)\bullet \beta^{2}(\alpha(c)).
\end{align*}
\item[(iii)] $\beta$ is an automorphism of the abelian Hom-group $(B,\star,\alpha)$, i.e.,
$$\beta(a\star b)=\beta(a)\star \beta(b), \quad \forall\, a,b\in B,$$
and $\alpha\circ \beta=\beta\circ\alpha$.
\item[(iv)] $\alpha$ is multiplicative, i.e.,
$$\alpha(a\bullet b)=\alpha(a)\bullet \alpha(b).$$
\item[(v)] The binary operation $\bullet$ is Hom-distributive over $\star$ on both sides:
\begin{align*}
\alpha^{2}(a)\bullet\beta(b\star c) &= (\alpha(a)\bullet\beta(b))\star\{\alpha(a^{\star})\star(\alpha(a)\bullet\beta(c))\},\\
\alpha(b\star c)\bullet \alpha(\beta(a)) &=\{ (\alpha(b)\bullet\beta(a))\star \alpha(a^{\star})\}\star(\alpha(c)\bullet\beta(a)),
\end{align*}
\end{enumerate}
where $a^{\star}$ denotes the inverse of $a$ with respect to $\star$.
\end{Def}

\begin{Def}
Let $(B,\star,\bullet,\alpha,\beta)$ be a Hom-brace of type $(2)$. It is said to be an \emph{$\alpha$-Hom-brace of type $(2)$} if $\alpha=\beta$.
\end{Def}

\begin{lemma}
Let $(B,\star,\bullet,\alpha,\beta)$ be a Hom-brace of type $(1)$. Then, for all $a,b,c \in B$, where $a^{\star}$ denotes the inverse of $a$ with respect to $\star$, the following identities hold:
\begin{enumerate}
\item $\beta(\alpha^{2}(a)) \bullet \bigl(\beta(\alpha(b)) \bullet \beta^{2}(c)\bigr) 
= \alpha(\alpha(a)\bullet \beta(b)) \bullet \beta^{3}(c)$.
\item $\bigl(\alpha^{2}(a)\bullet \beta(\alpha(b))\bigr)\bullet \beta^{2}(\alpha(c)) 
= (\beta \circ \alpha)\bigl(\alpha(a)\bullet (b\bullet c)\bigr)$.
\item $\alpha^{2}(a)\bullet \beta(b\star c) 
= \bigl(\alpha(a)\bullet \beta(b)\bigr)\star\{\bigl(\alpha(a)\bigr)^{\star} \star \bigl(\alpha(a)\bullet \beta(c)\bigr)\}$.
\item $\alpha(b\star c)\bullet \alpha(\beta(a)) 
=\{ \bigl(\alpha(b)\bullet \beta(a)\bigr)\star (\beta(a))^{\star}\} \star \bigl(\alpha(c)\bullet \beta(a)\bigr)$.
\end{enumerate}
\end{lemma}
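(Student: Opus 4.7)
The four identities are direct manipulations in the axioms of a Hom-brace of type $(1)$; no auxiliary lemmas are needed. Identities (1) and (2) involve only the Hom-group structure $(B,\bullet,\beta)$, together with multiplicativity of $\alpha$ on $\bullet$ and the commutation relation $\alpha\circ\beta=\beta\circ\alpha$. Identities (3) and (4) are obtained by substituting twisted inputs into the Hom-distributive laws of axiom (v) and using that both $\alpha$ and $\beta$ are additive homomorphisms for $\star$.

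For (1), I will apply Hom-associativity $\beta(x)\bullet(y\bullet z)=(x\bullet y)\bullet\beta(z)$ with $x=\alpha^{2}(a)$, $y=\beta(\alpha(b))$, $z=\beta^{2}(c)$, which yields $(\alpha^{2}(a)\bullet\beta(\alpha(b)))\bullet\beta^{3}(c)$; using multiplicativity of $\alpha$ and $\alpha\circ\beta=\beta\circ\alpha$, the first factor rewrites as $\alpha(\alpha(a)\bullet\beta(b))$, giving the RHS. For (2), I will first expand $(\beta\circ\alpha)(\alpha(a)\bullet(b\bullet c))$ using multiplicativity of $\beta\circ\alpha$ to obtain $\beta(\alpha^{2}(a))\bullet(\beta(\alpha(b))\bullet\beta(\alpha(c)))$, and then apply the same Hom-associativity in the forward direction to reach $(\alpha^{2}(a)\bullet\beta(\alpha(b)))\bullet\beta^{2}(\alpha(c))$, which matches the LHS.

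For (3) and (4), the plan is to pull $\beta$ and $\alpha$ through $\star$ by their additive multiplicativity (using axiom (iii) and the fact that $\alpha$ is a Hom-heap automorphism of the $\star$-structure), and then invoke the Hom-distributive laws in (v). Specifically, for (3) I will rewrite $\beta(b\star c)=\beta(b)\star\beta(c)$ and then substitute $a\mapsto\alpha(a)$, $b\mapsto\beta(b)$, $c\mapsto\beta(c)$ into the left Hom-distributive law $\alpha(a)\bullet(b\star c)=(a\bullet b)\star\{a^{\star}\star(a\bullet c)\}$; noting $\alpha(\alpha(a))=\alpha^{2}(a)$ and that the $\star$-inverse of $\alpha(a)$ is $(\alpha(a))^{\star}$, the claimed RHS appears. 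For (4), the symmetric move with $b\mapsto\alpha(b)$, $c\mapsto\alpha(c)$, $a\mapsto\beta(a)$ in the right Hom-distributive law does the job, after using $\alpha(\beta(a))=\beta(\alpha(a))$.

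The only potential pitfall is bookkeeping: since $\alpha$ governs the Hom-structure of $(B,\star)$ while $\beta$ governs that of $(B,\bullet)$, each rewrite must invoke multiplicativity of the correct map on the correct operation, and the commutation $\alpha\circ\beta=\beta\circ\alpha$ must be used whenever powers of the two maps need to be reordered. No genuinely hard step is expected; the lemma is essentially the Hom-brace analogue of the computations already carried out for Hom-trusses of type $(1)$.
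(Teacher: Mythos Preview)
Your proposal is correct and matches the intended approach: the paper states this lemma without proof, treating it as a routine verification, and your plan of directly applying Hom-associativity for $(B,\bullet,\beta)$ together with multiplicativity of $\alpha$ and $\beta$ and the commutation $\alpha\circ\beta=\beta\circ\alpha$ for (1)--(2), and substituting twisted arguments into the Hom-distributive laws of axiom~(v) for (3)--(4), is precisely the straightforward computation the authors had in mind.
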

\begin{proof}
Statements~$(1)$ and $(2)$ follow from Property~$(4)$ in the definition of a Hom-brace of type~$(1)$ together with the identity
\[
\alpha\circ\beta=\beta\circ\alpha.
\]
Statements~$(3)$ and $(4)$ follow from Property~$(5)$.
\end{proof}

\begin{lemma}
Let $(B,\star,\bullet,\alpha,\beta)$ be a Hom-brace of type $(2)$. Then, for all $a,b,c,d \in B$, where $a^{\star}$ denotes the inverse of $a$ with respect to $\star$, the following hold:
\begin{enumerate}
\item[1.] $\beta(a)\bullet(b\bullet c)=(a\bullet b)\bullet \alpha(c)$.
\item[2.] $(a\bullet b)\bullet\beta(c)=\beta(a)\bullet\bigl(b\bullet\beta(\alpha^{-1}(c))\bigr)$.
\item[3.] $\alpha(a)\bullet(b\star d)=(a\bullet b)\star\{ a^{\star}\star (a\bullet c)\}$.
\item[4.] $(b\star c)\bullet \alpha(a)=\{(b\bullet a)\star a^{\star}\}\star(c\bullet a)$.
\end{enumerate}
\end{lemma}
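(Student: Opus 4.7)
The plan is to derive each of the four identities directly from the defining axioms of a Hom-brace of type $(2)$, by substituting preimages under $\alpha$ and $\beta$ and exploiting the bijectivity of both twisting maps, the commutation $\alpha\circ\beta=\beta\circ\alpha$, the multiplicativity of $\alpha$ and $\beta$ with respect to both $\star$ and $\bullet$, and the induced identity $\alpha(x^{\star})=\alpha(x)^{\star}$ for the $\star$-inverse (which follows from $\alpha$ being an automorphism of the Hom-group $(B,\star,\alpha)$, as in the general Hom-group fact cited in the preliminaries).

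For (1), I would begin from the Hom-associativity axiom
$$\beta(\alpha^{2}(a))\bullet\bigl(\beta(\alpha(b))\bullet\beta^{2}(c)\bigr)=\bigl(\alpha^{2}(a)\bullet\beta(\alpha(b))\bigr)\bullet\beta^{2}(\alpha(c)),$$
and perform the simultaneous substitution $a\mapsto\alpha^{-2}(a)$, $b\mapsto\alpha^{-1}\beta^{-1}(b)$, $c\mapsto\beta^{-2}(c)$. Using $\alpha\beta=\beta\alpha$, the left-hand side collapses to $\beta(a)\bullet(b\bullet c)$ and the right-hand side to $(a\bullet b)\bullet\alpha(c)$, giving (1). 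Identity (2) then follows immediately by substituting $c\mapsto\beta(\alpha^{-1}(c))$ into (1): on the right, $\alpha\bigl(\beta(\alpha^{-1}(c))\bigr)=\beta(c)$ by commutation, while the left becomes $\beta(a)\bullet\bigl(b\bullet\beta(\alpha^{-1}(c))\bigr)$, which is exactly the required form.

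For (3), I would start with the left Hom-distributivity axiom
$$\alpha^{2}(a)\bullet\beta(b\star c)=(\alpha(a)\bullet\beta(b))\star\{\alpha(a^{\star})\star(\alpha(a)\bullet\beta(c))\},$$
and substitute $a\mapsto\alpha^{-1}(a)$, $b\mapsto\beta^{-1}(b)$, $c\mapsto\beta^{-1}(c)$. The $\star$-multiplicativity of $\beta$ collapses the left side to $\alpha(a)\bullet(b\star c)$, and on the right the term $\alpha(\alpha^{-1}(a)^{\star})$ reduces to $a^{\star}$ via $\alpha(x^{\star})=\alpha(x)^{\star}$, yielding (3). Identity (4) is obtained by the symmetric strategy applied to the right Hom-distributivity axiom, substituting $b\mapsto\alpha^{-1}(b)$, $c\mapsto\alpha^{-1}(c)$, $a\mapsto\beta^{-1}(a)$, and then using $\alpha\beta=\beta\alpha$ together with $\alpha(x^{\star})=\alpha(x)^{\star}$ to push the Hom-twists through the $\star$-inverse.

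The main (and essentially only) obstacle throughout is bookkeeping: choosing preimage substitutions so that every $\alpha$ and every $\beta$ appearing in the axioms cancels in exactly the right order, and making sure the $\star$-inverses are threaded through the twisting maps correctly. Once those substitutions are fixed, each step is a direct consequence of bijectivity, commutation, and multiplicativity, so no additional structural ingredient is needed.
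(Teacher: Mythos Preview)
The paper states this lemma without proof, so there is no argument of the authors to compare against; your substitution strategy is exactly the natural one and, for items (1)--(3), it works cleanly as you describe.

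There is, however, a genuine gap in your treatment of item (4). With the substitution $a\mapsto\beta^{-1}(a)$, $b\mapsto\alpha^{-1}(b)$, $c\mapsto\alpha^{-1}(c)$ in the right Hom-distributivity axiom, the middle term on the right-hand side becomes
\[
\alpha\bigl((\beta^{-1}(a))^{\star}\bigr)=\bigl(\alpha\beta^{-1}(a)\bigr)^{\star}=\bigl(\beta^{-1}\alpha(a)\bigr)^{\star},
\]
and neither commutation $\alpha\beta=\beta\alpha$ nor the rule $\alpha(x^{\star})=\alpha(x)^{\star}$ reduces this to $a^{\star}$ unless $\alpha=\beta$. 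So ``pushing the Hom-twists through the $\star$-inverse'' does not finish the job here: the two twists that need to cancel are $\alpha$ and $\beta^{-1}$, not $\alpha$ and $\alpha^{-1}$ as in item (3). In other words, your bookkeeping is correct, and what it actually shows is
\[
(b\star c)\bullet\alpha(a)=\{(b\bullet a)\star(\alpha\beta^{-1}(a))^{\star}\}\star(c\bullet a),
\]
which collapses to the stated identity (4) precisely when $\alpha=\beta$. Since the only use the paper makes of this lemma is the $\alpha$-case (Corollary on equivalence of $\alpha$-Hom-braces of types (1) and (2)), this discrepancy does not affect the downstream results, but you should flag it rather than claim the substitution closes as written.
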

\begin{proof}
Statements~$(1)$ and $(2)$ follow from Property~$(4)$ in the definition of a Hom-brace of type~$(2)$. Statements~$(3)$ and $(4)$ follow from Property~$(5)$.
\end{proof}

From the above two lemmas, we obtain the following simple corollary:
\begin{Coro}\label{Coro5.9}
An $\alpha$-Hom-brace of type $(1)$ and type $(2)$ are equivalent.
\end{Coro}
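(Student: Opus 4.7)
The plan is to observe that the two preceding lemmas were engineered precisely to give this equivalence: once $\alpha=\beta$ is imposed, each axiom of a Hom-brace of type $(2)$ becomes a direct reformulation of an axiom of a Hom-brace of type $(1)$, and vice versa. So the argument reduces to a routine axiom-by-axiom comparison, with no further computation required beyond what the two lemmas already package.

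For the implication type $(1)\Rightarrow$ type $(2)$, I would start from an $\alpha$-Hom-brace of type $(1)$ and check the five axioms of a type $(2)$ Hom-brace. Axioms (i), (iii), and (iv) coincide verbatim with the type $(1)$ axioms (since $\alpha=\beta$) and so transfer for free. For the nested Hom-associativity axiom (ii) of type $(2)$, I would invoke parts (1) and (2) of the lemma for type $(1)$ Hom-braces: under $\alpha=\beta$ and multiplicativity of $\alpha$, the left-hand and right-hand sides collapse to $\alpha(\alpha(a)\bullet(b\bullet c))$-type expressions with matching powers of $\alpha$ on the outside, and the ordinary Hom-associativity $\alpha(a)\bullet(b\bullet c)=(a\bullet b)\bullet\alpha(c)$ coming from the Hom-group $(B,\bullet,\alpha)$ closes the gap. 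Parts (3) and (4) of the same lemma are exactly the two nested Hom-distributive identities appearing in axiom (v) of a type $(2)$ Hom-brace, completing the direction.

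For the converse, I would invoke the lemma for type $(2)$ Hom-braces: parts (1), (3), (4) state, respectively, the ordinary Hom-associativity $\alpha(a)\bullet(b\bullet c)=(a\bullet b)\bullet\alpha(c)$ and the two ordinary Hom-distributive laws of a type $(1)$ Hom-brace. All remaining axioms transfer unchanged under $\alpha=\beta$. The one piece of bookkeeping worth flagging, which I expect to be the only real care-point in the proof, is the compatibility of $\alpha$ with the $\star$-inverse, namely $\alpha(a^{\star})=(\alpha(a))^{\star}$; this is immediate from $\alpha$ being an automorphism of the abelian Hom-group $(B,\star,\alpha)$, and it is what allows the $a^{\star}$ that appears inside the nested distributive identities of type $(2)$ to match cleanly with the plain $a^{\star}$ of type $(1)$. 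With that observation the two axiom systems agree term-by-term, and the corollary follows.
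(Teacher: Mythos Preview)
Your proposal is correct and mirrors the paper's approach exactly: the paper simply states that the corollary follows ``from the above two lemmas,'' and you have spelled out precisely how those lemmas are deployed axiom-by-axiom under the specialization $\alpha=\beta$. The only additional observation you make explicit---that $\alpha(a^{\star})=(\alpha(a))^{\star}$ is needed to reconcile the inverse terms in the distributive identities---is indeed the one bookkeeping point implicit in the paper's terse reduction.
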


From Corollary~\ref{Coro5.9} and Remark~\ref{Rem5.4}, we obtain the relationship between different types of Hom-braces as follows:  

\[
\begin{tikzcd}
	{\textbf{Hom-brace of type (1)}} 
	\arrow[<->, "{\alpha=\beta}"]{rrrr} 
	&&&& {\textbf{Hom-brace of type (2)}} \\
	\\
	\\
	&& {\textbf{Hom-brace of type (0)}}
	\arrow["{\alpha=id}"', from=1-1, to=4-3]
	\arrow["{\alpha=\beta=id}", from=1-5, to=4-3]
\end{tikzcd}
\]

 \section{Relationship Between Hom-Trusses and Hom-Braces}
\begin{Def}
An element $0$ of a Hom-truss is called an \emph{absorber} if, for all $a \in T$,
\[
a \bullet 0 = 0 = 0 \bullet a.
\]
\end{Def}

\begin{thm}
Let $(T,\langle-,-,-\rangle,\bullet,\alpha)$ be a Hom-truss of type $(0)$.
\begin{enumerate}
\item[(1)] If $T$ is unital, then the operations $+_{1}$ and $\bullet$ satisfy the left and right distributive laws Hom-brace of type $(0)$. That is, for all $a,b,c \in T$,
\[
a \bullet (b +_{1} c) = (a \bullet b) -_{1} a +_{1} (a \bullet c),
\]
and
\[
(b +_{1} c) \bullet a = (b \bullet a) -_{1} a +_{1} (c \bullet a).
\]
\item[(2)] If $0$ is an absorber of $T$, then $(T,+_{0},\bullet,\alpha)$ forms a Hom-ring (see the definition of Hom-ring).
\end{enumerate}
\end{thm}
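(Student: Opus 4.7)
For part (1), my plan is to attack each side of the left distributive identity separately. On the left, I will unfold $b +_1 c = \langle b, 1, c\rangle$ and apply the left distributivity of $\bullet$ over the ternary operation supplied by the Hom-truss axioms to obtain $\langle a\bullet b,\, a\bullet 1,\, a\bullet c\rangle$; the unital identity $a \bullet 1 = a$ collapses this to $\langle a \bullet b,\, a,\, a\bullet c\rangle$. On the right, I will unfold $-_1 a = \langle 1, a, 1\rangle$ and apply heap associativity together with the Mal'cev identities $\langle x, 1, 1\rangle = x$ and $\langle 1, 1, x\rangle = x$ (twice each) to bring $(a\bullet b) -_1 a +_1 (a\bullet c)$ to the same expression $\langle a\bullet b,\, a,\, a\bullet c\rangle$. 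The right distributive law is obtained by a mirror argument using $1 \bullet a = a$ and right distributivity of $\bullet$ over $\langle-,-,-\rangle$.

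For part (2), I will verify the four axioms of a Hom-ring one at a time. That $(T, +_0)$ is an abelian group is the classical heap-retract construction on $(T, \langle-,-,-\rangle)$. Two-sided distributivity is a direct consequence of distributivity over $\langle-,-,-\rangle$ combined with the absorber property: $a \bullet (b +_0 c) = a \bullet \langle b, 0, c\rangle = \langle a\bullet b,\, a\bullet 0,\, a\bullet c\rangle = \langle a\bullet b,\, 0,\, a\bullet c\rangle = (a\bullet b) +_0 (a\bullet c)$, with the right-handed version being symmetric. Hom-associativity of $\bullet$ relative to $\alpha$ is supplied directly by the Hom-truss axioms.

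The delicate piece, which I expect to be the main obstacle, is verifying that $\alpha$ is a group endomorphism of $(T, +_0)$. Since $\alpha$ is a heap endomorphism one automatically has $\alpha(a +_0 b) = \langle \alpha(a),\, \alpha(0),\, \alpha(b)\rangle$, and this coincides with $\alpha(a) +_0 \alpha(b) = \langle \alpha(a),\, 0,\, \alpha(b)\rangle$ precisely when $\alpha(0) = 0$. My plan is to extract this identity from multiplicativity of $\alpha$ (which the Hom-truss framework provides) together with bijectivity and the absorber axiom: applying $\alpha$ to $a \bullet 0 = 0$ gives $\alpha(a)\bullet \alpha(0) = \alpha(0)$, and since $\alpha(a)$ ranges over all of $T$, this forces $x \bullet \alpha(0) = \alpha(0)$ for every $x \in T$; specialising to $x = 0$ together with $0 \bullet \alpha(0) = 0$ yields $\alpha(0) = 0$. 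Once this is secured, the additivity of $\alpha$ for $+_0$ is immediate, and all four Hom-ring axioms hold.
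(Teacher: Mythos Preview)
Your treatment of part~(1) is correct and matches the paper's argument: both compute $a\bullet(b+_1 c)=\langle a\bullet b,\,a,\,a\bullet c\rangle$ via distributivity and the unital law, and then rewrite this heap expression as $(a\bullet b)-_1 a+_1(a\bullet c)$ using the Mal'cev identities and heap associativity. Your meet-in-the-middle presentation is cosmetically different from the paper's left-to-right chain, but the content is identical.

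For part~(2) there is a genuine gap. You correctly isolate the point the paper glosses over, namely that axiom~(iii) of the Hom-ring definition requires $\alpha$ to be a homomorphism of $(T,+_0)$, and hence that $\alpha(0)=0$ must hold. Your proposed derivation of $\alpha(0)=0$, however, invokes multiplicativity of $\alpha$ (``applying $\alpha$ to $a\bullet 0=0$ gives $\alpha(a)\bullet\alpha(0)=\alpha(0)$''), and multiplicativity is \emph{not} among the axioms of a Hom-truss of type~$(0)$: Definition~\ref{de1} only asks that $\alpha$ be a bijective heap endomorphism satisfying Hom-associativity, with no condition of the form $\alpha(a\bullet b)=\alpha(a)\bullet\alpha(b)$. (Multiplicativity appears for types~$(1)$ and~$(2)$, but not here.) So the step $\alpha(a\bullet 0)=\alpha(a)\bullet\alpha(0)$ is unjustified, and the rest of your argument for $\alpha(0)=0$ collapses. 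The paper's own proof of part~(2) verifies only the distributive law and says nothing about this axiom, so you are right that something is missing; but the patch you propose does not work as stated, and you would either need an independent argument for $\alpha(0)=0$ from the available axioms or an additional hypothesis.
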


\begin{proof}
To prove assertion $(1)$, we use the heap axioms, the definition of $+_{1}$, the inverse element of the retract at $1$ (as in Remark~\ref{re1}), together with the distributivity and unital properties. We obtain
\begin{align*}
a\bullet(b+_{1}c)&=a\bullet\langle b,1,c\rangle\\
&=\langle a\bullet b,a\bullet 1,a\bullet c\rangle\\
&=\langle a\bullet b,a,a\bullet c\rangle\\
&=\langle\langle a\bullet b,1,1\rangle,a,\langle 1,1,a\bullet c\rangle\rangle\\
&=\langle a\bullet b,1,\langle\langle 1,a,1\rangle,1,a\bullet c\rangle\rangle\\
&=a\bullet b-_{1}a+_{1}a\bullet c.
\end{align*}
The right brace distributive law can be proved in a similar manner. 

To prove assertion $(2)$, let $a,b,c\in T$. Using the definition of $+_{0}$, the distributive law of the Hom-truss~[\ref{de1}], and the property $a\bullet 0=0$, we have
\[
a\bullet(b+_{0}c)=a\bullet\langle b,0,c\rangle=\langle a\bullet b,a\bullet 0,a\bullet c\rangle=\langle a\bullet b,0,a\bullet c\rangle=(a\bullet b)+_{0}(a\bullet c).
\]
Similarly, the right distributive law can be proved.
\end{proof}
\begin{lemma}
Let $(B,+,\bullet,\alpha)$ be a Hom-brace of type $(0)$. Then the following hold:
\begin{enumerate}
\item[(1)] $a\bullet 0 = a$, for all $a \in B$, where $0$ is the identity element with respect to $+$.
\item[(2)] $a\bullet(-c) = 2a - a\bullet c$, for all $a,c \in B$.
\item[(3)] If the ternary operation is defined by 
\[
\langle a,b,c\rangle = a - b + c,
\]
then 
\[
a\bullet \langle b,c,d\rangle = \langle a\bullet b,\; a\bullet c,\; a\bullet d\rangle.
\]
\end{enumerate}
\end{lemma}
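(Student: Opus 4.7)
The plan is to derive all three statements directly from the left-distributivity axiom of Definition~\ref{def6}, written additively as
\[
a\bullet(b+c) \;=\; (a\bullet b) \,-\, a \,+\, (a\bullet c),
\]
where $-x$ denotes the $+$-inverse of $x$; all manipulations take place inside the abelian group $(B,+)$. The twisting map $\alpha$ plays no direct role here, because the distributive axiom for a Hom-brace of type $(0)$ carries no $\alpha$, and the Hom-associativity of $\bullet$ is never invoked. In particular, the $+$-identity $0$ need not coincide with the Hom-group unit $e_\bullet$ of $(B,\bullet,\alpha)$, and the statement $a\bullet 0 = a$ in (1) is compatible with (rather than contradictory to) the Hom-group axiom $a\bullet e_\bullet = \alpha(a)$.

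For (1), I specialize distributivity at $b=c=0$ and use $0+0=0$ to obtain $a\bullet 0 = (a\bullet 0) - a + (a\bullet 0)$; cancelling one copy of $a\bullet 0$ in $(B,+)$ gives $a\bullet 0 = a$. For (2), I apply distributivity to the identity $0 = c + (-c)$, getting $a\bullet 0 = (a\bullet c) - a + a\bullet(-c)$, then substitute $a\bullet 0 = a$ from (1) and rearrange in the abelian group to obtain $a\bullet(-c) = 2a - a\bullet c$, where $2a$ abbreviates $a+a$.

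For (3), I expand $\langle b,c,d\rangle = b - c + d$ and apply left distributivity twice by grouping as $b + ((-c) + d)$. The outer step gives $a\bullet(b - c + d) = (a\bullet b) - a + a\bullet(-c + d)$, and the inner step gives $a\bullet(-c + d) = a\bullet(-c) - a + a\bullet d$. Combining the two expansions and substituting $a\bullet(-c) = 2a - a\bullet c$ from (2), the four $\pm a$ summands telescope, leaving $(a\bullet b) - (a\bullet c) + (a\bullet d)$, which is precisely $\langle a\bullet b,\, a\bullet c,\, a\bullet d\rangle$. The only point requiring care is the sign bookkeeping in $(B,+)$, which is automatic because $(B,+)$ is abelian; the right-distributivity law of Definition~\ref{def6} is never needed for any of the three parts.
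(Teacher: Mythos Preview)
Your proof is correct and follows essentially the same approach as the paper: both derive (1) from $0=0+0$, (2) from $0=c+(-c)$ combined with (1), and (3) by two applications of left distributivity together with (2). The only cosmetic difference is that the paper groups $b-c+d$ as $(b-c)+d$ while you group it as $b+(-c+d)$; the resulting cancellation of the $\pm a$ terms is identical.
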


\begin{proof}
To prove assertion $(1)$, observe that
\begin{align*}
a\bullet 0 &= a\bullet(0+0) \\
&= a\bullet 0 - a + a\bullet 0 \\
&= 2(a\bullet 0) - a.
\end{align*}
Hence, it follows that $a\bullet 0 = a$.

\smallskip

To prove assertion $(2)$, for any $a,c \in B$, we have
\[
a\bullet 0 = a\bullet(c+(-c)) = a\bullet c - a + a\bullet(-c).
\]
From assertion $(1)$ it follows that
\[
a\bullet (-c) = 2a - a\bullet c.
\]

\smallskip

Finally, to prove assertion $(3)$, for all $a,b,c,d \in B$, we compute:
\begin{align*}
a\bullet \langle b,c,d\rangle &= a\bullet\bigl((b-c)+d\bigr) \\
&= a\bullet(b-c) - a + a\bullet d \\
&= a\bullet b - a + a\bullet(-c) - a + a\bullet d \\
&= a\bullet b - a + (2a - a\bullet c) - a + a\bullet d \quad \text{(using (2))} \\
&= a\bullet b - a\bullet c + a\bullet d \\
&= \langle a\bullet b,\; a\bullet c,\; a\bullet d\rangle.
\end{align*}
This completes the proof.
\end{proof}

\begin{Coro}
Every Hom-truss of type $(0)$, $(T,\langle-,-,-\rangle,\bullet,\alpha)$, in which $(T,\bullet,\alpha)$ is a Hom-group, gives rise to a Hom-brace of type $(0)$, $(T,+_{1},\bullet,\alpha)$. Conversely, any Hom-brace of type $(0)$, $(T,+_{1},\bullet,\alpha)$ gives rise to a unital Hom-truss of type $(0)$.
\end{Coro}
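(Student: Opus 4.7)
The plan is to assemble the preceding theorem and lemma of this section into the claimed two-sided correspondence. Both halves hinge on the dual role played by the distinguished element $1 \in T$, which simultaneously serves as the Hom-group unit of $(T, \bullet, \alpha)$ and as the retract base of the abelian heap $(T, \langle -,-,-\rangle)$.

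For the forward direction, assume $(T, \langle -,-,-\rangle, \bullet, \alpha)$ is a Hom-truss of type $(0)$ in which $(T, \bullet, \alpha)$ is a Hom-group with unit $1$, so in particular $\alpha(1) = 1$. I would first apply the classical abelian-heap retract at $1$ to obtain an abelian group $(T, +_{1})$ with identity $1$, where $a +_{1} b = \langle a, 1, b \rangle$ and the $+_{1}$-inverse is $a^{\star} = \langle 1, a, 1 \rangle$. Since $\alpha$ is a bijective heap endomorphism fixing $1$, specializing $\alpha \langle a, 1, c \rangle = \langle \alpha(a), \alpha(1), \alpha(c)\rangle$ yields $\alpha(a +_{1} c) = \alpha(a) +_{1} \alpha(c)$, so $\alpha$ is a group automorphism of $(T, +_{1})$. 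This establishes axioms (i)--(iii) of a Hom-brace of type $(0)$, while axiom (iv) -- the two brace-distributive laws -- is exactly assertion (1) of the preceding theorem.

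For the converse, given a Hom-brace of type $(0)$ $(T, +_{1}, \bullet, \alpha)$ with additive identity $1$, I would define the ternary operation by $\langle a, b, c \rangle := a -_{1} b +_{1} c$. A standard check shows that $(T, \langle -,-,-\rangle)$ is an abelian heap. The Hom-brace axiom (iii), together with the resulting identity $\alpha(-_{1} b) = -_{1}\alpha(b)$, gives $\alpha \langle a, b, c \rangle = \langle \alpha(a), \alpha(b), \alpha(c)\rangle$, so $\alpha$ is a heap endomorphism. Hom-associativity of $\bullet$ is part of the Hom-group axioms. The left distributive law is exactly assertion (3) of the preceding lemma, and the right distributive law follows by a symmetric computation using the right Hom-brace identity $(b +_{1} c)\bullet a = (b \bullet a) +_{1} a^{\star} +_{1} (c \bullet a)$ in place of the left one. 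The Hom-group unit $1$ then plays the role of the unit of the resulting Hom-truss, with $\alpha(1) = 1$ inherited from the Hom-group.

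The main obstacle is bookkeeping rather than substance: I must track the dual role of $1$ carefully, and in the converse direction I must verify the right distributive law symmetrically, since the preceding lemma only explicitly records the left-sided form. Otherwise the proof reduces to packaging the preceding theorem and lemma into the language of Hom-braces.
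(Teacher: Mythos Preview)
Your overall strategy---reading the forward half off assertion~(1) of the preceding theorem and the converse off assertion~(3) of the preceding lemma---is exactly what the paper intends, and the paper supplies no argument beyond this packaging. However, the ``bookkeeping'' issue you flag at the end is not cosmetic; it conceals a genuine obstruction in the forward direction.

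You take $1$ to be the \emph{Hom-group} unit of $(T,\bullet,\alpha)$, so that $a\bullet 1=\alpha(a)$. Assertion~(1) of the preceding theorem, by contrast, is stated and proved under the hypothesis that $T$ is \emph{unital as a Hom-truss of type~$(0)$}, i.e.\ $a\bullet 1=a$; the step $\langle a\bullet b,\,a\bullet 1,\,a\bullet c\rangle=\langle a\bullet b,\,a,\,a\bullet c\rangle$ in its proof uses exactly this. With your choice of $1$ one instead gets
\[
a\bullet(b+_{1}c)=\langle a\bullet b,\,a\bullet 1,\,a\bullet c\rangle=\langle a\bullet b,\,\alpha(a),\,a\bullet c\rangle,
\]
while $(a\bullet b)-_{1}a+_{1}(a\bullet c)=\langle a\bullet b,\,a,\,a\bullet c\rangle$; these disagree whenever $\alpha(a)\neq a$, so the brace-distributive law you need does \emph{not} follow. (Indeed, if a Hom-group unit were also a Hom-truss unit then $\alpha(a)=a\bullet 1=a$ for all $a$, forcing $\alpha=\mathrm{id}$.) In the converse direction your argument is essentially right, but the final sentence mislabels the unit: the element $1$ is the \emph{additive} identity of $(T,+_{1})$, not the Hom-group unit; that it serves as a Hom-truss unit ($a\bullet 1=1\bullet a=a$) is exactly part~(1) of the preceding lemma together with its right-sided analogue, and $\alpha(1)=1$ comes from axiom~(iii) of the Hom-brace, not from the Hom-group structure.
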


\begin{thm}
Let $(T,\langle-,-,-\rangle,\bullet,\alpha,\beta)$ be an idempotent Hom-truss of type $(1)$.
\begin{enumerate}
\item[(1)] If $T$ is unital, then the operations $+_{1}$ and $\bullet$ satisfy the left and right distributive laws of a Hom-brace of type $(1)$. That is, for all $a,b,c \in T$,
\[
\alpha(a)\bullet(b+_{1}c) = (a\bullet b)+\{-_{1}a+_{1}(a\bullet c)\},
\]
and
\[
(b+_{1}c)\bullet\alpha(a) = \{(b\bullet a)-_{1}a\}+_{1}(c\bullet a).
\]

\item[(2)] If $0$ is an absorber in $T$, then $(T,+_{0},\bullet,\alpha,\beta)$ is a Hom-ring of type $(1)$.
\end{enumerate}
\end{thm}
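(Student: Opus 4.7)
My plan is to proceed by direct analogy with the proof of the corresponding theorem for Hom-trusses of type $(0)$ given above, using the retract construction of the additive Hom-group at the unit $1$ (as guaranteed by Theorem~\ref{hinv}, since $\alpha(1)=1$), together with the left and right Hom-distributive laws~\eqref{eq2} and~\eqref{eq3} for Hom-trusses of type $(1)$.

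For assertion $(1)$, I would begin with the left-hand side $\alpha(a)\bullet(b+_{1}c)=\alpha(a)\bullet\langle b,1,c\rangle$ and apply~\eqref{eq2} to obtain $\langle a\bullet b,\, a\bullet 1,\, a\bullet c\rangle=\langle a\bullet b,\, \beta(a),\, a\bullet c\rangle$, using the unital axiom~\eqref{eq4}. For the right-hand side I would expand using the definition of $+_{1}$ together with the explicit formula for the inverse in the retract Hom-group, namely $-_{1}a=\langle 1,\alpha^{-1}(a),1\rangle$, read off from the proof of Theorem~\ref{hinv}. The right-hand side then becomes the nested expression $\langle a\bullet b,\,1,\,\langle\langle 1,\alpha^{-1}(a),1\rangle,\,1,\, a\bullet c\rangle\rangle$. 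Using Proposition~\ref{Prop1} (the shifted Hom-associativity) together with the Hom-Mal'cev identity and the facts $\alpha(1)=1=\beta(1)$, I would collapse this triple nesting down to $\langle a\bullet b,\, \beta(a),\, a\bullet c\rangle$, matching the left-hand side. The right distributive law follows by the mirror computation using~\eqref{eq3}.

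For assertion $(2)$, I would use that the absorber $0\in T$ is necessarily a fixed point of $\alpha$, so that the heap retract at $0$ yields an abelian Hom-group $(T,+_{0},\alpha)$. Then direct distribution gives $\alpha(a)\bullet(b+_{0}c)=\alpha(a)\bullet\langle b,0,c\rangle=\langle a\bullet b,\, 0,\, a\bullet c\rangle=(a\bullet b)+_{0}(a\bullet c)$, since the middle entry absorbs, and analogously on the right. This yields the bilateral Hom-distributivity required for a Hom-ring of type $(1)$. The remaining Hom-ring axioms---Hom-associativity of $\bullet$, the abelian Hom-group structure on $+_{0}$, and compatibility of $\alpha,\beta$ with both operations---are immediate from the Hom-truss of type $(1)$ axioms.

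The main obstacle will be the careful bookkeeping of $\alpha$- and $\beta$-twists in part $(1)$: each use of Hom-associativity shifts an $\alpha$ across the other two entries, while the right-hand side begins as a three-fold nested heap expression, so the simplification must thread the Hom-Mal'cev identity in the exact form $\langle x,x,y\rangle=\alpha(y)=\langle y,x,x\rangle$ together with Proposition~\ref{Prop1} in the correct positions. It is precisely at this point that the \emph{idempotent} hypothesis, together with the unital conditions $\alpha(1)=1=\beta(1)$, is expected to be needed to align the residual $\alpha$-images arising on the right with the $\beta$-images produced on the left, so that the two sides of the claimed distributivity actually coincide rather than differing by an extra twist.
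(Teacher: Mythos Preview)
Your proposal is correct and follows essentially the same route as the paper: expand the left-hand side via the Hom-distributive law \eqref{eq2}, expand the right-hand side via the retract operations $+_{1}$ and $-_{1}a=\langle 1,\alpha^{-1}(a),1\rangle$ from Theorem~\ref{hinv}, and then use Proposition~\ref{Prop1} together with the Hom-Mal'cev identity and the idempotence of $\alpha$ to match the two (the paper runs the chain LHS~$\to$~RHS rather than meeting in the middle, but the manipulations are identical). One point of care: you correctly write $a\bullet 1=\beta(a)$ per \eqref{eq4}, whereas the paper's displayed computation writes $\alpha(a)$ at that step---your version is the one consistent with the stated axioms, and the reconciliation of the resulting $\beta(a)$ on the left with the $\alpha$-twists on the right is exactly where, as you anticipate, the idempotent hypothesis does its work.
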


\begin{proof}
To prove assertion $(1)$, we use the Hom-heap axioms, the definition of $+_{1}$, the inverse element of the retract by $1$ in (\ref{hinv}), the Hom-distributivity of type $(1)$, and the unitary property. We obtain
\begin{align*}
\alpha(a)\bullet(b+_{1}c) &= \alpha(a)\bullet \langle b,1,c\rangle \\
&= \langle a\bullet b,\, a\bullet 1,\, a\bullet c\rangle \\
&= \langle a\bullet b,\, \alpha(a),\, a\bullet c\rangle \\
&= \langle \langle \alpha^{-1}(a\bullet b),1,1\rangle,\, \alpha(a),\, a\bullet c\rangle~~~~~ \text{(using $\langle\alpha^{-1}(a\bullet b),1,1\rangle=\alpha\alpha^{-1}(a\bullet b)=a\bullet b$)} \\
&= \langle \langle \alpha^{-1}(a\bullet b),\, \alpha^{-1}(1),\, \alpha^{-1}(1)\rangle,\, \alpha(a),\, a\bullet c\rangle \\
&= \langle a\bullet b,\, 1,\, \langle 1,\, a,\, \alpha^{-1}(a\bullet c)\rangle\rangle \\
&= \langle a\bullet b,\, 1,\, \langle 1,\, a,\, \langle 1,\, 1,\, \alpha^{-2}(a\bullet c)\rangle\rangle\rangle~~~\text{(using $\langle 1,\, 1,\, \alpha^{-2}(a\bullet c)\rangle=\alpha\alpha^{-2}(a\bullet c)=\alpha^{-1}(a\bullet c)$)} \\
&= \langle a\bullet b,\, 1,\, \langle 1,\, a,\, \langle 1,\, 1,\, \alpha^{-1}(a\bullet c)\rangle\rangle\rangle 
\qquad \text{(since $\alpha^{2}=\alpha$)} \\
&= \langle a\bullet b,\, 1,\, \langle\langle 1,\, \alpha^{-1}(a),\, 1\rangle,\, 1,\, a\bullet c\rangle \rangle
\qquad \text{(by Proposition \ref{Prop1})} \\
&= a\bullet b \,+_{1}\,\{-_{1}a \,+_{1}\, a\bullet c\}.
\end{align*}
Thus, the left brace distributive law holds. The right brace distributive law can be proved in a similar manner. 

\smallskip

To prove assertion $(2)$, take any $a,b,c\in T$. By the definition of $+_{0}$ in \ref{hinv}, the Hom-truss distributivity of type $(1)$, and the property $a\bullet 0=0$, we obtain
\[
 \alpha(a)\bullet(b+_{0}c)
   = \alpha(a)\bullet \langle b,0,c\rangle
   = \langle a\bullet b,\, a\bullet 0,\, a\bullet c\rangle
   = \langle a\bullet b,\, 0,\, a\bullet c\rangle
   = (a\bullet b)+_{0}(a\bullet c).
\]
The right distributive law can be proved analogously.
\end{proof}
\begin{lemma}
Let $(B,+,\bullet,\alpha,\beta)$ be an idempotent Hom-brace of type $(1)$. Then the following hold:
\begin{enumerate}
\item[(1)] $\alpha^{-1}(a)\bullet 0 = \alpha^{-1}(a)$, for all $a \in B$, where $0$ is the identity element with respect to $+$.
\item[(2)] $\alpha^{-1}(a)\bullet(-\alpha^{-1}(c)) = \alpha^{-1}(2a) - \alpha^{-1}(a\bullet c)$, for all $a,c \in B$.
\item[(3)] If the ternary operation is defined by 
\[
\langle a,b,c\rangle = a + \bigl(-\alpha^{-1}(b) + \alpha^{-1}(c)\bigr),
\]
then
\[
\alpha(a)\bullet \langle b,c,d\rangle = \langle a\bullet b,\; a\bullet c,\; a\bullet d\rangle.
\]
\end{enumerate}
\end{lemma}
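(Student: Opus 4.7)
The plan is to mirror the proof of the analogous lemma for Hom-braces of type~$(0)$, carefully accounting for the twist $\alpha$ in the abelian Hom-group $(B,+,\alpha)$ and the explicit bracketing prescribed by the Hom-distributive laws of type~$(1)$.

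For part~(1), I would start from the fact that in the Hom-group $(B,+,\alpha)$ the identity satisfies $\alpha(0)=0$, so $0+0=\alpha(0)=0$. Applying the left Hom-distributive law $\alpha(x)\bullet(b+c)=(x\bullet b)+(x^{-}+(x\bullet c))$ with the substitution $\alpha(x)=\alpha^{-1}(a)$, i.e.\ $x=\alpha^{-2}(a)$, and with $b=c=0$, I obtain an equation of the shape
\[
\alpha^{-1}(a)\bullet 0=\bigl(\alpha^{-2}(a)\bullet 0\bigr)+\Bigl(\bigl(\alpha^{-2}(a)\bigr)^{-}+\bigl(\alpha^{-2}(a)\bullet 0\bigr)\Bigr).
\]
The idempotency hypothesis collapses $\alpha^{-2}$ to $\alpha^{-1}$, after which Hom-associativity and the additive inverse law allow the right-hand side to be rearranged and solved, yielding $\alpha^{-1}(a)\bullet 0=\alpha^{-1}(a)$.

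For part~(2), I would apply the left Hom-distributive law to $\alpha^{-1}(a)\bullet\bigl(c+(-c)\bigr)$. Since $c+(-c)=0$ in the Hom-group, part~(1) reduces the left-hand side to $\alpha^{-1}(a)$. Expanding the right-hand side produces a sum involving $\alpha^{-1}(a)\bullet c$ and $\alpha^{-1}(a)\bullet(-\alpha^{-1}(c))$; using idempotency, the multiplicativity of $\alpha$, and transposition of inverses in the abelian Hom-group, I would isolate $\alpha^{-1}(a)\bullet(-\alpha^{-1}(c))$ to obtain the claimed formula $\alpha^{-1}(a)\bullet(-\alpha^{-1}(c))=\alpha^{-1}(2a)-\alpha^{-1}(a\bullet c)$.

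For part~(3), the strategy is direct expansion. Rewriting $\langle b,c,d\rangle=b+\bigl(-\alpha^{-1}(c)+\alpha^{-1}(d)\bigr)$ and applying the left Hom-distributive law to $\alpha(a)\bullet\langle b,c,d\rangle$ twice---first to separate the outermost $+$, then to distribute across the inner sum---produces terms $a\bullet b$, $\alpha^{-1}(a)\bullet\alpha^{-1}(d)$, and $\alpha^{-1}(a)\bullet(-\alpha^{-1}(c))$. Multiplicativity of $\alpha$ rewrites $\alpha^{-1}(a)\bullet\alpha^{-1}(d)=\alpha^{-1}(a\bullet d)$, and part~(2) handles the negative term. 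Rearranging by Hom-associativity and cancelling the $a^{-}$ contributions in $(B,+,\alpha)$ should deliver the explicit sum $(a\bullet b)+\bigl(-\alpha^{-1}(a\bullet c)+\alpha^{-1}(a\bullet d)\bigr)$, which by definition is $\langle a\bullet b,a\bullet c,a\bullet d\rangle$.

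The principal obstacle throughout is bookkeeping: each application of Hom-distributivity introduces a twist $\alpha$ on one operand, the additive Hom-group is only Hom-associative so brackets cannot be freely rearranged, and both the inverse $u\mapsto -u$ and the $\bullet$-action interact non-trivially with $\alpha$. The idempotency hypothesis is precisely what permits the cascading powers $\alpha^{\pm k}$ to collapse so that the cancellations go through cleanly; without it, one is left with additional uncontrolled $\alpha$-factors that cannot be absorbed.
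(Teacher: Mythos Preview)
Your plan is correct and matches the paper's proof essentially step for step: both arguments derive (1) by applying Hom-distributivity to $\alpha^{-1}(a)\bullet(0+0)$ with $x=\alpha^{-2}(a)$ and collapsing via idempotency, obtain (2) by the same device applied to an additive inverse pair, and prove (3) by direct expansion using (2). One small correction: in part~(2) the paper takes the inverse pair $\alpha^{-1}(c)+(-\alpha^{-1}(c))$ rather than $c+(-c)$, which is what makes the term $\alpha^{-1}(a)\bullet(-\alpha^{-1}(c))$ appear directly upon expansion---your stated input $c+(-c)$ would instead produce $\alpha^{-1}(a)\bullet(-c)$.
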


\begin{proof}
To prove assertion $(1)$, we use the idempotent property and the associativity of the Hom-group. For all $a \in B$, we have
\begin{align*}
\alpha^{-1}(a)\bullet 0 
   &= \alpha^{-1}(a)\bullet (0+0) \\
   &= \alpha(\alpha^{-2}(a))\bullet (0+0) \\
   &= \alpha^{-2}(a)\bullet 0 + \{-\alpha^{-2}(a) + \alpha^{-2}(a)\bullet 0\} \\
   &= \alpha^{-1}(a)\bullet 0 + \{-\alpha^{-1}(a) + \alpha^{-1}(a)\bullet 0\}.
\end{align*}
Hence, it follows that $\alpha^{-1}(a)\bullet 0 = \alpha^{-1}(a)$.

\smallskip

To prove assertion $(2)$, we use assertion $(1)$, the idempotent property, and the Hom-associativity of the Hom-group. For all $a,c \in B$, we obtain
\begin{align*}
\alpha^{-1}(a) 
   &= \alpha^{-1}(a)\bullet 0 \\
   &= \alpha^{-1}(a)\bullet\bigl(\alpha^{-1}(c)+(-\alpha^{-1}(c))\bigr) \\
   &= \alpha(\alpha^{-2}(a))\bullet\bigl(\alpha^{-1}(c)+(-\alpha^{-1}(c))\bigr) \\
   &= \alpha^{-2}(a)\bullet \alpha^{-1}(c) + \{-\alpha^{-2}(a) + (\alpha^{-2}(a)\bullet(-\alpha^{-1}(c)))\} 
      \qquad \text{(by Hom-brace distributivity)} \\
   &= \alpha^{-1}(a)\bullet \alpha^{-1}(c) + \{-\alpha^{-1}(a) + (\alpha^{-1}(a)\bullet(-\alpha^{-1}(c)))\} 
      \qquad \text{(idempotent property)} \\
   &= \alpha^{-1}(a\bullet c) + \{-\alpha^{-1}(a) + (\alpha^{-1}(a)\bullet(-\alpha^{-1}(c)))\} 
      \qquad \text{($\alpha$ is multiplicative)}.
\end{align*}
Thus,
\[
\alpha^{-1}(a)-\alpha^{-1}(a\bullet c) = -\alpha^{-1}(a) + \alpha^{-1}(a)\bullet(-\alpha^{-1}(c)).
\]
Rearranging, we get
\[
\alpha^{-1}(a)\bullet(-\alpha^{-1}(c)) = \alpha^{-1}(2a) - \alpha^{-1}(a\bullet c).
\]

\smallskip

To prove assertion $(3)$, we use assertions $(1)$ and $(2)$ together with Hom-associativity. For all $a,b,c,d \in B$,we have
\begin{align*}
\alpha(a)\bullet\langle b,c,d\rangle 
   &= \alpha(a)\bullet\{\, b+(-\alpha^{-1}(c)+\alpha^{-1}(d)) \,\} \\
   &= a\bullet b + \{-a + [a\bullet(-\alpha^{-1}(c)+\alpha^{-1}(d))]\} \\
   &=a\bullet b+\{-a+[\alpha\alpha^{-1}(a)\bullet(-\alpha^{-1}(c)+\alpha^{-1}(d))] \}\\
   &=a\bullet b+\{-a+[\alpha^{-1}(a)\bullet(-\alpha^{-1}(c))+\{-\alpha^{-1}+\alpha^{-1}(a)\bullet\alpha^{-1}(d)\}]\}\qquad \text{(using Hom-brace distributing property)}\\
   &=a\bullet b+\{-a+[(\alpha^{-1}(2a)-\alpha^{-1}(a\bullet c))+\{-\alpha^{-1}(a)+\alpha^{-1}(a\bullet d)\}]\}\qquad \text{(by assertion (2))}\\
   &=a\bullet b+\{-a+[\alpha(\alpha^{-2}(2a)-\alpha^{-2}(a\bullet c))+\{-\alpha^{-1}(a)+\alpha^{-2}(a\bullet d)\}]\}\qquad \text{(using $\alpha^{-2}=\alpha^{-1})$}\\
   &=a\bullet b+\{-a+[\{(\alpha^{-1}(2a)-\alpha^{-1}(a\bullet c))-\alpha^{-1}(a)\}+\alpha^{-1}(a\bullet d)]\}\qquad\text{(by Hom-associative and $\alpha^{-2}=\alpha^{-1}$)}\\
   &=a\bullet b+\{-a+[\{\alpha^{-1}(a)+(\alpha^{-1}(2a)-\alpha^{-1}(a\bullet c))\}+\alpha^{-1}(a\bullet d)] \}\qquad\text{(using Commutative Hom-group property)}\\
   &=a\bullet b+\{-a+[\{\alpha\alpha^{-2}(-a)+(\alpha^{-1}(2a)-\alpha^{-2}(a\bullet c))\}+\alpha^{-1}(a\bullet d)] \}\\
   &=a\bullet b+\{-a+[\{\alpha^{-1}(-a+2a)-\alpha^{-1}(a\bullet c)\}+\alpha^{-1}(a\bullet d)]\}\qquad \text{(by Hom-associativity)}\\
   &=a\bullet b+\{-\alpha\alpha^{-1}(a)+[\{\alpha^{-1}(a)-\alpha^{-1}(a\bullet c)\}+\alpha^{-2}(a\bullet d)]\}\qquad\text{(using $\alpha^{-2}=\alpha^{-1}$)}\\
   &=a\bullet b+\{[\alpha^{-1}(-a)+\{\alpha^{-1}(a)-\alpha^{-1}(a\bullet c)\}]+\alpha^{-1}(a\bullet d)\}\\
   &=a\bullet b+\{[\alpha\alpha^{-2}(-a)+\{\alpha^{-1}(a)-\alpha^{-2}(a\bullet c)\}]+\alpha^{-1}(a\bullet d)\}\qquad\text{(using $\alpha^{-2}=\alpha^{-1}$)}\\
   &=a\bullet b+\{[\{\alpha^{-1}(-a)+\alpha^{-1}(a)\}-\alpha^{-1}(a\bullet c)]+\alpha^{-1}(a\bullet d)\}\\   
   &= a\bullet b + \{-\alpha^{-1}(a\bullet c) + \alpha^{-1}(a\bullet d)\} \\
   &= \langle a\bullet b,\, a\bullet c,\, a\bullet d\rangle.
\end{align*}
This completes the proof.
\end{proof}

\begin{Coro}
Every idempotent Hom-truss of type $(1)$ $(T,\langle-,-,-\rangle,\bullet,\alpha,\beta)$ in which $(T,\bullet,\beta)$ is a Hom-group gives rise to a Hom-brace of type $(1)$ $(T,+_{1},\bullet,\alpha,\beta)$. Conversely, any idempotent Hom-brace of type $(1)$ $(T,+_{1},\bullet,\alpha,\beta)$ gives rise to a unital Hom-truss of type $(1)$.
\end{Coro}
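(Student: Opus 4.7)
The plan is to derive both directions as consequences of the preceding theorem and lemma, combined with Theorem~\ref{hinv} applied at the appropriate fixed point. For the forward direction, suppose $(T,\langle-,-,-\rangle,\bullet,\alpha,\beta)$ is an idempotent Hom-truss of type $(1)$ in which $(T,\bullet,\beta)$ is a Hom-group with unit $1$. I would first observe that $1$ is $\alpha$-fixed: since $\alpha$ is multiplicative, $\alpha(1)=\alpha(1\bullet 1)=\alpha(1)\bullet\alpha(1)$, and multiplying by $\alpha(1)^{-1}$ in the Hom-group yields $\alpha(1)=1$. Together with $\beta(1)=1$, this places the Hom-truss in the unital setting required by the preceding theorem. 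Applying Theorem~\ref{hinv} to the Hom-heap $(T,\langle-,-,-\rangle,\alpha)$ at the fixed point $1$ produces a Hom-group $(T,+_1,\alpha)$, which is abelian because the underlying heap is. That $\beta$ is an automorphism of $(T,+_1,\alpha)$ follows from its being a heap endomorphism, commuting with $\alpha$, and fixing $1$. Multiplicativity of $\alpha$ with respect to $\bullet$ is part of the Hom-truss axioms, and the Hom-brace distributivity of $\bullet$ over $+_1$ on both sides is precisely the content of part~$(1)$ of the preceding theorem.

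For the converse, let $(B,+,\bullet,\alpha,\beta)$ be an idempotent Hom-brace of type $(1)$. Motivated by part~$(3)$ of the preceding lemma, I would define the ternary operation
\[
\langle a,b,c\rangle := a + \bigl(-\alpha^{-1}(b) + \alpha^{-1}(c)\bigr).
\]
I would then verify the axioms in turn. The abelian Hom-heap structure follows by direct computation from the abelian Hom-group $(B,+,\alpha)$, with the Mal'cev identity and Hom-associativity reduced via the idempotency assumption, and compatibility of $\alpha$ with this ternary operation is automatic since $\alpha$ is additive and commutes with $\alpha^{-1}$. The endomorphism, multiplicative, and commutativity properties of $\beta$ transfer directly from the Hom-brace data. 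The left Hom-truss distributivity is exactly part~$(3)$ of the preceding lemma; the right-sided analogue is proved by the symmetric argument using the right Hom-brace distributive law. Finally, the unit $1$ of $(B,\bullet,\beta)$ satisfies $a\bullet 1=1\bullet a=\beta(a)$ and $\alpha(1)=\beta(1)=1$, so the resulting Hom-truss of type $(1)$ is unital.

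The main obstacle is the converse, and specifically the verification of the abelian Hom-heap axioms for the twisted ternary operation. The Hom-associativity identity expands into nested $\alpha^{-1}$-terms that must be rearranged using the additive Hom-group axioms, and the idempotency of $\alpha$ is essential precisely because it collapses incompatible powers of $\alpha$ arising from these nested twists. The other delicate point is the correct matching of $\alpha$-twists on both sides when translating the right Hom-brace distributive law into the right Hom-truss distributive law; this is conceptually parallel to the left case but requires careful bookkeeping with $\alpha^{-1}$ and the additive inverses.
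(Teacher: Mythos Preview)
Your approach is correct and matches the paper's intent: the corollary is stated without proof there, and both directions are indeed meant to follow from the immediately preceding theorem and lemma together with Theorem~\ref{hinv}, exactly as you outline.

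One technical point deserves care. The step ``multiplying by $\alpha(1)^{-1}$ in the Hom-group yields $\alpha(1)=1$'' is not immediate, since $\bullet$ is only Hom-associative. A clean fix is: from $\alpha\circ\beta=\beta\circ\alpha$ and $\beta(1)=1$ you get $\beta(\alpha(1))=\alpha(1)$, hence $\alpha(1)\bullet 1=\beta(\alpha(1))=\alpha(1)=\alpha(1)\bullet\alpha(1)$; then left cancellation in a regular Hom-group (which follows from Hom-associativity: $\beta(a^{-1})\bullet(a\bullet b)=(a^{-1}\bullet a)\bullet\beta(b)=\beta^{2}(b)$) gives $\alpha(1)=1$. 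With that adjustment your forward direction goes through, and your plan for the converse---defining $\langle a,b,c\rangle=a+(-\alpha^{-1}(b)+\alpha^{-1}(c))$ and invoking part~(3) of the lemma for distributivity---is precisely the construction the paper has in mind.
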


\begin{thm}
Let $(T,\langle-,-,-\rangle,\bullet,\alpha,\alpha)$ be an idempotent $\alpha$-Hom-truss of type $(2)$.
\begin{enumerate}
\item[(1)] If $T$ is unital, then the operations $+_{1}$ and $\bullet$ satisfy the left and right distributive laws of a Hom-brace of type $(2)$, i.e., for all $a,b,c \in T$,
\[
\alpha^{2}(a)\bullet\alpha(b+_{1}c)=(\alpha(a)\bullet\alpha(b))+\{-_{1}\alpha(a)+_{1}(\alpha(a)\bullet\alpha(c))\},
\]
and
\[
\alpha(b+_{1}c)\bullet\alpha(\alpha(a))=\{(\alpha(b)\bullet\alpha(a))-_{1}\alpha(a)\}+_{1}(\alpha(c)\bullet\alpha(a)).
\]

\item[(2)] If $0$ is an absorber in $T$, then $(T,+_{1},\bullet,\alpha,\alpha)$ is a $\alpha$-Hom-ring of type $(2)$.
\end{enumerate}
\end{thm}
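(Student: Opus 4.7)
The natural strategy is to sidestep a direct Hom-heap computation and instead transport the analogous result from the preceding theorem across the equivalence established by Corollary~\ref{Coro 4.16}. That corollary identifies $\alpha$-Hom-trusses of type $(1)$ with $\alpha$-Hom-trusses of type $(2)$, so the hypotheses and conclusions of the preceding theorem are already available to us in the present setting, and the only real work is to rewrite its output in the form demanded by the type $(2)$ distributive identities.

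For part $(1)$, I would begin with the type $(1)$ left brace-distributive law (with $\beta$ specialized to $\alpha$):
\[
\alpha(x)\bullet(y+_{1}z) \;=\; (x\bullet y)+_{1}\{-_{1}x+_{1}(x\bullet z)\}.
\]
Substituting $x=\alpha(a)$, $y=\alpha(b)$, $z=\alpha(c)$, the left-hand side becomes $\alpha^{2}(a)\bullet(\alpha(b)+_{1}\alpha(c))$. Since $\alpha(1)=1$ and $\alpha$ is an endomorphism of the abelian Hom-heap, I have
\[
\alpha(b)+_{1}\alpha(c)=\langle\alpha(b),\alpha(1),\alpha(c)\rangle=\alpha\langle b,1,c\rangle=\alpha(b+_{1}c),
\]
which matches the shape of the left-hand side in the statement. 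The right-hand side takes the required form automatically after the same substitution. The right brace-distributive law follows by the symmetric substitution into the corresponding type $(1)$ identity.

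For part $(2)$, I would invoke the absorber clause of the preceding theorem to conclude that $(T,+_{0},\bullet,\alpha,\alpha)$ already carries a Hom-ring structure of type $(1)$. Because $\alpha=\beta$, this is an $\alpha$-Hom-ring of type $(1)$; the analogue of Corollary~\ref{Coro 4.16} for Hom-rings (proved by exactly the same collapsing of the two twisting maps into one as in the truss setting) then upgrades the structure to an $\alpha$-Hom-ring of type $(2)$. Note that the statement as printed writes $+_{1}$ in part $(2)$; I would interpret this as a typo for $+_{0}$, as the absorber hypothesis and the parallel with the type $(1)$ theorem make $+_{0}$ the only sensible choice.

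The main obstacle is purely notational bookkeeping: I need to be careful that the outer $\alpha$ attached to $b+_{1}c$ in the type $(2)$ left law really equals $\alpha(b)+_{1}\alpha(c)$ (which requires $\alpha(1)=1$), and that the brace inverse $-_{1}\alpha(a)$ appearing in the output agrees with $\alpha(-_{1}a)$, so that the substitution is internally consistent. Both follow because $\alpha$ is a heap endomorphism fixing $1$. Once these identifications are in place, no new algebraic content beyond the preceding theorem and Corollary~\ref{Coro 4.16} is required.
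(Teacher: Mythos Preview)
Your approach is correct and genuinely different from the paper's. The paper proves part~(1) by a direct Hom-heap computation: it expands $\alpha^{2}(a)\bullet\alpha(b+_{1}c)$ using the type~$(2)$ Hom-distributivity $\alpha^{2}(a)\bullet\alpha\langle b,1,c\rangle=\langle\alpha(a)\bullet\alpha(b),\,\alpha(a)\bullet 1,\,\alpha(a)\bullet\alpha(c)\rangle$, then manipulates the resulting bracket via Proposition~\ref{Prop1}, the Hom-Mal'cev identity, and the idempotence $\alpha^{2}=\alpha$ until the retract operations $+_{1}$ and $-_{1}$ reappear; part~(2) is likewise a direct check of Hom-ring distributivity using the absorber. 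Your route instead invokes Corollary~\ref{Coro 4.16} to regard the structure as an $\alpha$-Hom-truss of type~$(1)$, applies the preceding theorem verbatim, and then recovers the type~$(2)$ identities by the substitution $x\mapsto\alpha(a)$, $y\mapsto\alpha(b)$, $z\mapsto\alpha(c)$ together with $\alpha(b+_{1}c)=\alpha(b)+_{1}\alpha(c)$ (which holds because $\alpha(1)=1$ and $\alpha$ preserves the ternary bracket). This is more economical and makes transparent that the type~$(2)$ statement carries no new content beyond the type~$(1)$ case once $\alpha=\beta$; the paper's direct argument, by contrast, is self-contained and does not rely on the equivalence corollary. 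The one mild gap in your part~(2) is that the Hom-ring analogue of Corollary~\ref{Coro 4.16} is not actually stated in the paper, so strictly speaking you are importing an external fact; the paper avoids this by checking the type~$(2)$ ring distributivity directly in two lines. Your identification of the $+_{1}$/$+_{0}$ typo in part~(2) is also correct and matches what the paper's proof actually establishes.
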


\begin{proof}
To prove assertion $(1)$, we use the Hom-heap axioms, the definition of $+_{1}$, the inverse element of the retract at $1$ in (\ref{hinv}), the Hom-distributivity of type $(2)$, and the unitary property. We obtain
\begin{align*}
 \alpha^{2}(a)\bullet\alpha(b+_{1}c) 
 &= \alpha^{2}(a)\bullet\langle \alpha(b),\alpha(1)=1,\alpha(c)\rangle \\[0.3em]
 &= \langle \alpha(a)\bullet\alpha(b),\,\alpha(a)\bullet 1,\,\alpha(a)\bullet\alpha(c)\rangle \\[0.3em]
 &= \langle \alpha(a)\bullet\alpha(b),\,\alpha^{2}(a),\,\alpha(a)\bullet\alpha(c)\rangle \\[0.3em]
 &= \Big\langle \langle \alpha^{-1}(\alpha(a)\bullet\alpha(b)),\,\alpha^{-1}(1),\,\alpha^{-1}(1)\rangle,\,\alpha^{2}(a),\,\alpha(a)\bullet\alpha(c)\Big\rangle \\[0.3em]
 &= \langle \alpha(a)\bullet\alpha(b),\,1,\,\langle 1,\alpha(a),\,\alpha^{-1}(\alpha(a)\bullet\alpha(c))\rangle \rangle 
      \qquad \text{(by Proposition \ref{Prop1})} \\[0.3em]
 &= \langle \alpha(a)\bullet\alpha(b),\,1,\,\langle 1,\alpha(a),\,\alpha^{-1}(\alpha(a\bullet c))\rangle \rangle \\[0.3em]
 &= \langle \alpha(a)\bullet\alpha(b),\,1,\,\langle 1,\alpha(a),\,a\bullet c \rangle \rangle \\[0.3em]
 &= \langle \alpha(a)\bullet\alpha(b),\,1,\,\langle 1,\alpha(a),\,\langle \alpha^{-1}(1),\alpha^{-1}(1),\alpha^{-1}(\alpha(a)\bullet\alpha(c))\rangle \rangle \rangle \\[0.3em]
 &= \langle \alpha(a)\bullet\alpha(b),\,1,\,\langle \langle 1,\alpha^{-1}(\alpha(a)),1\rangle,1,\,\alpha(a)\bullet\alpha(c)\rangle \rangle \\[0.3em]
 &= (\alpha(a)\bullet\alpha(b)) \;+\;\bigl(-_{1}\alpha(a)+_{1}(\alpha(a)\bullet\alpha(c))\bigr).
\end{align*}
The right Hom-brace distributive law can be proved in a similar manner.

\smallskip

To prove assertion $(2)$, let $a,b,c \in T$. By the definition of $+_{0}$, the type $(2)$ $\alpha$-Hom-truss distributivity, and the property $a\bullet 0=0$, we obtain
\begin{align*}
 \alpha(a)\bullet\alpha(b+_{0}c) 
 &= \alpha(a)\bullet\langle \alpha(b),\alpha(0)=0,\alpha(c)\rangle \\[0.3em]
 &= \langle \alpha(a)\bullet\alpha(b),\,\alpha(a)\bullet 0,\,\alpha(a)\bullet\alpha(c)\rangle \\[0.3em]
 &= \langle \alpha(a)\bullet\alpha(b),\,0,\,\alpha(a)\bullet\alpha(c)\rangle \\[0.3em]
 &= (\alpha(a)\bullet\alpha(b)) +_{0} (\alpha(a)\bullet\alpha(c)).
\end{align*}
Similarly, the right distributive law can be proved.
\end{proof}

\begin{Coro}
Every idempotent $\alpha$-Hom-truss of type $(2)$, 
$(T,\langle -,-,- \rangle,\bullet,\alpha,\alpha)$, 
in which $(T,\bullet,\alpha)$ forms a Hom-group, gives rise to a $\alpha$-Hom-brace of type $(2)$, namely 
$(T,+_{1},\bullet,\alpha,\alpha)$. Conversely, any idempotent $\alpha$-Hom-brace of type $(2)$, 
$(T,+_{1},\bullet,\alpha,\alpha)$, gives rise to a unital $\alpha$-Hom-truss of type $(2)$.
\end{Coro}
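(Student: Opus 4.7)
The plan is to derive the corollary almost entirely as a bookkeeping consequence of the preceding theorem on idempotent $\alpha$-Hom-trusses of type $(2)$, together with Theorem~\ref{hinv} (retract of a Hom-heap at a fixed point of $\alpha$) and the preceding lemma describing the ternary operation induced by a Hom-brace. No new computation should be necessary beyond what has already been displayed; the task is only to reorganize the pieces into the Hom-brace axioms.

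For the forward direction, I would start from an idempotent unital $\alpha$-Hom-truss of type $(2)$, $(T,\langle-,-,-\rangle,\bullet,\alpha,\alpha)$, in which $(T,\bullet,\alpha)$ is a Hom-group. Since $\alpha(1)=1$, Theorem~\ref{hinv} applied to the underlying abelian Hom-heap $(T,\langle-,-,-\rangle,\alpha)$ produces an abelian Hom-group $(T,+_1,\alpha)$; this establishes conditions (i) and (iii) of the definition of a Hom-brace of type $(2)$. Condition (ii), i.e.\ that $(T,\bullet,\alpha)$ is a Hom-group satisfying the type-$(2)$ Hom-associativity \eqref{eq6}, is part of the hypothesis and of the Hom-truss axioms. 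The multiplicativity in (iv) is again immediate from the Hom-truss structure, and the two Hom-brace distributive laws in (v) are exactly the identities supplied by the preceding theorem.

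For the converse, I would begin with an idempotent $\alpha$-Hom-brace of type $(2)$ $(T,+_1,\bullet,\alpha,\alpha)$, and define the ternary operation as in the preceding lemma:
\[
\langle a,b,c\rangle \;=\; a +\bigl(-\alpha^{-1}(b)+\alpha^{-1}(c)\bigr).
\]
The verification that $(T,\langle-,-,-\rangle,\alpha)$ is an abelian Hom-heap proceeds directly from the abelian Hom-group axioms of $(T,+_1,\alpha)$ by the usual heap-from-group recipe, suitably twisted by $\alpha^{-1}$. The Hom-associativity of $\bullet$ with $\alpha$ is hypothesized, $\alpha$ is already assumed multiplicative, and $\alpha$ commutes with itself trivially. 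The left Hom-distributive law \eqref{eq7} is exactly assertion $(3)$ of the preceding lemma; the right-distributive law \eqref{eq8} is proved symmetrically using the right Hom-brace distributivity. Unitality follows from the fact that $1$ is the distinguished identity of $(T,\bullet,\alpha)$ with $\alpha(1)=1$.

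The main obstacle, as in the preceding theorem, is the careful management of the twists by $\alpha^{\pm 1}$ in the converse direction: one must repeatedly invoke the idempotent property together with the Mal'cev identity and Proposition~\ref{Prop1} in order to shuffle $\alpha^{-1}$'s across the ternary brackets when translating the brace distributive law into the truss distributive law. Once this is done in full for one side, the other side is obtained by an entirely symmetric argument, and the corollary follows.
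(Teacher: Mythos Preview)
Your approach is correct and mirrors the paper's intended reasoning: the paper gives no explicit proof for this corollary, treating it as an immediate consequence of the preceding theorem (for the forward direction) and of the brace-to-truss lemma pattern (for the converse), exactly as you outline.

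One small point of precision: the ``preceding lemma'' you invoke is the one for idempotent Hom-braces of type~$(1)$, and its assertion~(3) yields the type-$(1)$ distributive law $\alpha(a)\bullet\langle b,c,d\rangle=\langle a\bullet b,a\bullet c,a\bullet d\rangle$, not \eqref{eq7} directly. This is harmless, since Corollary~\ref{Coro 4.16} and Corollary~\ref{Coro5.9} identify $\alpha$-Hom-trusses and $\alpha$-Hom-braces of types~$(1)$ and~$(2)$; you should cite those equivalences explicitly rather than asserting that assertion~(3) ``is exactly'' \eqref{eq7}. With that adjustment, the argument goes through without further computation.
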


\begin{center}
{\bf Future Research Directions}
\end{center}
This paper serves as a foundational contribution to our ongoing research on 
the exploration of \emph{Hom-affgebras}, which may be regarded as the affine 
counterpart of Hom-algebras. In particular, we are in the process of developing 
the theory of \emph{Hom-Lie affgebras}, aiming to extend the notions of 
Lie affgebras into the Hom-setting. Alongside this, our investigation also focuses 
on the representations of \emph{Hom-trusses} as well as \emph{Hom-affgebras}, 
seeking to understand their structural and categorical aspects.

In recent years, Hom-algebraic structures have been studied extensively by many 
mathematicians, motivated by their wide range of applications in deformation theory, 
quantum groups, and non-associative geometry. Inspired by this line of research, 
our goal is to initiate a parallel development of these ideas in the 
\emph{affine world}, making systematic use of the algebraic frameworks provided by 
heaps and trusses. Through this study, we aim not only to build the foundations 
of Hom-affine structures but also to explore the similarities and differences 
that arise when compared with their classical counterparts. This comparative 
approach is expected to shed light on the underlying algebraic phenomena and 
reveal new directions for further research.

\begin{center}
 {\bf ACKNOWLEDGEMENT}
 \end{center}
 
This research is sponsored and supported by the Core Research Grant (CRG) of the Anusandhan National Research Foundation (ANRF), formerly the Science and Engineering Research Board (SERB), under the Department of Science and Technology (DST), Government of India (Grant Number: CRG/2022/005332). All authors gratefully acknowledge the project grant received from the aforementioned agency. Part of this work was carried out when the third author was an ANRF project JRF at Raiganj University. The third author gratefully acknowledges the funding support of ANRF and the facilities provided by Raiganj University.

\section{Declarations}

 {\bf Ethical Approval}

 Not Applicable\\

 {\bf Conflict of interest}
 
 The authors declare that they have no conflict of interest.\\

 {\bf Authors' contributions}
 
 All authors contributed equally. \\

 {\bf Funding}

This research is sponsored and supported by the Core Research Grant (CRG) of the Anusandhan National Research Foundation (ANRF), formerly the Science and Engineering Research Board (SERB), under the Department of Science and Technology (DST), Government of India (Grant Number: CRG/2022/005332).\\

 {\bf Availability of data and materials}
 
 No datasets were used or generated during the preparation of the paper.\\

 \end{document}